\documentclass[a4paper,12pt,reqno]{amsart}
\usepackage{amsmath,amssymb,amsthm, color}
\numberwithin{equation}{section}
\newtheorem{theorem}{Theorem}[section]

\newtheorem{proposition}[theorem]{Proposition}
\newtheorem{corollary}[theorem]{Corollary}
\newtheorem{lemma}[theorem]{Lemma}
\newtheorem*{theoremA}{Theorem A}
\newtheorem*{theoremB}{Theorem B}
\theoremstyle{definition}
\newtheorem*{remark}{Remark}

\newcommand{\vep}{\varepsilon}
\newcommand{\R}{{\bf R}}

\newcommand{\dsp}{\displaystyle}
\newcommand{\lam}{\lambda}
\begin{document}

\title
{Infinite time blow-up of solutions to the classical 
Keller-Segel model of chemotaxis in higher dimensions}

\author{Y\={u}ki Naito}
\address[Y\={u}ki Naito]{
Department of Mathematics,  Hiroshima University \\
Higashi-Hiroshima, 739-8526, Japan}
\email{yunaito@hiroshima-u.ac.jp}

\author{Takasi Senba}
\address[Takasi Senba]{Faculty of Engineering, Kanagawa University \\
3-27-1 Rokkakubashi, Kanagawa-ku, Yokohama 221-8686, Japan}
\email{tsenba3382@kanagawa-u.ac.jp}

\date{\today}

\begin{abstract}
We study the simplest parabolic-elliptic model of chemotaxis in $\R^N$, and   
show the optimal criteria for the existence of infinite time blow-up of solutions with 
the initial data below the Chandrasekhar singular solution when $N \geq 10$. 
In particular, we exhibit the differences of the criteria between the cases $N \geq 11$ and $N = 10$. 
Our argument is based on the study of the Cauchy problem 
for the transformed equation involving the averaged mass of the solution and 
the asymptotic expansions of solutions to the Liouville equation.
\end{abstract}

\keywords{Chemotaxis, Infinite time blow-up, 
The Liouville equation, Asymptotic expansion}

\subjclass[2020]{35K55, 35B44, 35A24}

\maketitle


\section{Introduction}

We consider the following Cauchy problem for the parabolic-elliptic system 
in space dimensions $N \geq 3$:
\begin{equation}
	\left\{
	\begin{array}{ll}
	u_t = \nabla\cdot (\nabla u - u\nabla v), 
	\quad & x \in \R^N, \ t > 0,
	\\[1ex]
	0 = \Delta v + u, 
	\quad & x \in \R^N, \ t > 0,
	\\[1ex]
	u(x, 0) = u_0(x) \geq 0, \quad &x \in \R^N.
	\end{array}
	\right.
	\label{eq1.1}
\end{equation}
This system arises as a simplified model of chemotaxis processes, 
where $u$ and $v$ respectively denote the density of the bacterial population 
and the concentration of the secreted chemoattractant. 
The system (\ref{eq1.1}) is also known as a model describing gravitational interactions of particles. 
(See, e.g., \cite{BBTW, Bilc, Cha, KeSe}. ) 

The dynamics of the system and its analysis strongly depend on the space dimension $N$. 
When $N = 2$, it is well known that an 
interesting $8\pi$ critical mass phenomenon has been well studied for the global and 
blow-up solutions (see, e.g. \cite{BeMa, BKLNa, BKLNb, BCM, BDP}). 
For the case $N \geq 3$, the system (\ref{eq1.1}) admits both globally bounded solutions and blow up 
solutions, but the critical mass does not exist
(see, e.g., \cite{Bila, BKPa, BiZi, CCE, CPZ, GMS, OgWa, Naib, SoWi, Winb}). 

When $N \geq 3$ the system (\ref{eq1.1}) has the singular steady state explicitly given by  
\begin{equation}
	U_{\infty}(|x|) = \frac{2N-4}{|x|^2},
	\label{eq1.2}
\end{equation}
which is known as the Chandrasekhar solution \cite{Cha}.
It was shown by Biler, Karch and Pilarczyk \cite{BKPa} 
that if radial nonnegative initial function $u_0(|x|)$ in 
$M^{N/2}(\R^N) \cap M^p(\R^N)$ with $p \in (N/2, N)$ satisfies 
\begin{equation}
	\sup_{R > 0}R^{2-N}\int^R_0 r^{N-1}u_0(r)dr < 2, 
	\label{eq1.3}
\end{equation}
then the corresponding solution $u$ of (\ref{eq1.1}) is global and bounded, where 
$M^p(\R^N)$ is the homogeneous Morrey space (see also \cite{Bilc}). 
Since $U_{\infty}(r)$, defined by (\ref{eq1.2}), satisfies  
$$
	R^{2-N}\int^R_0 r^{N-1}U_{\infty}(r)dr = 2 \quad \mbox{for each } \ R > 0,
$$
the assumption (\ref{eq1.3}) means that the initial datum $u_0$ is strictly below $U_{\infty}$ 
in the sense of radial concentrations.
Analogue results are also obtained for the problem in bounded domains by Winkler \cite{Wina}. 
Furthermore, in the case $N \geq 10$, it was shown by \cite[Theorem 1.4 (i)]{Naib} that, 
for each $\ell > 2$, there exists a radial nonnegative initial function $u_0$ such that 
$$
	\sup_{R > 0}R^{2-N}\int^R_0 r^{N-1}u_0(r)dr = \ell
$$
and the corresponding solution $u$ of (\ref{eq1.1}) blows up in finite time.
Thus the assumption (\ref{eq1.3}) is optimal for the existence of global solutions when $N \geq 10$. 
On the other hand, when $3 \leq N \leq 9$,  
there exists a radial nonnegative initial function $u_0$ such that 
$$
	\sup_{R > 0}R^{2-N}\int^R_0 r^{N-1}u_0(r)dr > 2
$$
and the corresponding solution $u$ of (\ref{eq1.1}) is global and bounded 
(see \cite[Theorem 1.4 (ii)]{Naib}).   
It was shown that the decay behavior of $U_{\infty}$ is critical with 
respect to the occurrence of finite time blow-up (see \cite{Winb}).  

In the case $u_0(x) = c|x|^{-2}$ with a constant $c > 0$, 
the system (\ref{eq1.1}) is invariant under the similarity transformation 
$$
	u_{\lam}(x, t) = \lam^2 u(\lam x, \lam^2 t) \quad \mbox{and} \quad 
	v_{\lam}(x, t) = v(\lam x, \lam^2 t) \quad \mbox{for} \ \lam > 0.
$$
We say that a solution $(u, v)$ is self-similar if $(u, v)$ satisfies 
$u_{\lam}(x, t) = u(x, t)$ and $v_{\lam}(x, t) = v(x, t)$ for all 
$x \in \R^N$, $t > 0$ and $\lam > 0$. 
It is worth mentioning that Biler, Karch and Wakui \cite{BKW} constructed 
radial self-similar solutions with initial data  
$u_0(x) = \vep U_{\infty}(|x|)$ for $0 < \vep < 1$. 

In this paper, we always assume in (\ref{eq1.1}) that 
$u_0$ is radially symmetric, i.e., $u_0 = u_0(|x|)$ with $x \in \R^N$, 
and  $u_0(r)$ satisfies 
\begin{equation}
	u_0 \in C[0, \infty) \quad \mbox{and} \quad 
	0 < u_0(r) < U_{\infty}(r) \quad \mbox{for all} \ r > 0.
	\label{eq1.4}
\end{equation}
By \cite[Appendix A]{Naib} (see also \cite[Proposition 1.1]{Winb}), 
if $u_0(|x|)$ belongs to $C(\R^N)\cap L^{\infty}(\R^N)$ and is nonnegative, 
then there exists $T_0 = T_0(\|u_0\|_{L^{\infty}[0, \infty)}) > 0$ such that 
(\ref{eq1.1}) has a unique positive radial solution 
$$
	u \in C^{2, 1}(\R^N\times (0, T_0]) \cap C(\R^N\times [0, T_0]) 
	\cap L^{\infty}(0, T_0; L^{\infty}(\R^N)).
$$
Furthermore, it was shown by Winkler \cite{Winb} 
(see also \cite{Sena}) that the solution exists globally if 
$u_0$ satisfies (\ref{eq1.4}). 

\begin{theoremA}
{\rm (\cite[Theorem 1.2]{Winb})} 
Assume that $u_0$ is radially symmetric and satisfies $(\ref{eq1.4})$.
Then the solution $(u, v)$ of $(1.1)$ is global and satisfies
\begin{equation}
	\frac{1}{|B_r(0)|}\int_{B_r(0)}u(x, t)dx \leq \frac{2N}{r^2} 
	\quad \mbox{for all $r > 0$ and $t > 0$}.
	\label{eq1.5}
\end{equation}
\end{theoremA}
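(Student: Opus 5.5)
We pass to the averaged-mass formulation. Because $u$ is radial, we set
$$
	w(r,t) = \frac{1}{|B_r(0)|}\int_{B_r(0)}u(x,t)\,dx = \frac{N}{r^N}\int_0^r s^{N-1}u(s,t)\,ds .
$$
Then $u = w + \frac{r}{N}w_r$, and $-r^{N-1}v_r = \int_0^r s^{N-1}u\,ds = \frac{r^N}{N}w$. Integrating the first equation of (\ref{eq1.1}) over $B_r(0)$ gives the scalar equation
$$
	w_t = w_{rr} + \frac{N+1}{r}w_r + \frac{r}{N}\,w\,w_r + w^2 ,
$$
which is a semilinear parabolic equation in dimension $N+2$ for the radial function $w$. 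The Chandrasekhar solution $U_\infty$ corresponds to $W_\infty(r)=2N/r^2$, and this is an exact stationary solution of the $w$-equation. The assumption (\ref{eq1.4}) gives $u_0<U_\infty$ pointwise. Integrating, $w_0(r) = \frac{N}{r^N}\int_0^r s^{N-1}u_0\,ds < \frac{N}{r^N}\int_0^r s^{N-3}(2N-4)\,ds = \frac{2N}{r^2}$ for all $r>0$. So the claim (\ref{eq1.5}) reduces to showing $w(\cdot,t)\le W_\infty$ for all $t$, that is, to a comparison principle for the $w$-equation.

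\textbf{Step 1: comparison with $W_\infty$.} Set $z=W_\infty-w$. Subtracting the equations, $z$ satisfies a linear equation
$$
	z_t = z_{rr} + \frac{N+1}{r}z_r + \frac{r}{N}w\,z_r + \Big(\frac{r}{N}W_\infty' + W_\infty + w\Big)z ,
$$
where we used $W_\infty w_r - w W_\infty' $-type rearrangements, i.e.\ $\frac rN(W_\infty W_\infty' - w w_r) = \frac rN w z_r + \frac rN W_\infty' z$ and $W_\infty^2-w^2=(W_\infty+w)z$. The zeroth-order coefficient is $c=\frac{r}{N}W_\infty'+W_\infty+w = w+\frac{2N-4}{r^2}\cdot\frac{N}{N}$, which is bounded from above only away from $r=0$. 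Near the origin, however, $z\ge W_\infty-\|w\|_\infty\to+\infty$, so positivity of $z$ is automatic on a small ball as long as $w$ stays bounded. On the local existence interval $[0,T_0]$, $u$ and hence $w$ are bounded. We therefore work on $r\in[\delta,\infty)$ with $z>0$ on $r=\delta$. At infinity, $w\to 0$ like $r^{-N}\|u\|_{L^1}$, or at least $w\le \|u\|_\infty$, so $z$ is bounded below by a function tending to $0$. A standard maximum principle with the weight $e^{-\lambda t}(1+r^2)$ to handle unbounded domains and the bounded coefficient $c$ on $[\delta,\infty)$ then yields $z\ge0$.

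\textbf{Step 2: strict inequality and globality.} The strong maximum principle gives $z>0$, so $w<W_\infty$ on the whole existence interval. To continue the solution we need an $L^\infty$ bound on $u$ that depends only on a priori information, and this is the main obstacle: the bound $w\le 2N/r^2$ does not control $u$ near $r=0$. Our plan is to first show that $w(0,t)$ cannot blow up while $w<W_\infty$. For this we use the strict subcriticality near the origin: $r^2w< 2N$ means the local mass in $B_r$ is below the Chandrasekhar value. We would combine this with a comparison against a family of bounded stationary solutions of the $w$-equation, namely the regular solutions $W_\alpha(r)=\alpha W_1(\sqrt\alpha r)$ of the associated Liouville-type ODE, $W_\alpha(0)=\alpha$. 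For $N\ge 3$ these are all global and, at least for $N\ge10$, ordered below $W_\infty$, converging to $W_\infty$ as $\alpha\to\infty$. If $w_0\le W_\alpha$ for some $\alpha$, the comparison argument of Step 1 applied to $W_\alpha-w$, which now has a bounded coefficient, gives $w\le W_\alpha\le\alpha$. For $3\le N\le9$, where the $W_\alpha$ intersect $W_\infty$, and for general data, we instead use a local $L^p$ energy estimate. Testing the $u$-equation with $u^{p-1}$ and using $\int u^{p}\,w$-type bounds, the bound $w\le 2N/r^2$ together with the Hardy inequality with constant $(N-2)^2/4$ shows that the nonlinear term is dominated on small balls, as in Winkler \cite{Winb}. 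Moser iteration then gives $\|u(t)\|_\infty\le C(T)$ on every finite interval.

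Finally, with this bound we iterate the local existence result of \cite[Appendix A]{Naib} to extend the solution for all $t>0$. Step 1 then gives (\ref{eq1.5}) for every $t>0$.
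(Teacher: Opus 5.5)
The paper does not prove Theorem A; it quotes it from \cite{Winb}. So I can only judge your argument on its own terms.

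Step 1 is sound. On any interval where $u$ is bounded, $z=W_\infty-w$ is automatically positive near $r=0$. On $\{r\ge\delta\}$ its equation has a bounded zeroth-order coefficient and at most linearly growing drift, so the weighted maximum principle gives $w\le 2N/r^2$. Drop the remark $w\sim r^{-N}\|u\|_{L^1}$: under (\ref{eq1.4}) $u$ need not be integrable, and $w\le\|u\|_{L^\infty}$ is all you use.

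The gap is in Step 2. This step is the real content of the theorem, namely excluding finite-time blow-up at the origin, and neither of your two mechanisms achieves it.

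\emph{Global comparison.} The condition $w_0\le W_\alpha$ on all of $[0,\infty)$ is a condition on the tail of $u_0$, and (\ref{eq1.4}) does not provide it. For $N\ge10$, Theorems \ref{thm1.2}(i) and \ref{thm1.3}(i) of this paper give data satisfying (\ref{eq1.4}) with $\|u(\cdot,t)\|_{L^\infty}\to\infty$. For such data no admissible $\alpha$ exists, since otherwise Corollary \ref{cor2.5} and Proposition \ref{prp2.7} would give boundedness.

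\emph{Hardy/$L^p$ estimate.} This route does not close. The bound $w\le 2N/r^2$ is equivalent to $|\nabla v|\le 2/|x|$, and Cauchy--Schwarz plus Hardy give
\[
p(p-1)\int u^{p-1}\nabla u\cdot\nabla v=2(p-1)\int u^{p/2}\nabla u^{p/2}\cdot\nabla v\le\frac{8(p-1)}{N-2}\int|\nabla u^{p/2}|^2 .
\]
The dissipation $\frac{4(p-1)}{p}\int|\nabla u^{p/2}|^2$ absorbs this only when $p<\frac{N-2}{2}$. That never covers the exponents $p>\frac N2$ needed to start a Moser iteration, and for $N\le4$ it covers no $p>1$ at all. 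Moreover, $w\le 2N/r^2$ is invariant under $u\mapsto\lambda^2u(\lambda\,\cdot)$, so passing to small balls gains nothing. Smallness near the origin would have to come from $\|u(\cdot,t)\|_{L^\infty}$, which is exactly the quantity not controlled up to the maximal time.

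What does work is to \emph{localize} your comparison with $W_\alpha$. This also makes the ordering of the steady states irrelevant, so it covers $3\le N\le9$ as well.
\begin{itemize}
\item[(a)] \emph{Margin on a sphere.} Assume $T_{\max}<\infty$ and fix $R>0$. On an annulus around $r=R$, $z$ solves $z_t=z_{rr}+bz_r+cz$ with $b,c$ bounded up to $T_{\max}$ (because $0\le w\le 2N/r^2$ there) and $c\ge0$. Comparing with an explicit subsolution $\vep e^{-\mu t}\psi(r)$, where $\psi$ is a bump supported on the annulus, gives $W_\infty(R)-w(R,t)\ge\vep_1>0$ for all $t<T_{\max}$.
\item[(b)] \emph{Choice of $\alpha$.} By the scaling $W_\alpha(r)=\alpha W_1(\sqrt{\alpha}\,r)$ and $r^2W_1(r)\to 2N$ (a consequence of (\ref{eq3.3})), $W_\alpha\to W_\infty$ uniformly on compact subsets of $(0,\infty)$. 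Also $W_\alpha$ is nonincreasing. Using monotonicity on $[0,r_0]$ with $r_0$ small, and the strict inequality $w_0<W_\infty$ on $[r_0,R]$, you get for large $\alpha$ that $w_0\le W_\alpha$ on $[0,R]$. Step (a) gives $w(R,t)\le W_\alpha(R)$ for all $t<T_{\max}$.
\item[(c)] \emph{Conclusion.} The comparison principle on the bounded cylinders $B_R\times(0,T)\subset\R^{N+2}\times(0,T)$, $T<T_{\max}$, gives $w\le W_\alpha\le\alpha$ there. Hence $w$ stays bounded up to $T_{\max}$. The local parabolic estimates from the proof of Proposition \ref{prp2.7} then bound $u$, which contradicts the blow-up alternative of the local existence theory.
\end{itemize}
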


The condition (\ref{eq1.5}) does not guarantee 
whether the blow-up in infinite time will occur or not.
In \cite{Sena} one of the author showed the existence of infinite time blow-up solutions 
when $N \geq 11$. 
Furthermore, various behavior of global solutions, as well as global unbounded solutions, 
were also constructed in \cite{Senb}. 
Recently, Colasuonno and Winkler \cite{CoWi} investigated 
the stability and attractiveness of the singular steady state $U_{\infty}$ for all dimensions $N \geq 3$, 
and showed the existence of infinite time blow-up solutions to (\ref{eq1.1}) when $N \geq 10$.  
For $N \geq 10$, define 
\begin{equation}
	\lam_0 = \frac{N-2-\sqrt{(N-2)(N-10)}}{2}.
	\label{eq1.6}
\end{equation}

\begin{theoremB}
{\rm (\cite[Theorems 1.2 and 1.3]{CoWi})} 
Assume that $u_0$ is radially symmetric and satisfies $(\ref{eq1.4})$.
\begin{itemize}
\item[{\rm (i)}] 
In the case $3 \leq N \leq 9$, there exists a constant $C > 0$ such that 
the solution $(u, v)$ of $(1.1)$ satisfies $\|u(\cdot, t)\|_{L^{\infty}(\R^N)} \leq C$ 
for all $t \geq t_*$ 
with some $t_* = t_*(u_0) > 0$. 

\item[{\rm (ii)}] 
In the case $N \geq 10$, assume in addition that $u_0$ satisfies   
$$
	u_0(r) \geq \frac{2N-4}{r^2} -\frac{C}{r^{2+\theta}} 
	\quad \mbox{for all} \ r \geq 1
$$
with some constants $C > 0$ and $\theta > \lam_0$, 
where $\lam_0$ is defined by $(\ref{eq1.6})$. 
Then the solution $(u, v)$ of $(\ref{eq1.1})$ blows up
infinite time in the sense that 
$\|u(\cdot, t)\|_{L^{\infty}(\R^N)} \to \infty$ as $t \to \infty$. 
Moreover, $u(\cdot, t) \to U_{\infty}(|\cdot|)$ in $C_{\rm loc}(\R^N\setminus\{0\})$
as $t \to \infty$.
\end{itemize}
\end{theoremB}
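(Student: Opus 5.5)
The approach is to pass from $u$ to the averaged mass
$$
w(r,t)=\frac{1}{r^{N}}\int_0^r s^{N-1}u(s,t)\,ds,
$$
which satisfies a scalar parabolic equation
$$
w_t=w_{rr}+\frac{N+1}{r}w_r+rww_r+Nw^2 .
$$
This equation enjoys a comparison principle. Under (\ref{eq1.4}) and Theorem A we have $0<w<W_\infty:=\frac{2}{r^2}$, where $W_\infty$ is the averaged mass of $U_\infty$. The regular steady states $w_\alpha$ with $w_\alpha(0)=\alpha$ come from the Liouville equation $\Delta\phi+e^{\phi}=0$, and they satisfy the scaling $w_\alpha(r)=\alpha\, w_1(\sqrt{\alpha}\,r)$. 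The classical phase-plane analysis of the Liouville equation (Joseph--Lundgren type) gives the dichotomy in $N$. For $3\le N\le 9$, each $w_\alpha-W_\infty$ changes sign infinitely often. For $N\ge10$, the $w_\alpha$ are strictly increasing in $\alpha$, lie below $W_\infty$, and converge to $W_\infty$ locally uniformly on $(0,\infty)$ as $\alpha\to\infty$. Moreover, linearizing at $W_\infty$ gives the exponents $-2-\lambda_0$ and $-2-\lambda_1$ with $\lambda_0\le\lambda_1$ the roots of $\lambda^2-(N-2)\lambda+2(N-2)=0$. Hence $W_\infty-w_\alpha\sim c\,\alpha^{-\lambda_0/2}r^{-2-\lambda_0}$ as $r\to\infty$.

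For (i), I would argue by contradiction with a rescaling argument. Suppose $\sup_x u(x,t)\to\infty$ along some sequence $t_k\to\infty$. Then $M_k:=w(0,t_k)\to\infty$ as well, and I rescale
$$
w_k(r,s)=M_k^{-1}w\bigl(M_k^{-1/2}r,\ t_k+M_k^{-1}s\bigr).
$$
These are solutions of the same equation, bounded above by $W_\infty$ (which is scale invariant), with $w_k(0,0)=1$. Parabolic estimates together with the bound by $W_\infty$ away from $r=0$ give a limit. I would first have to rule out concentration at $r=0$ in the limit; choosing $t_k$ as near-maximizing times of $w(0,\cdot)$ should give a limit $\bar w$ that is an entire solution with $\bar w\le 1$ near the origin. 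Using a Lyapunov functional for the radial system, or an intersection-number argument against $W_\infty$, $\bar w$ must be a steady state, hence $\bar w=w_1$. But for $N\le9$ the profile $w_1$ crosses $W_\infty$, contradicting $\bar w\le W_\infty$. This gives a uniform bound for $t\ge t_*$.

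For (ii), set $A(t)=\sup\{\alpha>0:\ w(\cdot,t)\ge w_\alpha\}$. First I show $A(0)>0$. On $[0,1]$ the function $w_0$ is positive and $w_\alpha\le\alpha$. For large $r$, the hypothesis gives $W_\infty-w_0\le C'r^{-2-\theta}$, which is at most $c\,\alpha^{-\lambda_0/2}r^{-2-\lambda_0}$ once $r\ge R(\alpha)$, because $\theta>\lambda_0$. On the remaining compact range $1\le r\le R(\alpha)$, $w_0$ is bounded below and $w_\alpha$ is small for small $\alpha$; I would balance the choice of $\alpha$ against $R(\alpha)$ here. By comparison, $A(t)$ is nondecreasing. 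Next I show $A(t)\to\infty$. Suppose instead $A(t)\to A^*<\infty$. The strong maximum principle gives $w(\cdot,t)>w_{A^*}$ strictly. Compactness of time translates then forces convergence to a steady state $\ge w_{A^*}$. Ordering of the $w_\alpha$ shows it equals $w_{A^*}$. This would contradict strict monotone improvement, provided the tail gap $W_\infty-w(\cdot,t)$ stays $O(r^{-2-\theta})$ uniformly, so that one can again slide past $w_{A^*+\delta}$ at infinity.

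The main obstacle is exactly this uniform-in-time preservation of the tail estimate $W_\infty-w\le C r^{-2-\theta}$. I would attack it by building an explicit supersolution for $z=W_\infty-w$ from the linearized operator $z_t=z_{rr}+\frac{N+1}{r}z_r+\frac{2}{r}z_r+\frac{4N-4}{r^2}z$ (up to exact coefficients), plus the nonlinear term, which has a good sign. Since $\theta>\lambda_0$ lies in the region where $r^{-2-\theta}$ is a strict supersolution at large $r$, this should work for $\theta\in(\lambda_0,\lambda_1)$; the case $\theta\ge\lambda_1$ reduces to it because the hypothesis only gets weaker if $\theta$ is decreased. Finally, $w(0,t)\ge A(t)\to\infty$ gives infinite-time blow-up. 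The bounds $w_{A(t)}\le w\le W_\infty$ give $w\to W_\infty$ locally uniformly away from $0$, and parabolic regularity upgrades this to $u=Nw+rw_r\to U_\infty$ in $C_{\rm loc}(\R^N\setminus\{0\})$.
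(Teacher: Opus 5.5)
The paper does not prove Theorem B itself; it is quoted from Colasuonno--Winkler. What the paper does reprove is the blow-up assertion in (ii), as a special case of Proposition \ref{prp5.1}, since $\theta>\lambda_0$ implies (\ref{eq1.12}) and (\ref{eq1.14}).

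\textbf{The central gap.} Both of your parts rest on the claim that a limit of rescaled or time-translated trajectories is a steady state, and nothing in your proposal supports it.
\begin{itemize}
\item The free energy $\int u\log u-\frac12\int uv$ is not finite for data with $(2N-4)r^{-2}$ tails, so there is no Lyapunov functional to invoke.
\item The zero number of $w(\cdot,t)-W_\infty$ is identically zero, because $w<W_\infty$ throughout. An intersection argument against $W_\infty$ therefore carries no information.
\item In (i), near-maximizing times give at best an ancient solution on $(-\infty,0]$. Nothing excludes a non-stationary one, so the contradiction with the crossing of $w_1$ and $W_\infty$ never materializes.
\item In (ii), the trajectory $w$ has no monotonicity in $t$. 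Also, $A^*<\infty$ gives no bound on $w$, which compactness needs, and nothing forces a limit to be $w_{A^*}$ rather than some $w_\beta$ with $\beta>A^*$.
\end{itemize}

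\textbf{How the paper avoids it.} The paper places a stationary subsolution $\underline{w}_0$ below the monotonized data. The flow from $\underline{w}_0$ is then nondecreasing in $t$ (Proposition \ref{prp2.2}). So it is either unbounded, or it converges to a regular steady state (Propositions \ref{prp2.3}, \ref{prp2.4}), and in the latter case the tail comparison gives $W(0)\ge\alpha/N$ for every large $\alpha$.

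\textbf{How your tail computation fits this scheme.} Take $N\ge 11$ and $\theta\in(\lambda_0,\lambda_1)$. For $K$ large, $\max\{W_\infty-Kr^{-2-\theta},0\}$ is itself a stationary subsolution lying below $w_0$. The nondecreasing flow from it cannot converge to any $w_\beta$, because $W_\infty-w_\beta$ is not $O(r^{-2-\theta})$. Hence that flow is unbounded, and $\|u(\cdot,t)\|_\infty\ge N\sup_r w(r,t)\to\infty$. The missing idea is to run a monotone flow from a subsolution, rather than to preserve the tail along $w$.

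Compared with the paper, this power-law subsolution is simpler but needs $\theta>\lambda_0$ strictly. The paper's $\phi_\alpha-C_0\psi$ construction, combined with the strict monotonicity of $\ell_N$, works under the sharp hypotheses (\ref{eq1.12}) and (\ref{eq1.14}).

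For the $C_{\rm loc}(\R^N\setminus\{0\})$ convergence, which the paper does not treat, you must still identify the unbounded limit as $W_\infty$. One way is the phase-plane fact that a radial steady state with $0\le V\le W_\infty$ is either some $w_\beta$ or $W_\infty$.

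\textbf{Positivity of the tail coefficient.} The constant $c>0$ in $W_\infty-w_\alpha\sim c\,\alpha^{-\lambda_0/2}r^{-2-\lambda_0}$ does not follow from linearization. Linearization only says the gap is asymptotically a combination of the $r^{-2-\lambda_0}$ and $r^{-2-\lambda_1}$ modes. If the leading coefficient vanished, a steady state would be compatible with your tail bound, and (ii) would collapse. Proving $\ell_N(\alpha)>0$ is the real content of Proposition \ref{prp3.1}, obtained via Lemma \ref{lem6.6} and the Sturm comparison of Proposition \ref{prp6.3}.

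\textbf{The case $N=10$.} Here $\lambda_0=\lambda_1=4$, so $(\lambda_0,\lambda_1)$ is empty and your reduction has nothing to reduce to. The linear part applied to $r^{-2-\theta}$ gives $(\theta-4)^2r^{-4-\theta}>0$, so this is not a supersolution for any $\theta>4$. You must take $\theta=\lambda_0$, where the linear part vanishes and the quadratic term has the good sign. You then need the logarithm in $W_\infty-w_\alpha\sim c\,\alpha^{-2}r^{-6}\log r$, which your expansion omits, to beat $Kr^{-6}$.

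\textbf{Part (i) needs no rescaling.} The crossing you quote gives a direct comparison.
\begin{enumerate}
\item Let $r_1(\alpha)=r_1(1)/\sqrt{\alpha}$ be the first point where $w_\alpha=W_\infty$.
\item Compare $w$ with $w_\alpha$ on the ball $\{r<r_1(\alpha)\}$. On its boundary, Theorem A gives $w\le W_\infty(r_1)=w_\alpha(r_1)$.
\item Initially $w_0\le w_\alpha$ on that ball once $\alpha\ge r_1(1)^2\|w_0\|_\infty/2$.
\item Outside the ball, $w\le W_\infty(r_1)\le\alpha$. Together these give $w\le\alpha$ for all $t\ge 0$, and Proposition \ref{prp2.7} then bounds $u$.
\end{enumerate}
This constant depends on $u_0$. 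If a $u_0$-independent $C$ is intended, as the form $t_*=t_*(u_0)$ suggests, an additional argument is required.

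Also, for non-monotone data, unboundedness of $u$ does not force $w(0,t_k)\to\infty$; work with $\sup_r w$ instead.
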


In this paper we consider the optimal criteria for the existence of 
infinite time blow-up solutions when $N \geq 10$.
We first consider the asymptotic expansion of radial solutions to 
the stationary problem
\begin{equation}
	\left\{
	\begin{array}{ll}
	0 = \nabla\cdot (\nabla U - U\nabla V) 
	\quad \mbox{in} \ \R^N,
	\\[1ex]
	0 = \Delta V + U \quad  \mbox{in} \ \R^N. 
	\end{array}
	\right.
	\label{eq1.7}
\end{equation}
For $\alpha > 0$, we denote by $(U_{\alpha}(r), V_{\alpha}(r))$, with $r = |x|$, 
a radially symmetric solution of (\ref{eq1.7}) satisfying 
\begin{equation}
	U_{\alpha}(0) = \alpha, \quad V_{\alpha}(0) = 0
	\quad \mbox{and} \quad U_{\alpha}'(0) = V_{\alpha}'(0) = 0.
	\label{eq1.8}
\end{equation}
We see that the exponent $\lam_0$, defined by (\ref{eq1.6}), appears in the 
asymptotic expansion of $U_{\alpha}(r)$ as $r \to \infty$.

\begin{proposition}\label{prp1.1}
For $\alpha > 0$, the solution $U_{\alpha}$ satisfies
\begin{equation}
	U_{\alpha}(r) = U_{\infty}(r) -C_N(\alpha)r^{-2-\lam_0} + o(r^{-2-\lam_0}) 
	\quad \mbox{as} \ r \to \infty 
	\label{eq1.9}
\end{equation}
if $N \geq 11$, and
\begin{equation}
	U_{\alpha}(r) = U_{\infty}(r) -C_N(\alpha)r^{-2-\lam_0}\log r
	+ o(r^{-2-\lam_0}\log r)
	\quad \mbox{as} \ r \to \infty
	\label{eq1.10}
\end{equation}
if $N = 10$, 
where $C_N(\alpha)$ is a positive constant.
In the both $(\ref{eq1.9})$ and $(\ref{eq1.10})$, the constant $C_N(\alpha)$ satisfies 
\begin{equation}
	C_N(\alpha) = \alpha^{-\frac{\lam_0}{2}}C_N(1)
	\quad \mbox{for} \ \alpha > 0.
	\label{eq1.11}
\end{equation}
\end{proposition}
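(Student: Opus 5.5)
Since $\nabla U = U\nabla V$ for a steady state, we take $U = \alpha e^{V}$ with $V(0)=0$, so the problem reduces to the Liouville equation $\Delta V + \alpha e^V = 0$. Rescaling does two things. First, with $V_\alpha(r) = V_1(\sqrt{\alpha}\,r)$ we get $U_\alpha(r) = \alpha U_1(\sqrt\alpha r)$. Second, since $\alpha U_\infty(\sqrt\alpha r) = U_\infty(r)$, any expansion $U_1(s) = U_\infty(s) - C s^{-2-\lam_0}(\log s)^k + o(\cdot)$ gives
\[
U_\alpha(r) = U_\infty(r) - C\alpha\,\alpha^{-1-\lam_0/2} r^{-2-\lam_0}(\log \sqrt\alpha r)^k + o(\cdot).
\]
Using $\log(\sqrt\alpha r)\sim \log r$, this yields (\ref{eq1.11}). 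So it suffices to treat $\alpha=1$.

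Next we pass to Emden–Fowler variables. Put $s=\log r$ and $w(s) = V_1(r) + 2\log r - \log(2N-4)$, so that $U_1 = U_\infty e^{w}$. Then $w$ solves the autonomous equation
\[
w'' + (N-2)w' + (2N-4)(e^{w}-1) = 0,
\]
with $w\to-\infty$ as $s\to-\infty$. The linearization at $w=0$ has characteristic polynomial $\mu^2+(N-2)\mu+2(N-2)=0$, whose roots are $-\lam_0$ and $-\lam_1$ with $\lam_{0,1} = \tfrac{N-2\mp\sqrt{(N-2)(N-10)}}{2}$. These roots are real and distinct for $N\ge11$ and form a double root $\lam_0=\lam_1=4$ for $N=10$.

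For convergence, we use the energy $E = \tfrac12 w'^2 + (2N-4)(e^w - 1 - w)$, which satisfies $E' = -(N-2)w'^2\le0$. Hence $(w,w')$ is bounded forward in time and, by LaSalle, converges to the unique equilibrium $(0,0)$. Since the equilibrium is a stable node with real eigenvalues, standard ODE linearization (Hartman/variation of constants with the $O(w^2)$ remainder) gives the following. For $N\ge11$, $w(s) = -C e^{-\lam_0 s} + o(e^{-\lam_0 s})$, unless the trajectory lies on the strong stable manifold, in which case $w = O(e^{-\lam_1 s})$. For $N=10$, $w(s) = -(C s + D)e^{-4s} + o(\cdot)$, with the exceptional case being $C=0$. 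Translating back via $U_1 - U_\infty = U_\infty(e^w-1)\approx U_\infty w$ gives (\ref{eq1.9}) and (\ref{eq1.10}) with $C_N(1) = (2N-4)C$.

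The main obstacle is to show that the leading coefficient is nonzero and positive. Positivity of the coefficient is fine once we know $w<0$ for all $s$. This holds because $U_1<U_\infty$, i.e. the regular solution lies below the singular one for $N\ge10$, which is a known fact (Joseph–Lundgren-type non-intersection). To see it directly, note that along the trajectory $w$ never crosses $0$: in the node case with $N\ge10$ there is no oscillation. More precisely, in the phase plane, the region bounded by the slow eigendirection and the $w'$-axis is positively invariant, which one checks via the slope comparison $w'/w$ against $-\lam_0$.

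To exclude the trajectory lying on the strong stable manifold (decay rate $\lam_1$, or $C=0$ when $N=10$), we would argue by contradiction with a Sturm comparison. Set $\phi=-w>0$. It satisfies $\phi''+(N-2)\phi'+(2N-4)\tfrac{1-e^{-\phi}}{\phi}\,\phi=0$, whose coefficient is smaller than $2N-4$. Comparing with $\psi=e^{-\lam_0 s}$ and the Wronskian $W=e^{(N-2)s}(\phi'\psi-\phi\psi')$, we find $W'>0$. However, the fast decay of $\phi$ forces $W\to0$ at $+\infty$, while $W\to 0$ at $-\infty$ as well, which is a contradiction. This Wronskian argument is the step I expect to require the most care, especially in justifying the limits at both ends and in handling the $N=10$ case with $\psi = s e^{-4s}$ or $e^{-4s}$.
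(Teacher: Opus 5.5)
\textbf{Verdict.} Your proof is correct. The skeleton matches the paper's Section 6: scaling to $\alpha=1$, the Emden--Fowler variable $w$, linearization at $0$ with roots $-\lambda_0,-\lambda_1$, and $C_N(1)=(2N-4)C$. Two substitutions are harmless. First, the paper proves the linearization step in detail rather than calling it standard: an a priori rate in Lemma~\ref{lem6.4}, then $z=e^{\lambda_0 s}w$ and Proposition~\ref{prp6.1}, with the Jordan-block case $N=10$ handled separately. Second, the paper takes $w\to0$ and $w<0$ from Tello, where you use energy/LaSalle and phase-plane invariance.

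\textbf{Where the routes differ.} The genuine difference is how the leading coefficient is shown to be nonzero.
\begin{itemize}
\item The paper gets only $\ell_N(\alpha)\ge0$ from $w<0$ and then argues indirectly. If $\ell_N(\alpha_1)=\ell_N(\alpha_2)$ for some $\alpha_1<\alpha_2$, then $\eta=w(\cdot,\log\alpha_2)-w(\cdot,\log\alpha_1)$ satisfies $\int^\infty ds/(e^{(N-2)s}\eta^2)=\infty$ (Lemma~\ref{lem6.6}). The Sturm comparison of Proposition~\ref{prp6.3} then forces $\phi(\cdot,\log\alpha_3)-\phi(\cdot,\log\alpha_2)$ to vanish somewhere, contradicting Tello's ordering (\ref{eq3.5}). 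Strict decrease in $\alpha$ together with $\ell_N\ge0$ rules out a zero coefficient.
\item Your weighted Wronskian compares a single solution with the linear solution $e^{-\lambda_0 s}$. It needs only $w<0$ and $(1-e^{-\phi})/\phi<1$. It is shorter and avoids both Proposition~\ref{prp6.3} and the ordering of the family in $\alpha$. It is the $\delta=1$ counterpart, with reversed inequality, of the computation in Lemma~\ref{lem6.4}.
\item The strict monotonicity of $\ell_N$ that the paper obtains along the way follows anyway from (\ref{eq3.8}) once $\ell_N(1)>0$.
\end{itemize}

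\textbf{Correction for $N\ge 11$.} With $\psi=e^{-\lambda_0 s}$ one has $W=e^{(\lambda_1-\lambda_0)s}(e^{\lambda_0 s}\phi)'$. On the strong stable manifold, where $\phi\sim c\,e^{-\lambda_1 s}$, this gives $W\to c(\lambda_0-\lambda_1)<0$, not $0$. The contradiction with $W>0$ still holds. Better, you need no case distinction and no derivative asymptotics:
\begin{itemize}
\item $W'>0$ and $W(-\infty)=0$ give $(e^{\lambda_0 s}\phi)'>0$.
\item Hence $C=\lim e^{\lambda_0 s}\phi\ge e^{\lambda_0 s_0}\phi(s_0)>0$ directly.
\item This is the same fact as the positive invariance of $\{w<0,\ w'+\lambda_0 w<0\}$ in your phase-plane step.
\end{itemize}

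\textbf{Correction for $N=10$.} You must take $\psi=e^{-4s}$. The alternative $se^{-4s}$ changes sign at $s=0$, so $W'$ would have no sign. With $\psi=e^{-4s}$, $W=(e^{4s}\phi)'$ is positive and increasing. Hence $e^{4s}\phi(s)\ge e^{4s_0}\phi(s_0)+W(s_0)(s-s_0)$, and the coefficient of $se^{-4s}$ is at least $W(s_0)>0$.
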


We next consider the optimal criteria for the existence of infinite time blow-up solutions. 
The following result says that the exponent $\lam_0$ 
in Theorem B is optimal when $N \geq 11$.

\begin{theorem}\label{thm1.2}
Let $N \geq 11$. 
Assume that $u_0(r)$ is radially symmetric and satisfies $(\ref{eq1.4})$.
\begin{itemize}
\item[{\rm (i)}]
If 
\begin{equation}
	\lim_{r \to \infty}r^{2+\lam_0}\left(U_{\infty}(r)- u_0(r)\right)=  0,
	\label{eq1.12}
\end{equation}
then the solution $(u, v)$ of $(\ref{eq1.1})$ satisfies 
$\|u(\cdot, t)\|_{L^{\infty}(\R^N)} \to \infty$ as $t \to \infty$.
\item[{\rm (ii)}] 
If 
\begin{equation}
	\liminf_{r \to \infty}r^{2+\lam_0}\left(U_{\infty}(r)- u_0(r)\right) >  0,
	\label{eq1.13}
\end{equation}
then the solution $(u, v)$ of $(\ref{eq1.1})$ satisfies 
$\sup_{t > 0}\|u(\cdot, t)\|_{L^{\infty}(\R^N)} < \infty$.
\end{itemize}
\end{theorem}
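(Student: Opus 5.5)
The natural route is to pass to the averaged mass and use comparison with the steady states $U_\alpha$ of Proposition~\ref{prp1.1}. Set
$$
M(r,t)=\int_0^r s^{N-1}u(s,t)\,ds .
$$
Then the radial system (\ref{eq1.1}) becomes the scalar parabolic equation
$$
M_t = M_{rr}-\frac{N-1}{r}M_r + r^{1-N}MM_r ,
$$
which satisfies a comparison principle. Its stationary solutions are $M_\alpha(r)=\int_0^r s^{N-1}U_\alpha(s)\,ds$ and $M_\infty(r)=2r^{N-2}$. By (\ref{eq1.4}) we have $M(r,0)<M_\infty(r)$. The assumptions (\ref{eq1.12}) and (\ref{eq1.13}) translate into statements about $M_\infty-M(\cdot,0)$: after integration, $2r^{N-2}-M(r,0)=o(r^{N-2-\lam_0})$, respectively $\geq c\,r^{N-2-\lam_0}$, as $r\to\infty$. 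The exponent $\lam_0<N-2$, so these integrations are legitimate. Proposition~\ref{prp1.1} says the same thing for $M_\alpha$: $M_\infty-M_\alpha\sim c\,C_N(\alpha)r^{N-2-\lam_0}$, with $C_N(\alpha)=\alpha^{-\lam_0/2}C_N(1)$. Hence $C_N(\alpha)\to 0$ as $\alpha\to\infty$ and $C_N(\alpha)\to\infty$ as $\alpha\to 0$.

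For (ii), choose $\alpha$ large enough that $C_N(\alpha)$ dominates the constant in (\ref{eq1.13}) at infinity in the reversed sense. More precisely, pick $\alpha$ small enough that $M_\infty-M_\alpha\le \tfrac{c}{2}r^{N-2-\lam_0}$ fails, i.e. arrange $M(r,0)\le M_\alpha(r)$ for $r\ge R_0$. On the compact region $[0,R_0]$, increasing $\alpha$ lets $M_\alpha$ approach $M_\infty$ uniformly on compacts away from $0$, and near $0$ we have $M_\alpha\approx \alpha r^N/N$. Since the two constraints pull in opposite directions, we first show that $U_\infty-U_\alpha$ is comparable to $C_N(\alpha)r^{-2-\lam_0}$ uniformly for $r\ge \alpha^{-1/2}$, using scaling $U_\alpha(r)=\alpha U_1(\sqrt\alpha r)$. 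Then one chooses a suitable $\alpha$ (possibly after a short-time smoothing, which gives $M(r,t_0)\le Cr^N$ near $0$ and keeps the tail bound) so that $M(\cdot,t_0)\le M_\alpha$ everywhere. Comparison then gives $M(\cdot,t)\le M_\alpha$ for all $t$, so $r^{-N}M\le C$ near $0$. Boundedness of $u$ follows by standard parabolic regularity: with $\nabla v$ bounded, a local $L^\infty$ estimate applies.

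For (i), argue by contradiction. Suppose $\|u(t)\|_\infty\le K$ along a sequence of times, or globally. The plan is to compare from below with $M_\alpha$ for arbitrarily large $\alpha$, which forces $u(0,t)$ to be large for large $t$. Since $M_\infty-M(\cdot,0)=o(r^{N-2-\lam_0})$, for any $\alpha$ the data lie above $M_\alpha$ for $r\ge R(\alpha)$, but not on the inner region. To handle the inner region, construct a subsolution of the form $M_{\alpha}$ glued with a subsolution, or use the linearization at $M_\infty$. In the variable $w=M_\infty-M$, the linearized operator has the radial eigenfunctions $r^{N-2-\lam}$ for $\lam\in\{\lam_0,\lam_1\}$, and $\lam_0$ is the slower decay rate. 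A solution with tail $o(r^{N-2-\lam_0})$ must then have $w\to 0$ locally, which is exactly the convergence $u\to U_\infty$ locally. That convergence is incompatible with boundedness near the origin, because $u(\cdot,t)\to U_\infty$ in $C_{\rm loc}(\R^N\setminus\{0\})$ together with $\|u\|_\infty\le K$ contradicts $\int_{B_r}U_\infty\,dx\gg K|B_r|$. Concretely, I would use the Colasuonno–Winkler-type self-similar supersolutions for $w$ in the variable $\xi=r/\sqrt t$, taking the form $t^{-\lam_0/2}\phi(r/\sqrt t)$-like profiles, to show $w(r,t)\to0$ for fixed $r$.

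The main obstacle is the sharp threshold in (i): the tail is only $o(r^{N-2-\lam_0})$, with no power gain. Theorem~B needs $\theta>\lam_0$, so its argument cannot be reused directly. I would handle this by using the family $M_\alpha$ itself: for each large $\alpha$, $M(\cdot,0)$ exceeds $M_\alpha$ at infinity. I would then show via an intersection-number (zero-counting) argument on $M-M_\alpha$ that after some time $M(\cdot,t)\ge M_\alpha$ fails to cross. This yields $\sup_r r^{-N}M(r,t)\ge \alpha/N-o(1)$ for large $t$, and since $\alpha$ is arbitrary, $\|u(t)\|_\infty\to\infty$.
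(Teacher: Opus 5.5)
\textbf{The genuine gap is in part (i).} None of the routes you list is carried through, and the one you finally rely on cannot work as stated.

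The linearization/self-similar route is, as you note yourself, tied to a power gain $\theta>\lambda_0$. Your claim that a tail $o(r^{N-2-\lambda_0})$ forces $u(\cdot,t)\to U_\infty$ locally is postulated rather than derived, and it is stronger than the theorem.

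The zero-number step ``after some time $M(\cdot,t)\ge M_\alpha$'' has no mechanism behind it. The number of sign changes of $M(\cdot,t)-M_\alpha$ being nonincreasing is perfectly compatible with a crossing that persists forever. That is exactly what happens if $\|u(\cdot,t)\|_\infty$ stays below $\alpha$, since then $M(r,t)<M_\alpha(r)$ near $r=0$ for all $t$. So this step is equivalent to the conclusion. Nothing in your sketch transports the information ``$M(\cdot,0)>M_\alpha$ near infinity'' into the interior.

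The missing idea is a \emph{stationary subsolution below the data carrying the same $r^{-2-\lambda_0}$ coefficient as $U_\alpha$}. The paper proceeds as follows.
\begin{enumerate}
\item Replace $u_0$ by its nonincreasing minorant $u_{0,m}$, which keeps the tail condition.
\item Take $\phi(\cdot,\log\alpha)-C_0r^{-(N-2)/2}$, with $C_0$ the least constant putting it below $\log u_{0,m}$, and flatten it to a constant left of its maximum.
\item Since $2N-4\le (N-2)^2/4$, the function $r^{-(N-2)/2}$ is a supersolution of the linearization at $\phi_\infty$. This yields (\ref{eq3.17}), i.e.\ $\int_0^r s^{N-1}e^{\underline{\phi}_0(s)}\,ds$ is a stationary subsolution of your $M$-equation.
\item Since $(N-2)/2>\lambda_0$ for $N\ge 11$, the correction is $o(r^{-\lambda_0})$, so the tail coefficient is unchanged.
\item The solution issued from this subsolution is nondecreasing in $t$ and stays below the true one. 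Its limit is either unbounded, or a regular steady state generated by some $U_\beta$ and lying above the subsolution.
\item In the bounded case, comparing the $r^{-2-\lambda_0}$ coefficients gives $\ell_N(\beta)\le\ell_N(\alpha)$, hence $\beta\ge\alpha$ by (\ref{eq3.8}).
\end{enumerate}
Since $\alpha$ is arbitrary, $\liminf_{t\to\infty}u(0,t)\ge\alpha$ for every $\alpha$, which gives the full limit rather than blow-up along a sequence.

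\textbf{In part (ii) your choice of $\alpha$ is garbled.} ``Pick $\alpha$ small enough'' is the wrong direction, and the tail and inner constraints do not pull in opposite directions. Since $C_N(\alpha)=\alpha^{-\lambda_0/2}C_N(1)\to0$, a large $\bar\alpha$ gives $u_0\le U_{\bar\alpha}$ on $[r_1,\infty)$. Because $U_\alpha<U_\beta<U_\infty$ for $\alpha<\beta$ when $N\ge10$ (see (\ref{eq3.5})), this persists for every larger $\alpha$. Meanwhile $U_\alpha(0)=\alpha\to\infty$ and the locally uniform convergence $U_\alpha\to U_\infty$ on $(0,\infty)$ handle the compact region $[0,r_1]$. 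No smoothing is needed: $u_0$ is continuous and lies below $U_\infty$, hence is bounded. This is Lemma \ref{lem3.3}. Once it is in place, your comparison $M(\cdot,t)\le M_{\hat\alpha}$ plus the regularity step is the paper's proof of (ii). Your scaling bound for $r\ge\alpha^{-1/2}$ would also do the job once the direction is corrected.
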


In the case $N = 10$, we obtain the following criteria which involves a logarithmic correction.

\begin{theorem}\label{thm1.3}
Let $N = 10$. 
Assume that $u_0(r)$ is radially symmetric and satisfies $(\ref{eq1.4})$.
\begin{itemize}
\item[{\rm (i)}] If 
\begin{equation}
	\lim_{r \to \infty}\frac{r^{2+\lam_0}}{\log r}\left(U_{\infty}(r)- u_0(r)\right) =  0,
	\label{eq1.14}
\end{equation}
then the solution $(u, v)$ of $(\ref{eq1.1})$ satisfies 
$\|u(\cdot, t)\|_{L^{\infty}(\R^N)} \to \infty$ as $t \to \infty$.

\item[{\rm (ii)}] If 
\begin{equation}
	\liminf_{r \to \infty}\frac{r^{2+\lam_0}}{\log r}\left(U_{\infty}(r)- u_0(r)\right) >  0,
	\label{eq1.15}
\end{equation}
then the solution $(u, v)$ of $(\ref{eq1.1})$ satisfies 
$\sup_{t > 0}\|u(\cdot, t)\|_{L^{\infty}(\R^N)} < \infty$.
\end{itemize}
\end{theorem}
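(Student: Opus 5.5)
We plan to prove Theorem \ref{thm1.3} by comparison for the averaged-mass formulation, with the stationary solutions $U_\alpha$ of Proposition \ref{prp1.1} (case $N=10$, with the logarithmic correction (\ref{eq1.10})) as barriers. Set
$$M(r,t)=r^{-N}\int_0^r s^{N-1}u(s,t)\,ds, \qquad w(s,t)=M(\sqrt{s},t)\ \text{ or simply } M \text{ in } r.$$
A direct computation from (\ref{eq1.1}) (radial Poisson equation, $v_r=-rM$) gives the scalar parabolic equation
$$M_t=M_{rr}+\frac{N+1}{r}M_r+rMM_r+NM^2,$$
which admits a comparison principle among radial functions. The same transform applied to $U_\infty$ and to $U_\alpha$ yields stationary solutions $M_\infty=2/r^2$ and $M_\alpha$. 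By (\ref{eq1.10}) and integration,
$$M_\alpha(r)=\frac{2}{r^2}-\frac{C'\alpha^{-\lam_0/2}\log r}{r^{2+\lam_0}}(1+o(1)),$$
with $C'>0$ and $\lam_0=4$ when $N=10$. The hypotheses on $u_0$ transfer to $M(\cdot,0)$ via the same integration: (\ref{eq1.14}) gives $r^{2+\lam_0}(2r^{-2}-M(r,0))/\log r\to0$, and (\ref{eq1.15}) gives a positive liminf. For this transfer we use (\ref{eq1.4}) and split the integral $\int_0^r$ into a bounded part and a tail.

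For part (ii), we choose $\alpha$ large so that $C'\alpha^{-\lam_0/2}$ is smaller than the liminf in (\ref{eq1.15}). Then $M(r,0)\le M_\alpha(r)$ for $r\ge R_0$. On $[0,R_0]$ we have $M(r,0)<2/r^2$ strictly, by continuity and (\ref{eq1.4}). Since $M_\alpha\to 2/r^2$ uniformly on compacts away from $0$ as $\alpha\to\infty$, and $M(\cdot,0)$ is bounded near $0$ while $M_\alpha(0)=\alpha/N$ is large, enlarging $\alpha$ gives $M(\cdot,0)\le M_\alpha$ everywhere. This needs care, because $C'\alpha^{-\lam_0/2}$ shrinks as $\alpha$ grows, so the tail and compact-range requirements compete. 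We resolve this by first fixing $\alpha_1$ from the tail condition on $[R_0,\infty)$, and then noting that the family is ordered: for $N\ge10$ the $U_\alpha$ do not intersect and $M_\alpha$ increases in $\alpha$. Hence larger $\alpha$ only helps on the tail, and the compact range is then handled by taking $\alpha$ large. Comparison gives $M(r,t)\le M_\alpha(r)$ for all $t$. Boundedness of $u$ then follows: $M\le\alpha/N$ bounds $\nabla v$ locally, and parabolic regularity together with the $2/r^2$ bound away from the origin gives $\sup_t\|u\|_\infty<\infty$.

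For part (i), we argue by contradiction. Suppose $\|u(t)\|_\infty\not\to\infty$, so there are $t_k\to\infty$ with $u(\cdot,t_k)$ bounded. The main obstacle is to exclude convergence to some $U_\alpha$ and to pass from subsequences to the full limit. We plan to do this with a lower comparison. For any $\alpha$, condition (\ref{eq1.14}) gives $M(r,0)\ge M_\alpha(r)$ for $r\ge R(\alpha)$. Since the log-tail of $M_\alpha$ is strictly below $2r^{-2}$ at order $r^{-2-\lam_0}\log r$, a slightly shifted subsolution $M_\alpha(r)-\vep\,\phi(r)$ lies below $M(\cdot,0)$ on all of $[0,\infty)$. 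Here $\phi$ is a small perturbation localized near the origin, or we use the scaling $M_\alpha(r)=\alpha M_1(\sqrt{\alpha}r)$ with $\alpha$ small so that the value at $0$ is small. Comparison then yields $M(0,t)\ge$ something tending to $\alpha/N$-level bounds for every $\alpha$.

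More robustly, we will build, for each $\alpha$, a time-dependent subsolution that starts below $M(\cdot,0)$ and converges to $M_\alpha$ as $t\to\infty$. For this we use the stability of $M_\alpha$ from below: the linearization at $M_\alpha$ has no positive spectrum for $N\ge10$ in weighted spaces adapted to the $r^{-2-\lam_0}\log r$ decay. This would give $\liminf_t u(0,t)\ge\alpha$ for every $\alpha$, hence $\|u(t)\|_\infty\to\infty$. Constructing this subsolution with the borderline logarithmic tail, exactly where $N=10$ differs from $N\ge11$, is the step we expect to be hardest. We would first try gluing $M_\beta$ with $\beta<\alpha$ near the origin, and using the monotone convergence of solutions started from stationary subsolutions.
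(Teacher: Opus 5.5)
Your part (ii) is correct and is essentially the paper's argument. You use a tail comparison with a stationary solution of large height via $C_N(\alpha)=\alpha^{-\lam_0/2}C_N(1)\to0$. You handle the compact range using the ordering (\ref{eq3.5}) and locally uniform convergence to the singular solution, as in Lemma~\ref{lem3.3}. Then you apply Corollary~\ref{cor2.5} and Proposition~\ref{prp2.7}. The paper matches $\log u_0$ against $\phi(\cdot,\log\alpha)$ rather than $M(\cdot,0)$ against $M_\alpha$, which makes no difference.

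Part (i), however, has a genuine gap: you never produce anything below $M(\cdot,0)$ that forces $M(0,t)$ to grow. None of your suggested tools works:
\begin{itemize}
\item A small perturbation of $M_\alpha$ localized near the origin cannot lie below $M(\cdot,0)$. For the large $\alpha$ you need, $M(0,0)=u_0(0)/N$ may be far below $M_\alpha(0)=\alpha/N$.
\item Taking $\alpha$ small only gives a stationary solution below $M(\cdot,0)$, hence the useless bound $M(0,t)\ge\alpha/N$ with $\alpha$ small.
\item Gluing $M_\beta$, $\beta<\alpha$, to $M_\alpha$ by a maximum is unavailable. For $N\ge10$ the family is strictly ordered, so $\max(M_\beta,M_\alpha)=M_\alpha$.
\item The weighted spectral stability of $M_\alpha$ is neither established nor needed.
\end{itemize}
Your contradiction hypothesis on $t_k$ is also never used.

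The missing idea is an explicit, genuinely non-stationary subsolution at the Liouville level (Lemmas~\ref{lem7.1} and \ref{lem3.4}).
\begin{itemize}
\item \emph{Construction.} Let $\psi(r)=r^{-(N-2)/2}$, so $\Delta\psi+\frac{(N-2)^2}{4r^2}\psi=0$. Since $2N-4\le\frac{(N-2)^2}{4}$ exactly when $N\ge10$, the function $\phi(\cdot,\log\alpha)-C\psi$ satisfies $\Delta\phi+e^{\phi}>0$ for \emph{every} $C>0$.
\item \emph{Touching from below.} Take $C=\sup_r(\phi(r,\log\alpha)-\phi_0(r))/\psi(r)$, so the function touches $\phi_0=\log u_0$ from below, and then flatten it at its maximum.
\item \emph{Monotone reduction.} The flattening needs $\phi_0$ nonincreasing. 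This is why the paper first passes to $u_{0,m}(r)=\min_{[0,r]}u_0$, which still satisfies (\ref{eq1.14}). By Lemma~\ref{lem2.6} and Proposition~\ref{prp2.1} one then has $w_m\le w$ and $\|w_m(\cdot,t)\|_\infty=w_m(0,t)\le u(0,t)/N$.
\item \emph{Tail preserved.} For $N=10$ one has $\lam_0=4=(N-2)/2$, so $\psi=r^{-4}=o(r^{-\lam_0}\log r)$ leaves the logarithmic tail coefficient of $\phi(\cdot,\log\alpha)$ intact.
\item \emph{Monotone limit.} The averaged mass $\underline w_0$ of $e^{\underline\phi_0}$ is a stationary subsolution below $w_{0,m}$. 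The solution starting from it is nondecreasing in $t$ (Proposition~\ref{prp2.2}). It is either unbounded or converges to a bounded regular stationary solution $W$, which by Proposition~\ref{prp2.4} is the averaged mass of some $U_\beta$.
\item \emph{Identification.} Comparing $W\ge\underline w_0$ at order $r^{-2-\lam_0}\log r$ gives $C_N(\beta)\le C_N(\alpha)$. Strict monotonicity of $C_N$ then gives $\beta\ge\alpha$, i.e.\ $\liminf_{t\to\infty}u(0,t)\ge\alpha$ for every $\alpha$.
\end{itemize}
So your intuition that a subsolution should converge to $M_\alpha$ is right. The mechanism, though, is monotonicity in time, classification of regular stationary solutions, and identification through the tail coefficient, not linearized stability.
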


It should be mentioned that criteria obtained in Theorems \ref{thm1.2} and \ref{thm1.3} 
have a close relation with the asymptotic expansion of $U_{\alpha}$ obtained 
in Proposition \ref{prp1.1}.
By (\ref{eq1.11}) we have $C_N(\alpha) \to 0$ as $\alpha \to \infty$. 
Then Theorems \ref{thm1.2} and \ref{thm1.3} say that 
the solution $(u, v)$ blows up in infinite time if  
$u_0(r) > U_{\alpha}(r)$ for sufficiently large $r > 0$ with any $\alpha > 0$,  
while the solution $(u, v)$ is remain bounded if   
$u_0(r) \leq U_{\alpha}(r)$ for sufficiently large $r > 0$ with some $\alpha > 0$. 
Note that the relation (\ref{eq1.11}) follows from the scaling property of (\ref{eq1.1}).


The idea of the proof of the theorems is to reduce the system (\ref{eq1.1}) 
into a scalar equation involving the averaged mass of $u$ (see, e.g., \cite{BCK, GuPe, SoWi, Naib}).
Let $(u, v)$ be a radially symmetric solution of (\ref{eq1.1}) for $0 \leq t < T$.
Define 
\begin{equation}
	w(r, t) = \frac{1}{r^N}\int^r_0 s^{N-1}u(s, t) ds \quad \mbox{for} \ r > 0, \ 0 \leq t < T.
	\label{eq1.16}
\end{equation}
Then $w$ satisfies
\begin{equation}
	w_t = w_{rr} + \frac{N+1}{r}w_r + rw_rw + Nw^2 \quad 
	\mbox{for} \ r > 0, \ 0 \leq t < T,
	\label{eq1.17}
\end{equation}
and $w(r, 0) = w_0(r)$ for $r \geq 0$, where 
\begin{equation}
	w_0(r) =  \frac{1}{r^N}\int^r_0 s^{N-1}u_0(s) ds \quad \mbox{for} \ r > 0
	\quad \mbox{and} \quad  w_0(0) = \frac{u_0(0)}{N}.
	\label{eq1.18}
\end{equation}
It can be seen from (\ref{eq1.17}) that $w = w(r, t)$ is a radially symmetric solution of 
the equation 
$$
	w_t = \Delta w + (x\cdot\nabla w)w + Nw^2 \quad \mbox{for} \ 
	x \in \R^{N+2}, \ 0 < t < T,
$$
where $\Delta$ and $\nabla$, respectively, are Laplacian and the spatial gradient 
in $N+2$ space variables.
We will study the Cauchy problem for the transformed scalar equation 
\begin{equation}
	\left\{
	\begin{array}{ll}
	w_t = \Delta w + (x\cdot \nabla w)w + Nw^2, \quad &
	x \in \R^{N+2}, \ t > 0, 
	\\[2ex]
	w(x, 0) = w_0(|x|) \geq 0, \quad & x \in \R^{N+2},
	\end{array}
	\right.
	\label{eq1.19}
\end{equation}
where the comparison principle holds. (See Proposition 2.1 below.) 
We also investigate the properties of solutions to the stationary equation
\begin{equation}
	\Delta w + (x\cdot \nabla w)w + Nw^2 = 0 \quad \mbox{in} \ \R^{N+2}.
	\label{eq1.20}
\end{equation}
We will obtain the criteria in Theorems \ref{thm1.2} and \ref{thm1.3} by making use of 
super- and subsolutions of (\ref{eq1.20}) 
together with the comparison argument for solutions of (\ref{eq1.19}).  
It should be mentioned that regular solutions of (\ref{eq1.20}) 
and the stationary solutions of (\ref{eq1.7}) 
can be represented in terms of solutions to the Liouville equation 
\begin{equation}
	\Delta\phi + e^{\phi} = 0 \quad \mbox{in} \ \R^N.
	\label{eq1.21}
\end{equation}
We will investigate the asymptotic expansion of regular radial solutions of (\ref{eq1.21}) 
to show Proposition \ref{prp1.1}. 
We also construct super- and subsolutions to (\ref{eq1.20}) by using some properties of 
regular solutions to (\ref{eq1.21}).

The paper is organized as follows. 
In Section 2, we recall preliminary results for the Cauchy problem (\ref{eq1.19}) and 
its stationary problem (\ref{eq1.20}). 
In Section 3 we investigate the property of solutions to (\ref{eq1.21}) and 
state Proposition \ref{prp3.1} and Lemmas \ref{lem3.3} and \ref{lem3.4}. 
We also give the proof of Proposition \ref{prp1.1} in Section 3. 
In Sections 4 and 5, we give the proof of Theorems \ref{thm1.2} and \ref{thm1.3} by 
employing the results obtained in Section 3.
In Section 6 we give the proof of Proposition \ref{prp3.1}, 
and we prove Lemmas \ref{lem3.3} and \ref{lem3.4} in Section 7.


\section{Preliminaries}

We first recall the following comparison principle \cite[Proposition 2.1]{Naib}.
We set $Q_{T_0} = \R^{N+2} \times (0, T_0]$ for $T_0 > 0$. 

\begin{proposition}\label{prp2.1}
Assume that $w_1, w_2 \in C^{2, 1}(Q_{T_0})\cap C(\R^{N+2}\times [0, T_0])$ satisfy 
$$
	\partial_t w_1 - \Delta w_1 -(x\cdot \nabla w_1)w_1 -Nw_1^2 
	\leq 
	\partial_t w_2 - \Delta w_2 -(x\cdot \nabla w_2)w_2 -Nw_2^2 
$$
in $Q_{T_0}$, 
where $\Delta$ and $\nabla$, respectively, are 
Laplacian and the spatial gradient in $\R^{N+2}$.
Assume that $\sup_{Q_{T_0}}|w_i| < \infty$ for $i = 1, 2$,  
and that either
$x \cdot \nabla w_1$ or $x \cdot \nabla w_2$ is bounded above in $Q_{T_0}$. 
If $w_1(x, 0) \leq w_2(x, 0)$ for $x \in \R^{N+2}$ then 
$w_1 \leq w_2$ in $Q_{T_0}$.
\end{proposition}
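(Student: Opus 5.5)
The natural route is the standard weak maximum principle for bounded solutions on the whole space, applied to the difference. Set $z = w_1 - w_2$. Subtracting the two inequalities, I would write the nonlinear terms as
\begin{align*}
(x\cdot\nabla w_1)w_1 - (x\cdot\nabla w_2)w_2 &= (x\cdot \nabla z)\,w_1 + (x\cdot\nabla w_2)\,z,\\
N(w_1^2-w_2^2) &= N(w_1+w_2)\,z .
\end{align*}
This gives the linear inequality
$$
z_t - \Delta z - w_1\,(x\cdot\nabla z) - c(x,t)\,z \leq 0, \qquad c = x\cdot\nabla w_2 + N(w_1+w_2),
$$
with $z(\cdot,0)\le 0$. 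If instead $x\cdot\nabla w_1$ is the one bounded above, I would use the symmetric splitting $w_2(x\cdot\nabla z) + (x\cdot\nabla w_1)z$. In either case the zeroth-order coefficient $c$ is bounded above on $Q_{T_0}$, and this is all that is needed. Only the positive part $z_+$ matters, and on the set $\{z>0\}$ one has $cz \le Kz$ with $K = \sup c_+$. Replacing $z$ by $e^{-(K+1)t}z$ then gives a strictly negative zeroth-order coefficient.

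The main obstacle is that the drift $w_1\,x\cdot\nabla$ is unbounded at spatial infinity, since $|x|$ grows. So a naive barrier like $\varepsilon(|x|^2 + Ct)$ must be checked carefully. For $\psi = \varepsilon e^{Mt}(1+|x|^2)$ I compute
$$
\psi_t - \Delta\psi - w_1\, x\cdot\nabla\psi - K\psi = \varepsilon e^{Mt}\bigl[(M-K)(1+|x|^2) - 2(N+2) - 2w_1|x|^2\bigr].
$$
Because $w_1$ is bounded, $|w_1|\le B$, the bracket is positive once $M > K + 2B + 2(N+2)$. Hence $\psi$ is a strict supersolution of the linear operator, and the unbounded drift is absorbed by the quadratic growth of the barrier precisely because the drift is linear in $x$.

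Having the barrier, I would set $\tilde z = z - \psi$. Since $z$ is bounded and $\psi \to \infty$ as $|x|\to\infty$, any positive maximum of $\tilde z$ on $\overline{B_R}\times[0,T_0]$ is attained at an interior point with $t>0$, for $R$ large. At the first time $\tilde z$ reaches $0$ at such a point, we have $\tilde z_t\ge0$, $\nabla\tilde z=0$ and $\Delta \tilde z\le 0$. There $z=\psi>0$, so $cz\le Kz$, and the linear inequality for $z$ combined with the strict inequality for $\psi$ gives a contradiction. Therefore $z\le \psi$ on $Q_{T_0}$, and letting $\varepsilon\to0$ yields $w_1\le w_2$.

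A minor point is regularity at $t=0$, since the solutions are only continuous there. This is handled by starting from $t=0$ with $\tilde z(\cdot,0)<0$ strictly, which holds because $\psi>0$, and using continuity to locate the first touching time.
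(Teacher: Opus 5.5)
Your proof is correct. The paper does not actually prove Proposition~\ref{prp2.1}; it quotes it from Naito's earlier paper (Proposition 2.1 there). Your proposal therefore supplies a self-contained proof by the standard route:
\begin{itemize}
\item you linearize the difference $z=w_1-w_2$;
\item you observe that the one-sided bound on $x\cdot\nabla w_2$ (or on $x\cdot\nabla w_1$, with the other splitting) is exactly what makes the zeroth-order coefficient $c$ bounded above;
\item you absorb the drift $w_1\,x\cdot\nabla$, whose coefficient grows linearly in $|x|$, with the quadratic barrier $\psi=\varepsilon e^{Mt}(1+|x|^2)$.
\end{itemize}

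The barrier computation checks out, since $\Delta|x|^2=2(N+2)$ and $x\cdot\nabla|x|^2=2|x|^2$ in $\R^{N+2}$, and the bracket is positive for $M>K+2B+2(N+2)$. The first-touching-time argument is the right one. Because $\tilde z=0$ at the touching point, you only need $c\le K$ where $z=\psi>0$. The touching time may equal $T_0$, which is harmless: $Q_{T_0}$ contains $t=T_0$ and only the left time derivative is used.

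Note that if you argued instead at a positive maximum of $\tilde z$, the term $-K\tilde z$ would have the wrong sign. That is what your remark about replacing $z$ by $e^{-(K+1)t}z$ would fix; in the touching version the remark is unnecessary but harmless.

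Compared with simply citing the result, your argument makes transparent two things. First, only an upper bound on $x\cdot\nabla w_i$ is used, and only on the set $\{w_1>w_2\}$. Second, the unbounded coefficient $x$ is harmless because it enters the drift linearly and is dominated by a quadratically growing barrier.
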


Let us consider the Cauchy problem (\ref{eq1.19}) and 
the corresponding stationary equation (\ref{eq1.20}). 
We say that $w \in C^2(\R^{N+2})$ is a subsolution of (\ref{eq1.20}) if 
$w$ satisfies
$$
	\Delta w + (x\cdot \nabla w)w + Nw^2 \geq 0 \quad \mbox{for} \ x \in \R^{N+2}.
$$
By \cite[Proposition 2.3]{Naib} we obtain the following.

\begin{proposition}\label{prp2.2}
Let $w \in C^{2,1}(\R^{N+2}\times (0, \infty))\cap C(\R^{N+2}\times [0, \infty))$ 
be a global in time solution of $(\ref{eq1.19})$. 
Assume that $x\cdot \nabla w$ and $w$ 
are bounded above for all $x \in \R^{N+2}$ and $t > 0$. 
If $w_0 \in C^2(\R^{N+2}) \cap L^{\infty}(\R^{N+2})$ is 
a subsolution of $(\ref{eq1.20})$, 
then $w(x, t)$ is nondecreasing in $t > 0$ for each fixed $x \in \R^{N+2}$.
\end{proposition}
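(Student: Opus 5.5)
The plan is a standard monotonicity argument: compare $w(\cdot,t+h)$ with $w(\cdot,t)$ using Proposition \ref{prp2.1}. The first step is to compare the solution with its initial datum. Since $w_0$ is time independent, we can regard $w_1(x,t):=w_0(x)$ as a function on $Q_{T_0}$. Because $w_0$ is a subsolution of (\ref{eq1.20}),
$$
	\partial_t w_1 - \Delta w_1 - (x\cdot\nabla w_1)w_1 - Nw_1^2 = -\bigl(\Delta w_0 + (x\cdot\nabla w_0)w_0 + Nw_0^2\bigr) \leq 0,
$$
while $w_2:=w$ makes the same expression vanish. Both functions are bounded: $w_0\in L^\infty$ by assumption, and $w$ is bounded on each $Q_{T_0}$. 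For $w$ this comes from the upper bound in the hypothesis together with the lower bound $w\ge0$; if needed, the lower bound follows by comparing with the zero solution, for which $x\cdot\nabla 0=0$ is bounded. Finally, $x\cdot\nabla w_2$ is bounded above by assumption. Proposition \ref{prp2.1} therefore gives $w_0(x)\le w(x,t)$ for all $x$ and all $t\in(0,T_0]$, and since $T_0$ is arbitrary, for all $t>0$.

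The second step uses the time translation invariance of (\ref{eq1.19}). Fix $h>0$ and set $w_2(x,t):=w(x,t+h)$ and $w_1(x,t):=w(x,t)$. Both solve the equation in (\ref{eq1.19}), both are bounded, and $x\cdot\nabla w_2$ is bounded above. They are also continuous up to $t=0$: this is clear for $w_2$, since it is evaluated at times $t+h\ge h>0$, and $w_1$ is continuous on $\R^{N+2}\times[0,\infty)$ by the regularity class assumed for $w$. At $t=0$ we have $w_1(x,0)=w_0(x)\le w(x,h)=w_2(x,0)$ by the first step. Applying Proposition \ref{prp2.1} again gives $w(x,t)\le w(x,t+h)$ for all $t\ge0$. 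Since $h>0$ was arbitrary, $w(x,\cdot)$ is nondecreasing.

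The only delicate point is verifying the hypotheses of Proposition \ref{prp2.1}, namely two-sided boundedness and the regularity class. For boundedness, the statement only assumes $w$ is bounded above, so I would add a short remark deriving $w\ge 0$, or a lower bound, from comparison with $0$ using $w_0\ge0$. For regularity, $w_1=w_0\in C^2$ is time independent, so it lies in $C^{2,1}(Q_{T_0})\cap C(\R^{N+2}\times[0,T_0])$.
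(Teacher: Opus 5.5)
Your two-step comparison is the standard argument and is correct in structure: first $w_0\le w(\cdot,t)$, using the stationary subsolution $w_0$ as $w_1$; then $w(\cdot,t)\le w(\cdot,t+h)$ by time translation. The paper gives no proof of its own; it simply cites \cite[Proposition 2.3]{Naib}. Both applications of Proposition~\ref{prp2.1} are set up properly, including taking $w_2$ as the function whose $x\cdot\nabla$ is bounded above.

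The step that does not work as written is your fallback for the lower bound. Comparing $w$ with $0$ through Proposition~\ref{prp2.1} is circular, because that proposition already requires $\sup_{Q_{T_0}}|w|<\infty$, i.e.\ a lower bound for $w$. The statement itself supplies only upper bounds.

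Two-sided boundedness on each strip $\R^{N+2}\times[0,T_0]$ must instead come from the solution concept. Solutions of (\ref{eq1.19}) are tacitly taken to be bounded on finite time strips; this is how Proposition~\ref{prp2.1} is used throughout the paper, e.g.\ for $w_m$ and $\underline{w}$ in Section 5. Alternatively, when $w$ is the averaged mass (\ref{eq1.16}) of a nonnegative $u$, the bound follows directly from $w\ge0$. With that understood, your proof is complete.
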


We prepare the following convergence result by \cite[Proposition 3.1]{Naib}.

\begin{proposition}\label{prp2.3}
Let $w$ be a global in time solution of $(\ref{eq1.19})$.
Assume that $w(x, t)$ is nondecreasing in $t \geq 0$ for 
each fixed $x \in \R^{N+2}$, and put $W(x) = \lim_{t \to \infty}w(x, t)$ 
for each $x \in \R^{N+2}$. 
Then $W \in L^1_{\rm loc}(\R^{N+2})\cap L^2_{\rm loc}(\R^{N+2})$ and 
$W$ solves 
\begin{equation}
	\Delta W + (x\cdot \nabla W)W + NW^2 = 0 
	\label{eq2.3}
\end{equation}
for $x \in \R^{N+2}$ in the sense of distribution. 
Furthermore, if $W \in L^{\infty}(\R^{N+2})$ then $W \in C^2(\R^{N+2})$ 
is a classical solution of $(\ref{eq2.3})$ in $\R^{N+2}$.
\end{proposition}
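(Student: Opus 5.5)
This is Proposition 2.3, the monotone-limit convergence result taken from \cite[Proposition 3.1]{Naib}. The plan is to rewrite the equation in divergence form, pass to the limit in the weak formulation using monotone convergence, and then bootstrap regularity when $W$ is bounded.

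\emph{Divergence form.} Since $\nabla\cdot x = N+2$ in $\R^{N+2}$,
$$
(x\cdot\nabla w)w = \tfrac12 x\cdot\nabla(w^2) = \tfrac12\nabla\cdot(xw^2) - \tfrac{N+2}{2}w^2 .
$$
Hence (\ref{eq1.19}) reads
$$
w_t = \Delta w + \tfrac12\nabla\cdot(xw^2) + \tfrac{N-2}{2}w^2 .
$$
For radial $w$ this is just the mass formulation $r^{N}w$ coming from (\ref{eq1.16}). For a test function $\varphi\in C_c^\infty(\R^{N+2})$ and $t_1<t_2$ we then have
\begin{equation*}
\int w(t_2)\varphi - \int w(t_1)\varphi = \int_{t_1}^{t_2}\!\!\int \Bigl( w\Delta\varphi - \tfrac12 w^2\, x\cdot\nabla\varphi + \tfrac{N-2}{2}w^2\varphi\Bigr)\,dx\,dt .
\end{equation*}

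\emph{Local $L^2$ bounds.} Since $w$ is monotone in $t$, it suffices to bound $\int_{B_R} w^2(x,t)\,dx$ uniformly in $t$. The natural route uses the radial structure: $w(r,t)=r^{-N}m(r,t)$, where $m$ is the mass in the ball of radius $r$ in the original variables. Theorem A gives $w\le 2/r^2$, so $w^2\le 4r^{-4}$, which is integrable near $0$ in dimension $N+2\ge 5$. Thus $w(\cdot,t)$ is bounded in $L^2_{\rm loc}$ uniformly in $t$. If one wants to avoid using the original system, one can instead test the equation with a cutoff $\varphi\ge0$ with $\varphi\equiv1$ on $B_R$. Monotonicity makes $\int w(t)\varphi$ nondecreasing, and bounded if $W\in L^1_{\rm loc}$. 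Integrating in time over $[t,t+1]$, the quadratic terms $\int\!\!\int w^2\bigl(\tfrac{N-2}{2}\varphi-\tfrac12 x\cdot\nabla\varphi\bigr)$ are then controlled; choosing $\varphi$ radially decreasing makes $-x\cdot\nabla\varphi\ge0$, so both terms have a sign. The key step, and the main obstacle, is this uniform local $L^2$ bound; it is where the structure (the bound $w\le 2/r^2$, or the sign argument) really enters. Fatou's lemma then gives $W\in L^1_{\rm loc}\cap L^2_{\rm loc}$.

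\emph{Passing to the limit.} Apply the weak formulation on $[t,t+1]$ and let $t\to\infty$. The left-hand side tends to $0$, because $\int w(t)\varphi$ converges by monotone/dominated convergence: take $\varphi\ge0$ first and split a general $\varphi$ into positive and negative parts. On the right-hand side, $w\to W$ and $w^2\to W^2$ in $L^1_{\rm loc}$ by monotone convergence, using $0\le w\le W$. We obtain
$$
\int\Bigl(W\Delta\varphi - \tfrac12W^2\,x\cdot\nabla\varphi + \tfrac{N-2}{2}W^2\varphi\Bigr)=0 ,
$$
which is the distributional form of (\ref{eq2.3}).

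\emph{Regularity when $W\in L^\infty$.} The equation can be written as $\Delta W = -\tfrac12\nabla\cdot(xW^2) - \tfrac{N-2}{2}W^2$, where the right-hand side is the divergence of an $L^\infty_{\rm loc}$ field plus an $L^\infty$ function. Elliptic $W^{1,p}$ estimates give $W\in W^{1,p}_{\rm loc}$ for all $p<\infty$, hence $W\in C^{0,\alpha}$. The nonlinearity is then $W\,x\cdot\nabla W + NW^2\in L^p_{\rm loc}$, so $W\in W^{2,p}_{\rm loc}\subset C^{1,\alpha}$. Now the right-hand side is $C^{0,\alpha}$, and Schauder theory gives $W\in C^{2,\alpha}$, a classical solution of (\ref{eq2.3}).
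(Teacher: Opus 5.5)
Your passage to the limit in the weak formulation and your elliptic bootstrap are fine. The gap is the first assertion, $W\in L^1_{\rm loc}\cap L^2_{\rm loc}$, which is the heart of the proposition (the paper itself just quotes the result from \cite{Naib}); you do not establish it.

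Your main route invokes Theorem A to get $w\le 2/r^2$. But the proposition concerns an arbitrary global solution of (\ref{eq1.19}) that is nondecreasing in $t$. Such a $w$ need not be radial, need not arise through (\ref{eq1.16}) from a solution of (\ref{eq1.1}), and even then Theorem A needs $0<u_0<U_\infty$. That bound happens to hold in the application in Section 5, where $\underline{w}\le w_m\le 2/r^2$, but it is not a hypothesis here.

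Your alternative route is circular: you declare $\int w(\cdot,t)\varphi$ bounded ``if $W\in L^1_{\rm loc}$'', which is exactly what has to be proved. Given a uniform-in-time bound on $\int w(\cdot,t)\varphi$, your integration over $[t,t+1]$ plus monotonicity does give the $L^2_{\rm loc}$ bound. So the gap is precisely that uniform $L^1$-type bound. Note that your argument never uses that $w$ exists for all $t>0$. That hypothesis, acting through the superlinear term $\frac{N-2}{2}w^2$, is what stops $w$ from growing without bound, so it has to enter somewhere.

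The missing ingredient is a Kaplan-type eigenfunction argument.
\begin{itemize}
\item[(a)] \emph{Setup.} Let $\varphi_R$ be the first Dirichlet eigenfunction of $-\Delta$ in the ball $B_R\subset\R^{N+2}$ centred at $0$, with eigenvalue $\lambda_R$ and $\int_{B_R}\varphi_R\,dx=1$. It is radially decreasing, so $x\cdot\nabla\varphi_R\le 0$, and $\partial_\nu\varphi_R\le 0$ on $\partial B_R$. Monotonicity gives $w\ge w_0\ge 0$.
\item[(b)] \emph{Differential inequality.} Multiply your divergence form of the equation by $\varphi_R$ and integrate over $B_R$. The boundary terms are $-\int_{\partial B_R}w\,\partial_\nu\varphi_R\ge 0$ and $0$ (since $\varphi_R=0$ there), and the drift term $-\frac12\int_{B_R}w^2\,x\cdot\nabla\varphi_R$ is nonnegative. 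Hence $y(t)=\int_{B_R}w(x,t)\varphi_R\,dx$ satisfies
\[
y'(t)\ \ge\ -\lambda_R\,y(t)+\frac{N-2}{2}\int_{B_R}w^2\varphi_R\,dx\ \ge\ -\lambda_R\,y(t)+\frac{N-2}{2}\,y(t)^2 ,
\]
the last step by Cauchy--Schwarz.
\item[(c)] \emph{Using global existence.} Since $N\ge 3$, if $y(t_0)>2\lambda_R/(N-2)$ for some $t_0$, then $y$ becomes infinite in finite time. This is impossible, because $w(\cdot,t)$ is continuous on $\overline{B_R}$ for every $t>0$. Hence $y(t)\le 2\lambda_R/(N-2)$ for all $t$.
\item[(d)] \emph{$L^1_{\rm loc}$.} Since $\varphi_R$ is bounded below on $B_{R/2}$, monotone convergence gives $W\in L^1_{\rm loc}$.
\item[(e)] \emph{$L^2_{\rm loc}$.} Integrate the first inequality over $[t,t+1]$ and use $w(\cdot,s)\ge w(\cdot,t)$ for $s\ge t$. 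This yields $\frac{N-2}{2}\int_{B_R}w(x,t)^2\varphi_R\,dx\le y(t+1)+\lambda_R\int_t^{t+1}y\,ds\le C_R$, hence $W\in L^2_{\rm loc}$.
\end{itemize}
A smooth cutoff $\varphi=\chi^m$, with $\chi$ radially decreasing and $m\ge 4$, also works after absorbing $\int w|\Delta\varphi|$ into $\vep\int w^2\varphi$; either way you still need the blow-up ODE step in (c). With these bounds in hand, the rest of your proposal goes through.
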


Let us consider the following ordinary differential equation
\begin{equation}
	W'' + \frac{N+1}{r}W' + rW'W + NW^2 = 0 
	\quad \mbox{for}\  r > 0.
	\label{eq2.4}
\end{equation}
We denote by $\phi(r; \log\alpha)$ a unique solution of the initial value problem
\begin{equation}
	\left\{
	\begin{array}{c}
	(r^{N-1}\phi')' + r^{N-1}e^{\phi} = 0 \quad \mbox{for} \ r > 0,
	\\[1ex]
	\phi(0) = \log\alpha, \quad \phi'(0) = 0.
	\end{array}
	\right.
	\label{eq2.5}
\end{equation}
It was shown by \cite[Proposition 4.1]{Naib} that   
regular solutions of (\ref{eq2.4}) can be 
represented by the solution $\phi(r, \log \alpha)$ of (\ref{eq2.5}). 

\begin{proposition}\label{prp2.4}
A function $W \in C^2(0, \infty)\cap C^1[0, \infty)$ is a solution of $(\ref{eq2.4})$ 
satisfying $W(0) = \alpha > 0$ if and only if $W(r)$ is given by 
$$
	W(r) = \frac{1}{r^N}\int^r_0 s^{N-1}e^{\phi(s, \log (N\alpha))}ds 
	\quad \mbox{for} \ r > 0
$$
and $W(0) = \alpha$, 
where $\phi(r, \log \alpha)$ is the solution of $(\ref{eq2.5})$.
Furthermore, the solution $W$ satisfies 
$W \in C^2[0, \infty)$, $W'(0) = 0$ and $W'(r) \leq 0$ for $r \geq 0$. 
\end{proposition}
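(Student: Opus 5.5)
The plan is to pass from $W$ to the associated density $U(r) = r^{1-N}(r^N W(r))' = NW(r) + rW'(r)$. This mirrors (\ref{eq1.16}): $W$ is the averaged mass of $U$, so $U$ plays the role of the stationary density. The key observation is an exact algebraic identity. Since
$$
U' = (N+1)W' + rW'',
$$
multiplying (\ref{eq2.4}) by $r$ gives
$$
rW'' + (N+1)W' + r^2W'W + NrW^2 = U' + rW\,(rW' + NW) = U' + rWU .
$$
Hence, for $r>0$, (\ref{eq2.4}) is equivalent to the first order relation $U' = -rWU$.

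For the ``only if'' direction, let $W \in C^2(0,\infty)\cap C^1[0,\infty)$ solve (\ref{eq2.4}) with $W(0)=\alpha>0$.
\begin{itemize}
\item Then $U = NW + rW'$ is continuous on $[0,\infty)$ with $U(0) = N\alpha$, and $U$ is $C^1$ on $(0,\infty)$.
\item Integrating the linear equation $U' = -rWU$ gives $U(r) = N\alpha \exp\bigl(-\int_0^r sW(s)\,ds\bigr) > 0$. So $\phi := \log U$ is well defined and satisfies $\phi' = -rW$.
\item By definition of $U$, $r^N W(r) = \int_0^r s^{N-1}U(s)\,ds$, because $r^NW(r)\to 0$ as $r\to 0$. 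Therefore
$$
r^{N-1}\phi'(r) = -r^N W(r) = -\int_0^r s^{N-1}e^{\phi(s)}\,ds .
$$
This is the integrated form of (\ref{eq2.5}), with $\phi(0) = \log(N\alpha)$. It also gives $\phi'(r) = -rW(r) \to 0$ as $r\to 0$, so $\phi'(0) = 0$.
\item By uniqueness for the regular initial value problem (\ref{eq2.5}), which is standard via the integral equation $\phi(r) = \log(N\alpha) - \int_0^r \rho^{1-N}\int_0^\rho s^{N-1}e^{\phi(s)}\,ds\,d\rho$ and a contraction on a short interval, we get $\phi = \phi(\cdot, \log(N\alpha))$. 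The representation formula for $W$ follows.
\end{itemize}

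For the ``if'' direction, I would reverse these steps. Given $\phi = \phi(\cdot,\log(N\alpha))$, define $W$ by the formula and set $U = e^{\phi}$. Then $r^{N-1}(r^NW)' = U$, and the integrated form of (\ref{eq2.5}) gives $\phi' = -rW$, that is, $U' = -rWU$. The identity above then shows that (\ref{eq2.4}) holds for $r>0$, and $W(0) = e^{\phi(0)}/N = \alpha$.

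For regularity, the regular solution $\phi$ is $C^2[0,\infty)$ (in fact smooth and even in $r$) with $\phi'(0)=0$. The substitution $s = r\tau$ gives $W(r) = \int_0^1 \tau^{N-1}e^{\phi(r\tau)}\,d\tau$. This yields $W\in C^2[0,\infty)$ and $W'(0) = \int_0^1 \tau^N \phi'(0)e^{\phi(0)}\,d\tau = 0$.

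For monotonicity, $W>0$, so $\phi' = -rW < 0$ for $r>0$ and $\phi$ is decreasing. Hence $e^{\phi(s)} \ge e^{\phi(r)}$ for $s\le r$, which gives $NW(r) = Nr^{-N}\int_0^r s^{N-1}e^{\phi(s)}\,ds \ge e^{\phi(r)} = U(r)$. Therefore $rW' = U - NW \le 0$.

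The main obstacle I expect is the behaviour at $r=0$ in the ``only if'' direction. One has to justify, from only $W\in C^1[0,\infty)$, that $U$ extends continuously with $U(0) = N\alpha$, that $\phi'(0)=0$, and that there is no boundary term in $r^NW = \int_0^r s^{N-1}U$. I would handle this by working with the integrated identities above and letting $r\to 0$, rather than differentiating at the origin.
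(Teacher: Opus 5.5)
Your proof is correct. The paper gives no argument of its own for this statement; it quotes it from \cite[Proposition 4.1]{Naib}. So the only comparison available is with how the paper treats these objects elsewhere.

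Your substitution $U = NW + rW'$, which turns (\ref{eq2.4}) into $U' = -rWU$, is exactly the stationary case of the reduction (\ref{eq1.16})--(\ref{eq1.17}). There the zero-flux relation is $U' = UV'$ with $V' = -r^{1-N}\int_0^r s^{N-1}U\,ds = -rW$. It is also consistent with the identification (\ref{eq3.11}) of $e^{\phi}$ as the stationary density whose averaged mass is $W$. Your argument therefore gives a self-contained proof of the cited result.

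Three small remarks:
\begin{itemize}
\item In your ``if'' paragraph the relation should read $(r^NW)' = r^{N-1}U$, not $r^{N-1}(r^NW)' = U$.
\item The point you flag as the main obstacle is already settled by your own steps. First, $W\in C^1[0,\infty)$ gives continuity of $U$ at $0$ with $U(0)=N\alpha$. Second, the first integral $U(r)\,e^{\int_0^r sW(s)\,ds}\equiv N\alpha$ gives $U>0$ without assuming $W>0$. Third, $\phi'(r) = -rW(r)\to 0$, together with the mean value theorem, gives $\phi\in C^1[0,\infty)$ with $\phi'(0)=0$. Hence $\phi$ is a classical solution of (\ref{eq2.5}).
\item For uniqueness in (\ref{eq2.5}), combine the contraction argument near $r=0$ with standard ODE uniqueness on $[r_0,\infty)$, where the equation is regular.
\end{itemize}
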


Proposition \ref{prp2.4} implies that $x\cdot \nabla W(|x|) = rW_r(r) \leq 0$ 
for $r = |x|$ with $x \in \R^{N+2}$. 
Applying Proposition \ref{prp2.1} with solutions of (\ref{eq2.3}) and (\ref{eq1.19}), 
we obtain the following result.

\begin{corollary}\label{cor2.5}
Let $W \in C^2(\R^{N+2})$ be a positive radial solution of $(\ref{eq2.3})$ in $\R^{N+2}$, 
and let $w$ be a solution of $(\ref{eq1.19})$. 
If $w_0(x) \leq W(|x|)$ in $\R^{N+2}$, then $w(x, t)$ exists globally in time and 
satisfies $w(x, t) \leq W(|x|)$ for all $(x, t) \in (\R^{N+2}\times [0, \infty))$.
\end{corollary}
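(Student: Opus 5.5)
The argument applies the comparison principle, Proposition \ref{prp2.1}, with $w_1 = w$ and $w_2 = W$. First I record the properties of $W$ that the principle requires.

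Since $W$ is a positive radial $C^2$ solution of (\ref{eq2.3}), its profile $W(r)$ solves (\ref{eq2.4}) with $W(0)=\alpha>0$. By Proposition \ref{prp2.4}, $W$ is then given by the averaged-mass formula, so
$$0< W(r)\le W(0)=\alpha \qquad\text{and}\qquad x\cdot\nabla W(|x|)=rW'(r)\le 0 .$$
Hence $W$ is bounded, and $x\cdot\nabla W$ is bounded above by $0$. Because $W$ is stationary, it satisfies $\partial_t W-\Delta W-(x\cdot\nabla W)W-NW^2=0$, while $w$ satisfies the same equation with right-hand side zero. So the differential inequality in Proposition \ref{prp2.1} holds with equality.

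The comparison step runs on a fixed interval. Let $T_0>0$ be a time up to which $w$ is a bounded classical solution; local existence follows from \cite[Appendix A]{Naib} applied to $u$, transferred to $w$ through (\ref{eq1.16}). On $Q_{T_0}$ both functions are bounded, and $x\cdot\nabla w_2$ is bounded above. Since $w_0\le W$, Proposition \ref{prp2.1} gives $w\le W$ on $Q_{T_0}$.

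Global existence then follows by continuation. On any existence interval we have the uniform bound $w\le W\le\alpha$, and $w\ge0$ because $u>0$ (or because the constant $0$ is a solution, by comparison). So $\|w(\cdot,t)\|_\infty\le \alpha$ does not depend on $t$.

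The step I expect to be the main obstacle is turning this bound into continuation of the solution. A bound on $w$ controls the average of $u$, not $\|u\|_\infty$ directly. The quickest route is to cite Theorem A (Winkler's global existence), assuming the data fit its hypotheses; in general I would argue through the local theory for (\ref{eq1.19}). Its existence time depends only on $\|w\|_\infty$, together with the extra regularity needed for $x\cdot\nabla w$, which one gets from parabolic estimates. Restarting from time $T_0$ then extends the solution by a fixed amount of time repeatedly, and each time the comparison with $W$ applies again from the new initial time, since $w(\cdot,T_0)\le W$. Iterating yields global existence and $w(x,t)\le W(|x|)$ for all $t\ge0$.
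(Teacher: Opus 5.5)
Your proposal is correct and follows the paper's argument. Proposition \ref{prp2.4} gives that $W$ is bounded with $x\cdot\nabla W(|x|)=rW'(r)\le 0$, and Proposition \ref{prp2.1} with $w_1=w$, $w_2=W$ then yields $w\le W$. The paper does not spell out the continuation to global existence (in its application, Theorem A already supplies it), so your treatment of that step only adds detail the paper leaves implicit.
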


We recall some properties of $w$ defined by (\ref{eq1.16}).
(See \cite[Proposition 3.2]{SoWi} and \cite[Lemma 5.1]{Naib}.)  

\begin{lemma}\label{lem2.6}
Let $(u, v)$ be a solution of $(\ref{eq1.1})$ for $0 \leq t < T \leq \infty$, 
and define $w(r, t)$ by $(\ref{eq1.16})$. 
Then $w(0, t) = u(0, t)/N$ for $t \geq 0$. 
Assume in addition that $u_0(r)$ is nonincreasing.  
Then $w_r(r, t) \leq 0$ for $r \geq 0$ and $0 < t < T$.
\end{lemma}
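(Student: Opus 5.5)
The first assertion follows from the definition (\ref{eq1.16}) and the continuity of $u$. The second follows by showing that radial monotonicity of $u$ is preserved in time and then reading off the sign of $w_r$. The main effort is a maximum principle for $u_r$.

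\emph{Step 1 (value at the origin).} Substituting $s=r\sigma$ in (\ref{eq1.16}) gives
$$
	w(r,t)=\int_0^1\sigma^{N-1}u(r\sigma,t)\,d\sigma .
$$
Since $u(\cdot,t)$ is continuous on $[0,\infty)$ and bounded on $[0,T']$ for each $T'<T$, dominated convergence gives $w(r,t)\to u(0,t)\int_0^1\sigma^{N-1}d\sigma=u(0,t)/N$ as $r\to0$. This is the value $w(0,t)$, consistent with (\ref{eq1.18}) at $t=0$.

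\emph{Step 2 (reduction).} The same representation shows that if $u(\cdot,t)$ is nonincreasing in $r$, then $w(\cdot,t)$ is nonincreasing, since each integrand $u(r\sigma,t)$ is. So it suffices to prove $u_r(r,t)\le0$ for $r\ge0$ and $0<t<T$.

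\emph{Step 3 (equation for $z=u_r$).} For radial solutions we have $v_r=-rw$ and $\Delta v=-u$. Hence the first equation of (\ref{eq1.1}) becomes
$$
	u_t=u_{rr}+\frac{N-1}{r}u_r-v_ru_r+u^2 .
$$
Differentiating in $r$ gives, for $z=u_r$,
$$
	z_t=z_{rr}+\Big(\frac{N-1}{r}-v_r\Big)z_r+\Big(-\frac{N-1}{r^2}-v_{rr}+2u\Big)z,
$$
with $z(0,t)=0$ by radial smoothness. This is linear in $z$. On any $[0,T']$ with $T'<T$, the coefficients $v_r=-rw$ (times $1/r$ where needed), $v_{rr}=-u+(N-1)w$ and $u$ are bounded, because $u$ is bounded. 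The singular term $-\frac{N-1}{r^2}z$ has the favourable sign: at a positive interior maximum of $z$ it is negative. Let $c$ bound $-v_{rr}+2u$ on $[0,T']$ and put $\tilde z=e^{-(c+1)t}z$. At a positive interior maximum of $\tilde z$, the time derivative would have to be $\ge 0$ while the equation makes it strictly negative, a contradiction. Hence $\tilde z\le0$, provided $z(\cdot,0)\le0$, $z(0,t)=0$, and positive values cannot escape to $r=\infty$. Equivalently, one may work with $\nabla u\cdot x/|x|$ in $\R^N$ to avoid the origin entirely.

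\emph{Main obstacle.} The difficulty is justifying this maximum principle rigorously.

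(a) \emph{Behaviour at spatial infinity.} Handle this by applying the argument to $\tilde z-\vep(1+r^2)e^{Kt}$ with large $K$ and letting $\vep\to0$. This needs a bound on $z$, for instance from parabolic regularity, which gives $u_r$ bounded on $[\delta,T']$.

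(b) \emph{Behaviour at $t=0$.} Here $u_0$ is only continuous, so $u_r(\cdot,0)\le0$ holds only in a weak sense. I would approximate $u_0$ by smooth, radial, nonincreasing data $u_{0,k}\to u_0$ locally uniformly with uniform $L^\infty$ bounds, for example by mollifying radially, which preserves monotonicity. I would then apply the argument to the corresponding smooth solutions and pass to the limit using continuous dependence on the data, which follows from the local existence theory in \cite[Appendix A]{Naib}. The sign $u_r\le0$ survives the limit, and Step 2 then gives $w_r\le0$.
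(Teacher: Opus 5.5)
Your argument is correct, and it is the standard way to establish this fact. The paper gives no proof of its own and only cites \cite[Proposition 3.2]{SoWi} and \cite[Lemma 5.1]{Naib}. Your route gets the value at the origin from $w(r,t)=\int_0^1\sigma^{N-1}u(r\sigma,t)\,d\sigma$. It then shows radial monotonicity of $u$ is preserved, via the maximum principle for $z=u_r$, where the singular term $-\frac{N-1}{r^2}z$ has the favourable sign, and finally reads off $w_r\le 0$.

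Two small remarks. First, since $u_0$ is continuous, bounded and nonincreasing, it is uniformly continuous, so your radial mollifications converge uniformly and not merely locally uniformly; uniform convergence is what continuous dependence in $L^\infty$ needs. Second, your aside about $\nabla u\cdot x/|x|$ does not actually avoid the origin, since that function is just $u_r$ again. Working with $\zeta=\partial_{x_1}u$ on the half-space $\{x_1>0\}$ would avoid it: for radial $u,v$ one has $\nabla\partial_{x_1}v\cdot\nabla u=v_{rr}\,\partial_{x_1}u$. Hence $\zeta_t=\Delta\zeta-\nabla v\cdot\nabla\zeta+(2u-v_{rr})\zeta$ with $\zeta=0$ on $\{x_1=0\}$, an equation with no singular coefficient.
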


Using parabolic a priori estimates for  the one-dimensional linear parabolic equations 
(see, for instance, \cite[Theorem 11.1 in Chapter III]{LSU}),
 we obtain the following proposition.

\begin{proposition}\label{prp2.7}
Let $(u, v)$ be a radial global solution of $(\ref{eq1.1})$ with $u_0$ satisfying $(\ref{eq1.4})$.  
Assume that $w(r, t)$, defined by $(\ref{eq1.16})$, 
is uniformly bounded for $(r, t) \in [0, \infty)\times [0, \infty)$. 
Then the solution $u(r, t)$ is also uniformly bounded for $(r, t) \in [0, \infty)\times [0, \infty)$. 
\end{proposition}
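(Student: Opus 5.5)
The plan is to recover $u$ from $w$ through the identity $u = Nw + rw_r$, which follows from (\ref{eq1.16}) because $(r^N w)_r = r^{N-1}u$. We will get a uniform bound on $u$ from a uniform bound on $w$ together with parabolic regularity for the one-dimensional radial equation.

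\emph{Step 1: the equation for $u$.} Writing the first equation of (\ref{eq1.1}) in radial variables, and using $v_r = -rw$ and $-\Delta v = u$, we obtain
\begin{equation*}
u_t = u_{rr} + \frac{N-1}{r}u_r + rw\,u_r + u^2 .
\end{equation*}
Equivalently, viewed in $\R^N$, this is the linear equation $u_t = \Delta u + b(x,t)\cdot\nabla u + c(x,t)u$ with drift $b = xw$ and potential $c = u$. Near $r=0$ we will work in Cartesian coordinates in $\R^N$, where $b$ is bounded as long as $w$ is. For $r \ge 1$ we will use the one-dimensional form, in which $\frac{N-1}{r}$ is bounded.

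\emph{Step 2: $u$ is bounded in terms of $w$ away from the origin.} Theorem A (inequality (\ref{eq1.5})) gives $w(r,t) \le 2/r^2$. For large $r$ this already controls the drift $rw \le 2/r$. To bound $u$ itself at large $r$, we will take the coefficients as known bounded functions, $|rw| \le C$ and $w \le M := \sup w$, and apply the local estimates of \cite[Thm.~11.1, Ch.~III]{LSU} on unit cylinders $(R-1,R+1)\times(\tau-1,\tau+1)$. The equation for $w$, (\ref{eq1.17}), has coefficients $\frac{N+1}{r} + rw$ that are bounded on $r \ge 1$, and its zero-order term satisfies $Nw^2 \le NM^2$. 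The $L^\infty$ bound on $w$ therefore yields a uniform bound on $w_r$ in every such cylinder. Combined with $u = Nw + rw_r$ and $rw_r = r^{-N}\!\int_0^r s^{N-1}u\,ds \cdot(\ldots)$, we will handle large $r$ more directly: (\ref{eq1.5}) implies $|rw_r| \le Nw + u$, so it suffices to bound $u$ near $r\in[R-1,R+1]$. For this we use the gradient estimate on $w$, which bounds $rw_r$ by $C(1+r)\,\|w_r\|_{L^\infty}$. Because $r$ is unbounded, we will instead apply the estimate in the rescaled variable $\rho = r/R$, for which the equation keeps bounded coefficients thanks to $w \le 2/r^2$. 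This step yields $|rw_r| \le C$ uniformly for $r \ge 1$ and $t \ge 1$, hence $u = Nw + rw_r$ is bounded there.

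\emph{Step 3: the region $r \le 1$.} Here we will regard $w$ as a radial solution in $\R^{N+2}$ of $w_t = \Delta w + (x\cdot\nabla w)w + Nw^2$ and bootstrap. Since $w$ is bounded, standard $L^p$ and Schauder interior estimates on unit cylinders give $\nabla w$ bounded for $|x| \le 2$ and $t \ge 1$. Then $u = Nw + rw_r$ is bounded on $r \le 1$, $t \ge 1$. On $t \in [0,1]$ boundedness follows from the local existence theory, since $u$ belongs to $L^\infty(0,T_0;L^\infty)$ and can be continued with a bound depending only on $\|u_0\|_\infty$.

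The main obstacle is Step 2: obtaining uniformity for large $r$, where the weight $r$ multiplies $w_r$. We will handle it by the scaling $\tilde w(\rho,s) = R^2 w(R\rho, R^2 s)$, which preserves (\ref{eq1.17}). Inequality (\ref{eq1.5}) then gives $\tilde w \le 2/\rho^2$, which is bounded on $\rho\in[1/2,2]$ independently of $R$. The interior gradient bound $|\tilde w_\rho| \le C$ translates back into $|rw_r| \le C/r^2$, which suffices.
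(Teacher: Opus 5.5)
Your Step 3 (the region $r\le 1$, via interior estimates for the radial equation in $\R^{N+2}$, where the drift $xw$ is bounded) is fine, and is a reasonable alternative to the paper's rescaling $z(\rho,\tau)=w((\rho+1)r,t+r^2\tau)$. The gap is in Step 2, where the real difficulty lies. The rescaling $\tilde w(\rho,s)=R^2w(R\rho,R^2s)$ does preserve (\ref{eq1.17}), and it turns (\ref{eq1.5}) into $\tilde w\le 2/\rho^2$. However, it sends the point $(R,t)$ to $(1,t/R^2)$. A bound $|\tilde w_\rho(1,s_0)|\le C$ with $C$ independent of $R$ and $t$ needs a backward cylinder $[1/2,3/2]\times[s_0-\sigma,s_0]$ of fixed height $\sigma>0$, i.e.\ it needs $t\ge\sigma R^2$. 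Now consider the region $1\le t<\sigma r^2$, which for every fixed $t$ contains all large $r$. There the rescaled time window has length $t/R^2\to 0$, the interior constant degenerates like $(t/R^2)^{-1/2}$, and your bound $|rw_r|\le C/r^2$ is not available. You also supply no estimate up to $s=0$, which would need separate justification. So, as written, Step 2 does not give $|rw_r|\le C$ uniformly on $\{r\ge 1,\ t\ge 1\}$.

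The missing idea is the one the paper uses. Work on cylinders of unit size in the original variables, $z(\rho,\tau)=w(r+\rho,t+\tau)$, and regard the equation as \emph{linear}: $z_\tau=z_{\rho\rho}+Az_\rho+Bz$ with $A=\frac{N+1}{r+\rho}+(r+\rho)z$ and $B=Nz$. By (\ref{eq1.5}), $0\le(r+\rho)z\le 2/(r+\rho)\le 4$ for $r\ge1$ and $|\rho|\le 1/2$. Hence $A$ and $B$ are bounded independently of $(r,t)$.

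Now divide $z$ by its local supremum $M_w(r,t)$ over $[r-1,r+1]\times[t-1,t]$ and apply the interior estimate to the normalized function. This gives $|w_r(r,t)|\le C\,M_w(r,t)\le 2C/(r-1)^2$. Hence $|rw_r|\le 4C$ for $r\ge 2$, and $|rw_r|\le 2C\sup w$ for $1\le r\le 2$. This holds for every $t\ge 1$, regardless of how $t$ compares with $r^2$.

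The decay comes from this normalization by the local supremum, which is $O(r^{-2})$. An unnormalized gradient bound only gives $|rw_r|\le Cr$, as you noticed yourself. You could also repair your own argument by rescaling the short cylinder once more and tracking the loss $(t/R^2)^{-1/2}$; this yields $|rw_r|\le C/(r\sqrt t)$ for $t\le r^2/2$, but it is the same argument in disguise.

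Finally, drop the intermediate claim that (\ref{eq1.5}) implies $|rw_r|\le Nw+u$, followed by ``it suffices to bound $u$''. That inequality comes from $u=Nw+rw_r$ and positivity, not from (\ref{eq1.5}). Using it to reduce the problem to bounding $u$ is circular.
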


\begin{proof}
Since $u_0(|x|)$ belongs to $C(\R^N)\cap L^{\infty}(\R^N)$, 
the solution $u$ of (1.1) exists and is bounded on $\R^N\times [0, a]$ 
with some $a > 0$ 
by \cite[Appendix A]{Naib} (see also \cite[Proposition 1.1]{Winb}). 
Let $t \geq a$ and $0 < r \leq 1$ be fixed, and define
$$
	z(\rho, \tau) = w((\rho + 1)r, t + r^2\tau) \quad 
	\mbox{for} \ (\rho, \tau) \in (-1, 1)\times (-a, a).
$$
A straightforward calculation yields that 
$$
	z_{\tau} = z_{\rho\rho}+ \frac{N+1}{\rho +1}z_{\rho} 
	+ r^2z\left((\rho + 1)z_{\rho} + Nz\right) 
	\quad \mbox{for} \ (\rho, \tau) \in (-1, 1)\times (-a, a).
$$
Define $M_w = \sup_{t \geq 0}\|w(\cdot, t)\|_{L^{\infty}[0, \infty)}$. 
Then $Z(\rho, \tau) = z(\rho, \tau)/M_w$ satisfies $|Z(\rho, \tau)| \leq 1$ and 
\begin{equation}
	Z_{\tau} = Z_{\rho\rho} + a(\rho, \tau; r)Z_{\rho} + b(\rho, \tau; r)Z
	\quad \mbox{for} \ (\rho, \tau) \in (-1, 1)\times (-a, a),
	\label{eq2.6}
\end{equation}
where 
$$
	a(\rho, \tau; r) = \frac{N+1}{\rho+1} + r^2(\rho+1)z(\rho, \tau)
	\quad \mbox{and} \quad 
	b(\rho, \tau; r) = Nr^2z(\rho, \tau).
$$
Since $|z(\rho, \tau)| \leq M_w$ and $r^2 \leq 1$, we have 
$$
	|a(\rho, \tau; r)| \leq 2(N+1) + \frac{3}{2}M_w 
	\quad \mbox{and} \quad 
	|b(\rho, \tau; r)| \leq NM_w 
$$
for $(\rho, \tau) \in [-1/2, 1/2]\times [-a/2, a/2]$.
By \cite[Theorem 11.1 in Chapter III]{LSU}, we deduce that 
\begin{equation}
	|Z_{\rho}(0, 0)| \leq 
	\sup\{|Z_{\rho}(\rho, \tau)|: (\rho, \tau) \in [-1/4, 1/4]\times [-a/4, a/4]\} 
	\leq C
	\label{eq2.7}
\end{equation}
with a constant $C = C(N, M_w) > 0$ which is independent of $0 \leq r \leq 1$ and $t \geq a$.
Since $|Z_{\rho}(0, 0)| = |z_{\rho}(0, 0)|/M_w = |rw_r(r, t)|/M_w$, 
we obtain $|rw_r(r, t)| \leq C M_w$ for $0 \leq r \leq 1$ and $t \geq a$. 

Next, let $t \geq a$ and $r \geq 1$ be fixed, and define $z(\rho, \tau)$ and $M_w(r, t)$ 
respectively by 
$$
	z(\rho, \tau) = w(r + \rho, t +\tau) \quad 
	\mbox{for} \ (\rho, \tau) \in (-1, 1)\times (-a, a)
$$
and 
$$
	M_w(r, t) = \sup\{w(\tilde{r}, \tilde{t}): r-1 \leq \tilde{r} \leq r+1, t-a \leq \tilde{t} \leq t+a\}.
$$ 
Then $Z(\rho, \tau) = z(\rho, \tau)/M_w(r, t)$ satisfies $|Z(\rho, \tau)| \leq 1$ and 
(\ref{eq2.6}) with 
$$
	a(\rho, \tau; r) = \frac{N+1}{\rho+r} + (\rho+r)z(\rho, \tau)
	\quad \mbox{and} \quad 
	b(\rho, \tau; r) = Nz(\rho, \tau).
$$
Since $u_0$ in (\ref{eq1.1}) satisfies (\ref{eq1.4}), 
we obtain $w(r, t) \leq 2/r^2$ for $r > 0$ and $t > 0$ by Theorem A. 
This implies that 
$$
	|z(\rho, \tau)| \leq \frac{2}{(r+ \rho)^2} \quad 
	\mbox{for} \ (\rho, \tau) \in (-1, 1)\times (-a, a).
$$
From $r \geq 1$, it follows that 
$$
	|(\rho+r)z(\rho, \tau)| \leq \frac{2}{(r+ \rho)} \leq 4 \quad 
	\mbox{for} \ (\rho, \tau) \in [-1/2, 1/2]\times [-a/2, a/2].
$$
Then we obtain 
$|a(\rho, \tau; r)| \leq 2(N+1) + 4$ and $|b(\rho, \tau; r)| \leq NM_w$ 
for $(\rho, \tau) \in [-1/2, 1/2]\times [-a/2, a/2]$. 
By \cite{LSU}, we obtain (\ref{eq2.7}) 
with a constant $C = C(N, M_w) > 0$ which is independent of $r \geq 1$ and $t \geq a$. 
Since $Z_{\rho}(0, 0) = z_{\rho}(0, 0)/M_w(r, t) = w_r(r, t)/M_w(r, t)$, we obtain 
$$
	|rw_r(r, t)| \leq C rM_w(r, t) 
	\quad \mbox{for} \ r \geq 1,  \ t \geq a.
$$
Recall that $w$ satisfies $w(r, t) \leq M_w$ and $w(r, t) \leq 2/r^2$ for $r > 0$ and $t > 0$. 
Then 
$rM_w(r, t) \leq 2M_w$ for $1 \leq r \leq 2$, $t \geq a$ and  
$$
	rM_w(r, t) \leq \frac{2r}{(r-1)^2} \quad \mbox{for} \ r \geq 2, \ t \geq a.
$$
Then we obtain $|rw_r(r, t)| \leq C$ for $r \geq 1$ and $t \geq a$ with some constant $C > 0$.

Differentiating (\ref{eq1.16}) by $r > 0$, we have 
$$
	Nw(r, t) + rw_r(r, t) = u(r, t) \quad \mbox{for} \ r > 0, \ t > 0.
$$
Then we obtain $u(r, t) \leq NM_w +C$ for $r > 0$ and $t \geq a$.
Thus we conclude that 
$u(r, t)$ is uniformly bounded for $(r, t) \in [0, \infty) \times [0, \infty)$.
\end{proof}


\section{Asymptotic expansion of solutions to the Liouville equation}

Note that the equation in (\ref{eq2.5}) is invariant under the scaling transformation 
$$
	\phi_{\lam}(r) = \phi(\sqrt{\lam} r) + \log \lam \quad \mbox{for} \ \lam > 0.
$$
By the scaling property, the solution $\phi(r, \log \alpha)$ of (\ref{eq2.5}) satisfies
\begin{equation}
	\phi(r, \log \alpha) = \phi(\sqrt{\alpha}r, 0) + \log \alpha
	\quad \mbox{for} \ \alpha > 0.
	\label{eq3.1}
\end{equation}
When $N \geq 3$, the equation in (\ref{eq2.5}) has a singular solution 
\begin{equation}
	\phi_{\infty}(r) = -2\log r + \log(2N-4) \quad \mbox{for} \  r > 0.
	\label{eq3.2}
\end{equation}
By the phase plane argument(see, e.g., the proof of Theorem 1 in \cite{Tel}), 
for each $\alpha > 0$ we have  
\begin{equation}
	\phi(r, \log \alpha) \to \phi_{\infty}(r) \quad \mbox{as} \ r \to \infty.
	\label{eq3.3}
\end{equation}
We also obtain 
\begin{equation}
	\lim_{\alpha \to \infty}\phi(r, \log \alpha) = \phi_{\infty}(r)
	\quad \mbox{uniformly on $[r_0, \infty)$} 
	\label{eq3.4}
\end{equation}
for any $r_0 > 0$. 
In fact, from (\ref{eq3.1}) and (\ref{eq3.2}), we have 
$$
	\phi(r, \log \alpha) - \phi_{\infty}(r) = 
	\phi(\sqrt{\alpha}r, 0) + \log \alpha -\phi_{\infty}(r) = 
	\phi(\sqrt{\alpha}r, 0) - \phi_{\infty}(\sqrt{\alpha}r).
$$ 
By (\ref{eq3.3}) we obtain (\ref{eq3.4}). 

In the case $N \geq 10$ it was shown by \cite[Theorem 1.1]{Tel} that, if $0 < \alpha < \beta$, then 
\begin{equation}
	\phi(r, \log \alpha) < \phi(r, \log \beta) < \phi_{\infty}(r)  \quad \mbox{for} \ r > 0.
	\label{eq3.5}
\end{equation}
Furthermore, we obtain the following. 

\begin{proposition}\label{prp3.1}
The solution $\phi(r, \log \alpha)$ of $(\ref{eq2.5})$ satisfies
\begin{equation}
	\phi(r, \log \alpha) = \phi_{\infty}(r) -\ell_N(\alpha)r^{-\lam_0} + o(r^{-\lam_0}) 
	\quad \mbox{as} \ r \to \infty 
	\label{eq3.6}
\end{equation}
if $N \geq 11$, and
\begin{equation}
	\phi(r, \log \alpha) = \phi_{\infty}(r) -\ell_N(\alpha)r^{-\lam_0}\log r
	+ o(r^{-\lam_0}\log r)
	\quad \mbox{as} \ r \to \infty
	\label{eq3.7}
\end{equation}
if $N = 10$, where $\ell_N(\alpha)$ is a positive constant.
In the both $(\ref{eq3.6})$ and $(\ref{eq3.7})$, 
the constant $\ell_N(\alpha)$ satisfies 
\begin{equation}
	\ell_N(\alpha) = \alpha^{-\frac{\lam_0}{2}}\ell_N(1) \quad \mbox{for} \ \alpha > 0. 
	\label{eq3.8}
\end{equation}
\end{proposition}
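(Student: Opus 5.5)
The plan is to pass to Emden--Fowler variables and treat (\ref{eq3.6})--(\ref{eq3.7}) as a statement about the decay of a solution of an autonomous second order ODE near a stable equilibrium. Set $s=\log r$ and
$$
	\psi(s)=\phi(e^s,\log\alpha)-\phi_{\infty}(e^s).
$$
Using (\ref{eq2.5}) and (\ref{eq3.2}), a direct computation gives
$$
	\psi''+(N-2)\psi'+(2N-4)(e^{\psi}-1)=0 \quad \mbox{for} \ s\in\R .
$$
By (\ref{eq3.3}) we have $\psi(s)\to 0$ as $s\to\infty$, and by (\ref{eq3.5}) we have $\psi<0$.

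The linearization at $0$ is $\psi''+(N-2)\psi'+(2N-4)\psi=0$. Its characteristic roots are $-\lam_0$ and $-\lam_1$, where
$$
	\lam_{0,1}=\frac{N-2\mp\sqrt{(N-2)(N-10)}}{2}.
$$
For $N\ge 11$ these are real and distinct with $0<\lam_0<\lam_1$. For $N=10$ there is a double root $\lam_0=\lam_1=4$. In particular $e^{-\lam_0 s}=r^{-\lam_0}$, and for $N=10$ the second fundamental solution is $se^{-\lam_0 s}=r^{-\lam_0}\log r$. This already explains the shape of (\ref{eq3.6}) and (\ref{eq3.7}).

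The first step is a preliminary decay bound. Write the equation as
$$
	\psi''+(N-2)\psi'+(2N-4)\psi=-f, \qquad f=(2N-4)(e^{\psi}-1-\psi)=O(\psi^2).
$$
Since $\psi\to0$ and the origin is a hyperbolic sink (a node), standard ODE asymptotics give $|\psi|+|\psi'|\le C_\vep e^{-(\lam_0-\vep)s}$ for every $\vep>0$. One can get this via a Gronwall argument on the first order system in the eigenbasis, or by Hartman's linearization theorem.

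The second step is variation of constants on $[s_0,\infty)$, which yields
$$
	\psi(s)=c_1e^{-\lam_0 s}+c_2e^{-\lam_1 s}+\int_{s_0}^{s}K(s-\sigma)f(\sigma)\,d\sigma
$$
(with $e^{-\lam_1 s}$ replaced by $se^{-\lam_0 s}$ when $N=10$). Here $K(\tau)=(e^{-\lam_0\tau}-e^{-\lam_1\tau})/(\lam_1-\lam_0)$, or $K(\tau)=\tau e^{-\lam_0\tau}$ when $N=10$. Since $f=O(e^{-2(\lam_0-\vep)s})$, I will split the convolution term as follows. The part of the integral coming from $e^{-\lam_0(s-\sigma)}$ is rewritten as $e^{-\lam_0 s}\int_{s_0}^\infty e^{\lam_0\sigma}f\,d\sigma$ minus a tail of order $O(e^{-2(\lam_0-\vep)s})$. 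The part coming from $e^{-\lam_1(s-\sigma)}$ is $O(e^{-\min(\lam_1,2\lam_0-2\vep)s})$. Both remainders are $o(e^{-\lam_0 s})$.

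This gives, for $N\ge 11$,
$$
	\psi(s)=-\ell e^{-\lam_0 s}+o(e^{-\lam_0 s})
$$
for some constant $\ell$. For $N=10$ the same bookkeeping with $K(\tau)=\tau e^{-\lam_0\tau}$ gives $\psi=-\ell se^{-\lam_0 s}+O(e^{-\lam_0 s})$. Since $\psi<0$, necessarily $\ell\ge0$.

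The main obstacle is the third step, proving $\ell>0$: I must exclude that the regular trajectory lies on the strong stable manifold, where $\psi=O(e^{-\lam_1 s})$ for $N\ge 11$, or $\psi=O(e^{-\lam_0 s})$ without the logarithm for $N=10$. The first thing I would try is a phase plane argument. In the $(\psi,\psi')$ plane, the strong stable direction of the node is the line $\psi'=-\lam_1\psi$, and generic trajectories enter along the slow direction $\psi'=-\lam_0\psi$. The regular solution starts at $s=-\infty$ from $(-\infty,-2)$ since $\phi\to\log\alpha$ and $\phi_\infty\sim-2s$. I would show it stays in a positively invariant region bounded by the slow eigenline, for instance $\{\psi<0,\ 0<\psi'<-\lam_0\psi\}$ or a nonlinear modification of it. I would check invariance on each boundary piece using the equation and $\psi<0$. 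Once the trajectory is known to satisfy $\psi'/\psi\to-\lam_0$ rather than $-\lam_1$, we get $\ell\neq0$.

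An alternative for this step uses the ordering (\ref{eq3.5}). The function $z=\partial_{\log\alpha}\psi$ is positive and solves the linearized equation up to $O(\psi)$ corrections. Because of the scaling below it is proportional to $\psi'$, and I would combine this with a Sturm comparison against $e^{-\lam_1 s}$ to rule out fast decay.

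The last step is the scaling relation (\ref{eq3.8}), which follows from (\ref{eq3.1}). That identity says $\psi_\alpha(s)=\psi_1(s+\tfrac12\log\alpha)$. Hence for $N\ge11$,
$$
	\ell_N(\alpha)e^{-\lam_0 s}=\ell_N(1)e^{-\lam_0(s+\frac12\log\alpha)},
$$
which gives $\ell_N(\alpha)=\alpha^{-\lam_0/2}\ell_N(1)$. For $N=10$ the shift replaces $s$ by $s+\tfrac12\log\alpha$ in $se^{-\lam_0s}$. Since $(s+c)/s\to1$, the coefficient of the leading term is again multiplied by $\alpha^{-\lam_0/2}$.
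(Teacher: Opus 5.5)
\textbf{Verdict: there is a genuine gap in the positivity step $\ell>0$, and for $N=10$ your proposed mechanism cannot close it.} Steps 1--3 and the scaling step are fine; variation of constants is a legitimate replacement for the paper's Proposition~\ref{prp6.1}. The trouble is the crux. You plan to conclude $\ell\neq0$ from the orbit entering along the slow direction, $\psi'/\psi\to-\lambda_0$ rather than $-\lambda_1$.

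When $N=10$ one has $\lambda_0=\lambda_1=4$, so this criterion separates nothing. The generic behaviour $\psi\sim-\ell s e^{-4s}$ and the exceptional one $\psi\sim-ce^{-4s}$ both have $\psi'/\psi\to-4$, and the exceptional one is exactly the case $\ell=0$ you must exclude. Membership in $\{\psi<0,\ 0<\psi'<-\lambda_0\psi\}$ only tells you that $z=e^{\lambda_0 s}\psi$ is decreasing. That is compatible with $z\to-c<0$, i.e.\ with $\ell=0$.

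Two smaller problems. First, the orbit does not start at $(-\infty,-2)$. Since $\psi'(s)=e^s\phi'(e^s)+2$, it starts at $(-\infty,2)$; with your value it would not even lie in $\{\psi'>0\}$. Second, the alternative via $\partial_{\log\alpha}\psi=\tfrac12\psi'$ is too vague to assess. Positivity of $\psi'$ alone does not exclude fast decay, because a solution $\psi\sim ce^{-\lambda_1 s}$ with $c<0$ also has $\psi'>0$ for large $s$.

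\textbf{How to close the gap.} The missing ingredient is a sign condition on the second derivative of $z$. Put $C_0=\sqrt{(N-2)(N-10)}$ and $h(\psi)=(e^\psi-1)/\psi$, which lies in $(0,1)$ for $\psi<0$. Then $z=e^{\lambda_0 s}\psi$ satisfies $(e^{C_0 s}z')'=2(N-2)(1-h(\psi))e^{C_0 s}z<0$. Also $e^{C_0 s}z'\to0$ as $s\to-\infty$, because $\psi=2s+O(1)$ and $\psi'\to2$. Hence $z'<0$ on $\R$.

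This is exactly the invariance of your region, which does hold: on the line $\psi'=-\lambda_0\psi$ one finds $(\psi'+\lambda_0\psi)'=2(N-2)(1+\psi-e^{\psi})<0$, using $\lambda_0\lambda_1=2(N-2)$. For $N\ge11$ this already gives $-\ell=\lim_{s\to\infty}z(s)<z(s_0)<0$.

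For $N=10$ you need one more step. Here $C_0=0$, so $z'$ itself is decreasing. Thus $z'(s)\le z'(s_0)<0$ for $s\ge s_0$, and therefore $-\ell=\lim_{s\to\infty}z(s)/s\le z'(s_0)<0$. This concavity of $z$ is what your sketch lacks.

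\textbf{Comparison with the paper.} Completed this way, your route is shorter than the paper's, which never proves positivity directly. The paper first shows that $\ell_N$ is strictly decreasing in $\alpha$. If $\ell_N(\alpha_1)=\ell_N(\alpha_2)$, Lemma~\ref{lem6.6} gives $\int^\infty dr/(r^{N-1}u^2)=\infty$ for $u=\phi(\cdot,\log\alpha_2)-\phi(\cdot,\log\alpha_1)$. The Sturm comparison of Proposition~\ref{prp6.3} then forces $\phi(\cdot,\log\alpha_3)-\phi(\cdot,\log\alpha_2)$ to vanish somewhere, contradicting (\ref{eq3.5}). Positivity of $\ell_N$ then follows by contradiction with $\ell_N\ge0$. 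The strict monotonicity the paper uses in Section~5 also follows from (\ref{eq3.8}) once you know $\ell_N(1)>0$.
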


\begin{remark}
By the usual argument as in \cite{JoLu, GNW} 
we obtain standard asymptotic expansions (see, e.g., \cite[Lemma 2.1]{Tel}). 
In Proposition \ref{prp3.1} we clarify the dependence of the parameter $\ell_N(\alpha)$ 
with respect to $\alpha > 0$,
which plays a crucial role in our proof of Theorems \ref{thm1.2} and \ref{thm1.3}.
\end{remark}

We will give the proof of Proposition \ref{prp3.1} in Section 6 below.
By Proposition \ref{prp3.1} we obtain the following corollary, which will be used in the proof 
of Proposition \ref{prp1.1} and Theorems \ref{thm1.2} and \ref{thm1.3}. 

\begin{corollary}\label{cor3.2}
For $\alpha > 0$, let $\phi(r, \log \alpha)$ be the solution of $(\ref{eq2.5})$. 
Then  
\begin{equation}
	e^{\phi(r, \log \alpha)} = 
	\frac{2N-4}{r^2} -(2N-4)\ell_N(\alpha)r^{-2-\lam_0} + o(r^{-2-\lam_0})
	\label{eq3.9}
\end{equation}
as $r \to \infty$ if $N \geq 11$, and 
\begin{equation}
	e^{\phi(r, \log \alpha)} = 
	\frac{2N-4}{r^2} -(2N-4)\ell_N(\alpha)r^{-2-\lam_0}\log r 
	+ o(r^{-2-\lam_0}\log r) 
	\label{eq3.10}
\end{equation}
as $r \to \infty$ if $N = 10$,
where $\ell_N(\alpha)$ is the constant in Proposition $\ref{prp3.1}$. 
\end{corollary}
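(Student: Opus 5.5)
The corollary follows by exponentiating the expansions of Proposition \ref{prp3.1} and linearizing the exponential around the singular solution. Write
$$
\phi(r, \log\alpha) = \phi_\infty(r) - \eta(r),
$$
where by Proposition \ref{prp3.1}
$$
\eta(r) = \ell_N(\alpha) r^{-\lam_0} + o(r^{-\lam_0}) \quad (N \geq 11), \qquad
\eta(r) = \ell_N(\alpha) r^{-\lam_0}\log r + o(r^{-\lam_0}\log r) \quad (N = 10).
$$
In both cases $\eta(r) \to 0$ as $r \to \infty$, since $\lam_0 > 0$. Indeed, for $N \geq 10$ we have $(N-2)(N-10) < (N-2)^2$, so $\lam_0 > 0$; for $N = 10$ explicitly $\lam_0 = 4$.

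By (\ref{eq3.2}), $e^{\phi_\infty(r)} = (2N-4)r^{-2}$, so
$$
e^{\phi(r,\log\alpha)} = \frac{2N-4}{r^2}\, e^{-\eta(r)}.
$$
Next use the elementary estimate $e^{-s} = 1 - s + O(s^2)$ as $s \to 0$, which gives $e^{-\eta} = 1 - \eta + O(\eta^2)$. The quadratic remainder is negligible. When $N \geq 11$, $O(\eta^2) = O(r^{-2\lam_0}) = o(r^{-\lam_0})$. When $N = 10$, $O(\eta^2) = O(r^{-2\lam_0}\log^2 r) = o(r^{-\lam_0}\log r)$.

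Multiplying by $(2N-4)r^{-2}$ then gives
$$
e^{\phi} = \frac{2N-4}{r^2} - (2N-4)r^{-2}\eta + o(r^{-2-\lam_0})
$$
in the first case, with $o(r^{-2-\lam_0}\log r)$ replacing the remainder in the second. Substituting the expansion of $\eta$ yields (\ref{eq3.9}) and (\ref{eq3.10}).

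There is no real obstacle; the only point needing care is checking that the remainder terms combine into the stated little-$o$ orders, which the estimates above settle.
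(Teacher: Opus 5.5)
Your proposal is correct and follows the paper's proof essentially verbatim: you exponentiate the expansion of Proposition~\ref{prp3.1}, use $e^{\phi_\infty(r)} = (2N-4)r^{-2}$, and linearize the exponential. Your remainder $e^{-s} = 1 - s + O(s^2)$ is the right form (the paper's ``$e^x = 1 + x + o(x^2)$'' is a slip). Your check that $O(\eta^2)$ is absorbed into the stated little-$o$ terms in both cases is exactly what is needed.
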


\begin{proof}
Let us consider the case $N \geq 11$. 
By (\ref{eq3.6}) we have
$$
	\dsp e^{\phi(r, \log \alpha)} = 
	\frac{2N-4}{r^2}e^{-\ell_N(\alpha)r^{-\lam_0} + o(r^{-\lam_0})}
	\quad \mbox{as} \ r \to \infty.
$$
Since $e^x = 1 + x + o(x^2)$ as $x \to 0$, we have
$$
	\begin{array}{rcl}
	\dsp e^{\phi(r, \log \alpha)} & = & \dsp \frac{2N-4}{r^2}
	\left(1-\ell_N(\alpha)r^{-\lam_0} + o(r^{-\lam_0})\right)
	\\[2ex] 
	& = & \dsp \frac{2N-4}{r^2} -(2N-4)\ell_N(\alpha)r^{-2-\lam_0} + o(r^{-2-\lam_0})
	\end{array}
$$
as $r \to \infty$, and hence (\ref{eq3.9}) holds. 
In the case $N = 10$, by the same argument we obtain (\ref{eq3.10}). 
\end{proof}

Proposition \ref{prp1.1} follows immediately from Corollary \ref{cor3.2}.

\begin{proof}[Proof of Proposition \ref{prp1.1}] 
We see that the radially symmetric solution $(U_{\alpha}, V_{\alpha})$ 
of (\ref{eq1.7}) satisfying (\ref{eq1.8}) is given by 
\begin{equation}
	U_{\alpha}(r) = e^{\phi(r; \log \alpha)} \quad 
	\mbox{and} \quad 
	V_{\alpha}(r) = \phi(r; \log \alpha) -\log \alpha \quad \mbox{for} \ r \geq 0. 
	\label{eq3.11}
\end{equation}
In fact, by a direct calculation, 
$(U_{\alpha}(|x|), V_{\alpha}(|x|))$ with $x \in \R^N$ solves 
(\ref{eq1.7}) and satisfies (\ref{eq1.8}). 
By the uniqueness of the initial value problem, the solution $(U_{\alpha}, V_{\alpha})$ 
is given by (\ref{eq3.11}).
By Corollary \ref{cor3.2}, we obtain (\ref{eq1.9}) and (\ref{eq1.10}) 
with $C_N(\alpha) = (2N-4)\ell_N(\alpha)$. 
From (\ref{eq3.8}), we see that $C_N(\alpha)$ satisfies (\ref{eq1.11}). 
\end{proof}

We assume that $\phi_0 \in C[0, \infty)$ satisfies  
\begin{equation}
	\phi_0(r) > 0 \quad \mbox{for} \ r \geq 0 \quad \mbox{and} \quad 
	\phi_0(r) < \phi_{\infty}(r) \quad \mbox{for} \ r > 0.
	\label{eq3.12}
\end{equation}
In the proof of Theorems \ref{thm1.2} and \ref{thm1.3}, 
we need the following lemmas.

\begin{lemma}\label{lem3.3}
Let $N \geq 10$. 
Assume that $\phi_0$ satisfies $(\ref{eq3.12})$ and 
there exist $\alpha_1 > 0$ and $r_1 > 0$ such that 
\begin{equation}
	\phi_0(r) \leq \phi(r, \log \alpha_1) \quad \mbox{for} \ r \geq r_1.
	\label{eq3.13}
\end{equation}
Then there exists $\hat{\alpha} \geq \alpha_1$ such that  
$\phi_0(r) \leq \phi(r, \log \hat{\alpha})$ for all $r \geq 0$.
\end{lemma}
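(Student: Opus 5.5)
We split $[0,\infty)$ into a compact part $[0,r_1]$, where we use the uniform convergence (\ref{eq3.4}) of $\phi(\cdot,\log\alpha)$ to $\phi_\infty$ as $\alpha\to\infty$, and a tail $[r_1,\infty)$, where we use the monotonicity (\ref{eq3.5}) in $\alpha$.

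\emph{Tail.} For $r \geq r_1$ and any $\hat\alpha \geq \alpha_1$, (\ref{eq3.13}) and (\ref{eq3.5}) give
\[
\phi_0(r) \leq \phi(r,\log\alpha_1) \leq \phi(r,\log\hat\alpha),
\]
so every $\hat\alpha \geq \alpha_1$ works there.

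\emph{Compact part, away from the origin.} We must find $\hat\alpha$ with $\phi_0 \le \phi(\cdot,\log\hat\alpha)$ on $[0,r_1]$. Fix a small $r_0\in(0,r_1)$, to be chosen below.

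The function $\phi_\infty - \phi_0$ is continuous and positive on $[r_0,r_1]$ by (\ref{eq3.12}). Hence it has a positive minimum $\delta$ there.

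By (\ref{eq3.4}), $\phi(r,\log\alpha)\to\phi_\infty(r)$ uniformly on $[r_0,\infty)$ as $\alpha\to\infty$. So there is $\alpha_2$ such that for all $\alpha\geq\alpha_2$,
\[
\phi(r,\log\alpha) > \phi_\infty(r) - \delta \ge \phi_0(r) \quad \text{on } [r_0,r_1].
\]

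\emph{Near the origin.} This is the delicate point, because $\phi_\infty$ blows up at $0$ and the uniform convergence is not available on $[0,r_0]$. We use instead that $\phi(\cdot,\log\alpha)$ is radially decreasing, since $r^{N-1}\phi' = -\int_0^r s^{N-1}e^{\phi}\,ds<0$.

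From (\ref{eq3.1}) we get the uniform lower bound
\[
\phi(r,\log\alpha) = \phi(\sqrt\alpha r,0)+\log\alpha \geq \phi(\sqrt\alpha r_0,0) + \log\alpha \quad \text{for } r\le r_0.
\]
The right-hand side equals $\phi(r_0,\log\alpha)$, which by (\ref{eq3.4}) tends to $\phi_\infty(r_0) = \log(2N-4) - 2\log r_0$ as $\alpha\to\infty$.

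Let $M=\max_{[0,r_1]}\phi_0<\infty$, finite since $\phi_0$ is continuous on $[0,r_1]$. Choose $r_0$ so small that $\phi_\infty(r_0) > M+1$. Then for all sufficiently large $\alpha$,
\[
\phi(r,\log\alpha) \ge \phi(r_0,\log\alpha) > M \ge \phi_0(r) \quad \text{on } [0,r_0].
\]

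\emph{Conclusion.} The order of choices is: first $r_0$ (depending only on $M$), then $\delta$ (depending on $r_0$), then $\alpha$. Taking $\hat\alpha$ at least $\alpha_1$, $\alpha_2$ and the threshold from the origin step gives $\phi_0\le\phi(\cdot,\log\hat\alpha)$ on $[0,\infty)$.
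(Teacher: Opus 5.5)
Your proof is correct and follows essentially the same three-region argument as the paper. The tail $[r_1,\infty)$ is handled by (\ref{eq3.13}) with the monotonicity (\ref{eq3.5}), and the compact middle region by the uniform convergence (\ref{eq3.4}) against the positive gap $\phi_\infty-\phi_0$.

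The only difference is near the origin. The paper fixes $\bar\alpha$ with $\log\bar\alpha>\phi_0(0)$, gets an interval $[0,r_2]$ by continuity at $r=0$, and then passes to larger $\alpha$ via (\ref{eq3.5}). You instead use the radial decrease of $\phi(\cdot,\log\alpha)$ together with $\phi_\infty(r_0)\to\infty$ as $r_0\to 0$. Both routes work equally well.
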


\begin{lemma}\label{lem3.4}
Let $N \geq 10$. 
Assume that $\phi_0(r)$ is nonincreasing and  satisfies $(\ref{eq3.12})$.
Assume, in addition, that there exist $\alpha > 0$ and $r_1 > 0$ such that 
\begin{equation}
	\phi_0(r) \geq \phi(r, \log \alpha) \quad \mbox{for} \ r \geq r_1
	\quad \mbox{and} \quad 
	\sup_{r \geq 0}(\phi(r, \log \alpha) - \phi_0(r)) > 0.
	\label{eq3.14}
\end{equation}
Then there exists $\underline{\phi}_0 \in C^1[0, \infty)$ satisfying the following. 
\begin{equation}
	\underline{\phi}_0(r) \equiv {\rm const.} \quad \mbox{for \quad $0 \leq r \leq r_0$ 
	\quad  with some $r_0 > 0$,}
	\label{eq3.15}
\end{equation}
\begin{equation}
	\phi_0(r) \geq \underline{\phi}_0(r) \quad \mbox{for all} \ r \geq 0,
	\label{eq3.16}
\end{equation}
\begin{equation}
	r^{N-1}\underline{\phi}_0'(r) + \int^r_0s^{N-1}e^{\underline{\phi}_0(s)}ds \geq 0
	\quad \mbox{for} \ r > 0,
	\label{eq3.17}
\end{equation}
\begin{equation}
	\underline{\phi}_0(r) = \phi(r, \log \alpha) + o(r^{-\lam_0}) 
	\quad \mbox{as} \ r \to \infty 
	\quad \mbox{if} \ N \geq 11
	\label{eq3.18}
\end{equation}
and 
\begin{equation}
	\underline{\phi}_0(r) = \phi(r, \log \alpha) + o(r^{-\lam_0}\log r) 
	\quad \mbox{as} \ r \to \infty
	\quad \mbox{if} \ N = 10.
	\label{eq3.19}
\end{equation}
\end{lemma}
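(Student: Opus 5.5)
The plan is to build $\underline{\phi}_0$ as a $C^1$ function that is a pointwise subsolution of the Liouville equation away from a compact set and that \emph{lies below} $\phi(\cdot,\log\alpha)$ at infinity by a correction decaying faster than the leading term. Write $\phi_\alpha=\phi(\cdot,\log\alpha)$ and set
$$F[\psi](r)=r^{N-1}\psi'(r)+\int_0^r s^{N-1}e^{\psi(s)}ds .$$
Then (\ref{eq3.17}) reads $F[\underline{\phi}_0]\ge 0$. Moreover $F[\psi]'=r^{N-1}(\Delta\psi+e^{\psi})$, so $F$ is nondecreasing wherever $\psi$ is a radial subsolution, and $F[\phi_\alpha]\equiv0$.

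\textbf{Outer piece.} Let $\lambda_0\le\lambda_1$ be the roots of $\mu^2-(N-2)\mu+2N-4=0$; these are the exponents of the linearization $h''+\frac{N-1}{r}h'+\frac{2N-4}{r^2}h$ at $\phi_\infty$.
\begin{itemize}
\item For $N\ge11$, fix $\mu\in(\lambda_0,\lambda_1)$, so $\mu^2-(N-2)\mu+2N-4<0$, and put $\psi=\phi_\alpha-\kappa r^{-\mu}$ with $\kappa>0$.
\item For $N=10$ the roots coincide ($\lambda_0=\lambda_1=4$). Put $\psi=\phi_\alpha-\kappa r^{-4}(\log r)^{1/2}$, using that $L(r^{-4}g(\log r))=r^{-6}g''(\log r)<0$ for concave $g$.
\end{itemize}
Expanding $e^{\psi}=e^{\phi_\alpha}(1-\kappa r^{-\mu}+O(r^{-2\mu}))$ and using Corollary \ref{cor3.2} ($e^{\phi_\alpha}=(2N-4)r^{-2}+o(r^{-2})$) gives
$$\Delta\psi+e^\psi=\kappa\,(-\mu^2+(N-2)\mu-(2N-4))\,r^{-\mu-2}\,(1+o(1))>0$$
for $r\ge R$, with $R$ large. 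The $N=10$ case works the same way with the $\log$-corrected profile. The asymptotics (\ref{eq3.18})/(\ref{eq3.19}) then hold automatically. Since $\psi<\phi_\alpha\le\phi_0$ for $r\ge r_1$ by (\ref{eq3.14}), (\ref{eq3.16}) holds on $[R,\infty)$ once $R\ge r_1$.

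\textbf{Gluing and the flux budget.} On $[0,R]$ I take $\underline{\phi}_0\equiv c$ on $[0,r_0]$, with $c<\min_{[0,R]}\phi_0$. This uses that $\phi_0$ is nonincreasing and positive, so $c$ can be taken below $\phi_0(R)$. Between $r_0$ and $R$ I join $c$ to the outer piece by a $C^1$ nonincreasing transition that stays below $\phi_0$. To check (\ref{eq3.17}) it suffices, since $F$ increases on the pointwise-subsolution region, to verify $F\ge0$ on the transition zone, in particular at $r=R$. There
$$F[\psi](R)=R^{N-1}(\psi'-\phi_\alpha')(R)-\int_0^R s^{N-1}(e^{\phi_\alpha}-e^{\underline\phi_0})ds .$$
The first term is $\kappa\mu R^{N-2-\mu}$. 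The deficit has two parts. The part from $[r_1,R]$ is about $(2N-4)\kappa R^{N-2-\mu}/(N-2-\mu)$. The part from $[0,r_1]$ is a bounded quantity $D$ independent of $R$; this is where the hypothesis $\sup(\phi_\alpha-\phi_0)>0$ forces $D>0$.

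Because $\mu(N-2-\mu)>2N-4$, which is exactly the condition that $\mu$ lies between the roots, the gain dominates the proportional loss. Taking $R$, or $\kappa$, large therefore gives $F(R)>0$. The same computation works for $N=10$ using $(\log r)^{1/2}$, where the gain $\frac{d}{dr}$ of the logarithmic factor supplies the surplus.

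\textbf{Main obstacle.} The delicate step is the transition zone $[r_0,R]$: there the function is not a subsolution, and $F$ may decrease. I would first try making the transition as short as possible and placing it where $\phi_0$ and the outer piece are both far above $c$, so that $|\psi'|$ is controlled. I would then choose $c$, and hence $\int_0^{r_0}s^{N-1}e^c$, together with $\kappa$, so that $F$ stays positive throughout. If a direct choice fails, the fallback is to enlarge $\kappa$ so that the surplus $\kappa\mu r^{N-2-\mu}$ already absorbs the transition loss. This requires shifting the gluing point outward while keeping the outer profile below $\phi_0$ on $[r_1,\infty)$, which is automatic since it lies below $\phi_\alpha$.
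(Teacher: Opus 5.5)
Your proposal has a genuine gap, and it is the step you yourself call the main obstacle. Attaching a constant near $r=0$ while keeping (\ref{eq3.16}) and (\ref{eq3.17}) is the whole content of the lemma, and your proposal never constructs the transition on $[r_0,R]$.

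The flux budget you offer as evidence does not support the construction either. As you describe it, $\underline{\phi}_0=c$ on $[0,r_0]$ and is a nonincreasing transition on $[r_0,R]$, with $c<\min_{[0,R]}\phi_0$. Then $\underline{\phi}_0\le c<\phi_0(R)<\phi_\infty(R)$ on $[0,R]$, so $\int_0^R s^{N-1}e^{\underline{\phi}_0}ds<\frac{2N-4}{N}R^{N-2}$. On the other hand, $-R^{N-1}\phi_\alpha'(R)=\int_0^R s^{N-1}e^{\phi_\alpha}ds=2R^{N-2}(1+o(1))$. Hence
\[
F[\underline{\phi}_0](R)\le -\frac{4}{N}R^{N-2}(1+o(1))+\kappa\mu R^{N-2-\mu},
\]
which is negative for fixed $\kappa$ and large $R$. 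Enlarging $R$ makes things worse, not better.

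Your bound $(2N-4)\kappa R^{N-2-\mu}/(N-2-\mu)$ for the deficit on $[r_1,R]$ is valid only if $\underline{\phi}_0=\psi$ already on $[r_1,R]$. In that case $F$ is nondecreasing from the start of the outer piece onward. The sign of $F(R)$ for large $R$ is then irrelevant, and everything hinges on $F$ at the junction, which is exactly the transition you have not built.

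The missing idea is that no transition is needed. Your outer profile is a subsolution on all of $(0,\infty)$, not only for large $r$. Suppose $h>0$ satisfies $\Delta h+\frac{2N-4}{r^2}h\le 0$. Then $1-e^{-x}\le x$ together with $e^{\phi_\alpha}<e^{\phi_\infty}$ (from (\ref{eq3.5})) gives
\[
\Delta(\phi_\alpha-\kappa h)+e^{\phi_\alpha-\kappa h}\ge-\kappa\Bigl(\Delta h+\frac{2N-4}{r^2}h\Bigr)\ge0,
\]
with no asymptotics at all.

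The paper takes $h=r^{-(N-2)/2}$. This $h$ qualifies precisely because $N\ge10$; for $N=10$ it is the double root, so no logarithmic factor is needed, and your $r^{-4}(\log r)^{1/2}$ is undefined below $r=1$ anyway. Crucially, $h$ blows up at $r=0$. Set $C_0=\sup_{r>0}(\phi_\alpha-\phi_0)/h\in(0,\infty)$ and $w=\phi_\alpha-C_0h$. Then:
\begin{itemize}
\item $w\le\phi_0$, and $w\to-\infty$ both at $0$ and at $\infty$, so $w$ attains its maximum at some $r_2$ with $w'(r_2)=0$.
\item Cutting flat there, $\underline{\phi}_0=w(r_2)$ on $[0,r_2]$ and $\underline{\phi}_0=w$ on $[r_2,\infty)$, gives a $C^1$ function.
\item (\ref{eq3.16}) on $[0,r_2]$ follows from $w(r_2)\le\phi_0(r_2)\le\phi_0(r)$, using monotonicity of $\phi_0$.
\item (\ref{eq3.17}) for $r>r_2$ follows by integrating $(r^{N-1}w')'+r^{N-1}e^{w}>0$ from $0$, using $r^{N-1}w'\to0$, and noting that $\underline{\phi}_0\ge w$ only enlarges the integral term.
\end{itemize}

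Your $r^{-\mu}$ with $\lambda_0<\mu<\lambda_1$ would work equally well in this scheme for $N\ge11$, since $\mu<N-2$. Your fallback of enlarging $\kappa$ points this way: for large $\kappa$ the profile increases near the origin, which forces the junction out to its interior maximum. What you lacked is the global subsolution property, the use of the singularity at the origin, and cutting flat at that maximum.
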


We will give the proof of Lemmas \ref{lem3.3} and \ref{lem3.4} in Section 7 
after proving Theorems \ref{thm1.2} and \ref{thm1.3}.


\section{Boundedness of global solutions} 

In this section, we will show the following proposition, 
which corresponds to the assertion (ii) of Theorems \ref{thm1.2} and \ref{thm1.3}. 

\begin{proposition}\label{prp4.1}
Assume that $u_0(r)$ satisfies $(\ref{eq1.4})$ and, in addition,  
$(\ref{eq1.13})$ if $N \geq 11$ and $(\ref{eq1.15})$ if $N = 10$.
Then the solution $(u, v)$ of $(\ref{eq1.1})$ satisfies 
$$
	\sup_{t > 0}\|u(\cdot, t)\|_{L^{\infty}(\R^N)} < \infty.
$$
\end{proposition}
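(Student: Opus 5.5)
We compare $u_0$ with a stationary solution from above, working at the level of the transformed equation (\ref{eq1.19}); Corollary \ref{cor2.5} and Proposition \ref{prp2.7} then give the conclusion.

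\emph{Step 1: choose $\alpha_1$.} Treat first $N \geq 11$. By (\ref{eq1.13}) there are $c_0 > 0$ and $R_0 > 0$ with
$$
U_\infty(r) - u_0(r) \geq c_0 r^{-2-\lam_0} \quad \mbox{for} \ r \geq R_0 .
$$
By (\ref{eq3.8}), $\ell_N(\alpha) = \alpha^{-\lam_0/2}\ell_N(1) \to 0$ as $\alpha \to \infty$. Fix $\alpha_1$ so large that $(2N-4)\ell_N(\alpha_1) < c_0/2$. Corollary \ref{cor3.2} then gives
$$
e^{\phi(r,\log\alpha_1)} \geq U_\infty(r) - \tfrac{c_0}{2} r^{-2-\lam_0} + o(r^{-2-\lam_0}) .
$$
Hence there is $r_1 > 0$ with $u_0(r) \leq e^{\phi(r,\log\alpha_1)}$ for $r \geq r_1$. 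For $N=10$ the same argument works with the factor $\log r$ inserted, using (\ref{eq1.15}) and (\ref{eq3.10}).

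\emph{Step 2: apply Lemma \ref{lem3.3}.} Set $\phi_0 = \log u_0$. By (\ref{eq1.4}), $\phi_0$ is continuous and satisfies $\phi_0 < \phi_\infty$ for $r > 0$. The positivity requirement $\phi_0 > 0$ in (\ref{eq3.12}) may fail. I expect it is not really used in Lemma \ref{lem3.3}, or it can be removed by replacing $\phi_0$ with $\max\{\phi_0, \psi\}$ for a suitable function $\psi$ lying below $\phi(\cdot,\log\alpha_1)$ and $\phi_\infty$. I would check this in the proof of that lemma. Lemma \ref{lem3.3} then provides $\hat\alpha \geq \alpha_1$ with
$$
u_0(r) \leq e^{\phi(r,\log\hat\alpha)} = U_{\hat\alpha}(r) \quad \mbox{for all} \ r \geq 0 .
$$

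\emph{Step 3: pass to averaged masses.} Integrating gives $w_0(r) \leq W(r)$, where
$$
W(r) = r^{-N}\int_0^r s^{N-1} e^{\phi(s,\log\hat\alpha)}\,ds .
$$
By Proposition \ref{prp2.4}, $W$ is the regular solution of (\ref{eq2.4}) with $W(0) = \hat\alpha/N$. It is bounded, positive, and satisfies $rW' \leq 0$, so it is a positive radial classical solution of (\ref{eq2.3}) in $\R^{N+2}$. The function $w$ of (\ref{eq1.16}) solves (\ref{eq1.19}). Corollary \ref{cor2.5} therefore gives $w(r,t) \leq W(r) \leq \hat\alpha/N$ for all $t$. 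Since also $w \geq 0$, $w$ is uniformly bounded.

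\emph{Step 4: conclude.} Proposition \ref{prp2.7} now yields $\sup_{t>0}\|u(\cdot,t)\|_{L^\infty} < \infty$.

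The main obstacle is Step 1 together with Lemma \ref{lem3.3}. The scaling law (\ref{eq3.8}) is exactly what allows a single stationary solution to dominate the tail of $u_0$. Lemma \ref{lem3.3} then pushes the comparison inward to all $r \geq 0$, using the ordering (\ref{eq3.5}) and the uniform convergence (\ref{eq3.4}) on compact sets away from the origin.
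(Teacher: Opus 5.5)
Your proposal is correct and follows the paper's proof essentially step for step: a tail comparison via the scaling law (\ref{eq3.8}), then Lemma \ref{lem3.3}, Corollary \ref{cor2.5} and Proposition \ref{prp2.7}. The only cosmetic difference is that you compare $u_0$ with $e^{\phi}$ directly via Corollary \ref{cor3.2}, whereas the paper takes logarithms and uses (\ref{eq3.6}). Your concern about the condition $\phi_0>0$ in (\ref{eq3.12}) is justified: it cannot hold together with $\phi_0<\phi_\infty$, so it is evidently a misprint. No workaround is needed, however, because the proof of Lemma \ref{lem3.3} uses only the continuity of $\phi_0$, the monotonicity (\ref{eq3.5}) and the uniform convergence (\ref{eq3.4}).
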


\begin{proof}[Proof of Proposition \ref{prp4.1}]
First we consider the case $N \geq 11$.
From (\ref{eq1.13}) there exist constants $C_1 > 0$ and $r_1 > 0$ such that 
$$
	r^{2+\lam_0}\left(\frac{2N-4}{r^2}-u_0(r)\right) \geq C_1 
	\quad \mbox{for} \ r \geq r_1.
$$
Then it follows that 
$$
	0 < u_0(r) \leq \frac{2N-4}{r^2} - C_1r^{-2-\lam_0} 
	=\frac{2N-4}{r^2}\left(1-\frac{C_1}{2N-4}r^{-\lam_0}\right)
	\quad \mbox{for} \ r \geq r_1.
$$
Define $\phi_0(r) = \log u_0(r)$. 
Since $\log(1-x) < -x$ for $0 < x < 1$, 
we have 
$$
	\phi_0(r) \leq -2\log r + \log(2N-4) -\frac{C_1}{2N-4}r^{-\lam_0} 
	\quad \mbox{as} \ r \to \infty.
$$
Since $\ell_N(\alpha) \to 0$ as $\alpha \to \infty$ from (\ref{eq3.8}), 
there exists $\bar{\alpha} > 0$ such that 
$\ell_N(\bar{\alpha}) < C_1/(2N-4)$. 
Then, from (\ref{eq3.6}), there exists $r_2 > 0$ such that 
$$
	\phi_0(r) \leq \phi(r, \log \bar{\alpha}) \quad \mbox{for} \ r \geq r_2.
$$
By Lemma \ref{lem3.3} there exists $\hat{\alpha} > 0$ such that 
$$
	\phi_0(r) \leq \phi(r, \log \hat{\alpha}) \quad \mbox{for all} \ r \geq 0.
$$
This implies that $u_0(r) \leq e^{\phi(r, \log \hat{\alpha})}$ for $r \geq 0$. 
Define $w_0(r)$ and $W_0(r)$, respectively, by (\ref{eq1.18}) and 
$$
	W_0(r) = \frac{1}{r^N}\int^r_0 s^{N-1}e^{\phi(s, \log \hat{\alpha})}ds
	\quad \mbox{for} \ r > 0
	\quad \mbox{and} \quad W_0(0) = \lim_{r \to 0}W_0(r) = \frac{\hat{\alpha}}{N}.
$$
Then we have $w_0(r) \leq W_0(r)$ for $r \geq 0$.
Proposition \ref{prp2.4} implies that $W_0 \in C^2[0, \infty)$ satisfies 
(\ref{eq2.4}) with $W_0'(0) = 0$. 
Then $W_0(|x|)$ with $x \in \R^{N+2}$ is a positive radial 
solutions of (\ref{eq2.3}) in $\R^{N+2}$.
Let $w(x, t)$ be a solution of (\ref{eq1.19}).  
Then, by Corollary \ref{cor2.5} we see that $w$ is global and satisfies 
$$
	w(x, t) \leq W_0(|x|) 
	\quad \mbox{for all} \  (x, t) \in (\R^{N+2} \times [0, \infty)).
$$
Thus we obtain 
$\sup_{t > 0}\|w(\cdot, t)\|_{L^{\infty}} \leq \|W_0(\cdot)\|_{L^{\infty}[0, \infty)}$.

Next we consider the case $N = 10$. 
From (\ref{eq1.15}) there exist constants $C_1 > 0$ and $r_1 > 0$ such that 
$$
	\frac{r^{2+\lam_0}}{\log r}\left(\frac{2N-4}{r^2}-u_0(r)\right) \geq C_1 
	\quad \mbox{for} \ r \geq r_1.
$$
Then it follows that 
$$
	0 < u_0(x) \leq \frac{2N-4}{r^2} - C_1r^{-2-\lam_0}\log r 
	=\frac{2N-4}{r^2}\left(1-\frac{C_1}{2N-4}r^{-\lam_0}\log r \right)
$$
for $r \geq r_1$.
Define $\phi_0(r) = \log u_0(r)$. 
Since $\log(1-x) < -x$ for $0 < x < 1$, 
we have 
$$
	\phi_0(r) \leq -2\log r + \log(2N-4) -\frac{C_1}{2N-4}r^{-\lam_0}\log r 
	\quad \mbox{as} \ r \to \infty.
$$
Since $\ell_N(\alpha) \to 0$ as $\alpha \to \infty$ from (\ref{eq3.8}), 
there exists $\bar{\alpha} > 0$ such that 
$\ell_N(\bar{\alpha}) < C_1/(2N-4)$.  
Then, by the same argument as in the case $N \geq 11$, we obtain 
$\sup_{t > 0}\|w(\cdot, t)\|_{L^{\infty}} \leq \|W_0(\cdot)\|_{L^{\infty}[0, \infty)}$.
By Proposition \ref{prp2.7} we conclude that 
$\sup_{t > 0}\|u(\cdot, t)\|_{L^{\infty}(\R^N)} < \infty$ 
in the both cases $N \geq 11$ and $N = 10$.
\end{proof}


\section{Unboundedness of global solutions} 

In this section, we will show the following proposition, 
which corresponds to the assertion (i) of Theorems \ref{thm1.2} and \ref{thm1.3}. 

\begin{proposition}\label{prp5.1}
Assume that $u_0(r)$ satisfies $(\ref{eq1.4})$ and, 
in addition, $(\ref{eq1.12})$ if $N \geq 11$ and $(\ref{eq1.14})$ if $N = 10$.
Then the solution $(u, v)$ of $(\ref{eq1.1})$ satisfies
$$
	\|u(\cdot, t)\|_{L^{\infty}(\R^N)} \to \infty \quad \mbox{as} 
	\ t \to \infty.
$$
\end{proposition}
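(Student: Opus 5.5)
We argue by contradiction combined with a monotone comparison. Set $\phi_0(r)=\log u_0(r)$, so that $\phi_0<\phi_\infty$ on $(0,\infty)$ by (\ref{eq1.4}). The plan is to place below $w_0$ a subsolution $\underline{w}_0$ of (\ref{eq1.20}) built from Lemma \ref{lem3.4}. Along the flow the solution starting from $\underline{w}_0$ increases in time and converges to a stationary solution. That limit is forced to lie above $W$ coming from some $\phi(\cdot,\log\alpha)$ for every $\alpha$, which is impossible for a bounded regular solution.

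\emph{Step 1: checking the hypotheses of Lemma \ref{lem3.4}.} Fix any $\alpha>0$. In the case $N\ge 11$, (\ref{eq1.12}) together with $\log(1-x)\ge -x-x^2$ for small $x$ gives $\phi_0(r)=\phi_\infty(r)+o(r^{-\lam_0})$. By (\ref{eq3.6}) and $\ell_N(\alpha)>0$ we then get $\phi_0(r)\ge\phi(r,\log\alpha)$ for $r\ge r_1(\alpha)$. The case $N=10$ is identical, using (\ref{eq1.14}) and (\ref{eq3.7}). For the second condition in (\ref{eq3.14}), take $\alpha> u_0(0)$, so that $\phi(0,\log\alpha)>\phi_0(0)$.

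Lemma \ref{lem3.4} also requires $\phi_0$ nonincreasing (and $\phi_0>0$), which is not assumed. I would handle this by replacing $u_0$ with a smaller function. Choose a continuous nonincreasing $\tilde u_0\le u_0$, for instance $\tilde u_0(r)=\min\{\inf_{s\le r}u_0(s),\,U_\alpha(r)\}$ suitably adjusted near infinity, such that $\tilde u_0$ still satisfies (\ref{eq1.12})/(\ref{eq1.14}). This is possible because $U_\infty-u_0=o(r^{-2-\lam_0})$ and $U_\infty$ is decreasing. By the comparison principle (Proposition \ref{prp2.1}) applied to the $w$-equation, it suffices to prove unboundedness for $\tilde u_0$. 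Positivity of $\phi_0$ can likewise be arranged after scaling, or is inessential.

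\emph{Step 2: building the subsolution and running the monotone flow.} Let $\underline{\phi}_0$ be given by Lemma \ref{lem3.4}, and set $\underline{w}_0(r)=r^{-N}\int_0^r s^{N-1}e^{\underline{\phi}_0(s)}ds$. Then $\underline{w}_0\le w_0$ by (\ref{eq3.16}). Condition (\ref{eq3.17}) is exactly what makes $\underline{w}_0$ a subsolution of (\ref{eq1.20}). Indeed, writing $m(r)=\int_0^r s^{N-1}e^{\underline{\phi}_0}$, so that $W=r^{-N}m$, the quantity $\Delta W+(x\cdot\nabla W)W+NW^2$ reduces to a positive multiple of $\big(r^{N-1}\underline{\phi}_0'+m\big)e^{\underline{\phi}_0}$, as in the derivation behind Proposition \ref{prp2.4}. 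Condition (\ref{eq3.15}) ensures $\underline{w}_0\in C^2$ at the origin. Let $\underline{w}$ be the solution with initial datum $\underline{w}_0$. It is global and satisfies $\underline{w}\le w\le 2/r^2$ by comparison and Theorem A. By Proposition \ref{prp2.2}, $\underline{w}$ is nondecreasing in $t$ and converges to some $W$ which, by Proposition \ref{prp2.3}, solves (\ref{eq2.3}) in the distributional sense.

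\emph{Step 3: contradiction.} Suppose $u$ stays bounded along a sequence, or globally. By Lemma \ref{lem2.6}, $w(0,t)=u(0,t)/N$, and monotonicity of $\underline{w}$ in $t$ shows that it suffices to prove $\underline{w}(0,t)\to\infty$. If instead $W$ is bounded, then $W$ is a regular radial solution. By Proposition \ref{prp2.4}, $W$ is represented through $\phi(\cdot,\log\beta)$ with $\beta=NW(0)$, and $W\ge\underline{w}_0$.

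The main obstacle is to compare the tails, so as to deduce $\beta\ge\alpha$ and in fact a contradiction. From (\ref{eq3.18})/(\ref{eq3.19}), the mass $r^N\underline{w}_0$ agrees at infinity with that of $\phi(\cdot,\log\alpha)$ up to $o(r^{N-2-\lam_0})$, respectively $o(r^{N-2-\lam_0}\log r)$. The mass of $\phi(\cdot,\log\beta)$ is $r^{N-2}\cdot\frac{2N-4}{N-2}-c\,\ell_N(\beta)r^{N-2-\lam_0}(1+o(1))$, with the $\log$ factor when $N=10$. Here one must check that integrating the expansion of Corollary \ref{cor3.2} is legitimate, which holds since $N-2-\lam_0>0$. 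The inequality $W\ge\underline{w}_0$ then forces $\ell_N(\beta)\le\ell_N(\alpha)$, hence $\beta\ge\alpha$ by (\ref{eq3.8}). Since $\alpha$ was arbitrary while $\beta$ is a fixed number determined by $W$, choosing $\alpha>\beta$ in Step 1 at the outset gives the contradiction. The delicate point is making this uniform: $\beta$ depends on $\alpha$ through $\underline{w}$. To remove the dependence, I would first fix the bound $\sup w\le M$ (assumed, for contradiction, along $t$). Then $\beta\le NM$ for every choice of $\alpha$, and taking $\alpha>NM$ closes the argument. Finally, $\limsup$-unboundedness upgrades to a full limit because $\underline{w}(0,t)$ is monotone in $t$ and $w(0,t)\ge\underline{w}(0,t)$.
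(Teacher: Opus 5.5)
Your proposal is correct and follows essentially the same route as the paper: pass to a nonincreasing minorant (the paper's running minimum $u_{0,m}$, Lemmas \ref{lem5.2}--\ref{lem5.3}), build the subsolution $\underline{w}_0$ from Lemma \ref{lem3.4}, run the monotone flow to a bounded stationary limit $W$, and compare tails to force $NW(0)\ge\alpha$. The only difference is framing: you argue by contradiction, choosing $\alpha$ after a bound $M$ along a sequence of times, whereas the paper shows directly that $\liminf_{t\to\infty}\|w_m(\cdot,t)\|_{L^\infty}\ge\alpha/N$ for every $\alpha$, which is equivalent. One small fix is to drop the cap by $U_\alpha$ in your $\tilde u_0$: by Proposition \ref{prp1.1}, $U_\alpha$ itself violates (\ref{eq1.12})/(\ref{eq1.14}), and the capped function would coincide with $U_\alpha$ near infinity, whereas the plain running minimum already satisfies these conditions by exactly the argument you sketch.
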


Theorems \ref{thm1.2} and \ref{thm1.3} follow immediately 
from Propositions \ref{prp4.1} and \ref{prp5.1}.
In order to prove Proposition \ref{prp5.1}, we need some lemmas. 
Throughout this section, we assume that 
$u_0$ in $(\ref{eq1.1})$ satisfies the hypothesis in Proposition \ref{prp5.1}.
Define 
\begin{equation}
	u_{0, m}(r) = \min\{u_0(s): 0 \leq s \leq r\}
	\quad \mbox{for} \ r \geq 0.
	\label{eq5.1}
\end{equation}
It is clear that $u_{0, m}(r) \leq u_0(r)$ for $r \geq 0$ and 
$u_{0, m}(r)$ is nonincreasing for $r > 0$. 
Furthermore, we obtain the following.

\begin{lemma}\label{lem5.2}
The function $u_{0, m}(r)$, defined by $(\ref{eq5.1})$, satisfies 
\begin{equation}
	\lim_{r \to \infty}r^{2+\lam_0}\left(U_{\infty}(r)- u_{0, m}(r)\right)=  0
	\label{eq5.2}
\end{equation}
if $N \geq 11$, and 
\begin{equation}
	\lim_{r \to \infty}\frac{r^{2+\lam_0}}{\log r}
	\left(U_{\infty}(r)- u_{0, m}(r)\right) =  0
	\label{eq5.3}
\end{equation}
if $N \geq 10$. 
\end{lemma}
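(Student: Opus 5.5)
Since $u_{0,m}\le u_0$, we have $U_\infty - u_{0,m} \ge U_\infty - u_0$. The whole task is therefore an upper bound on $U_\infty(r) - u_{0,m}(r)$ for large $r$. Set $g(r)=r^{-2-\lam_0}$ if $N\ge 11$ and $g(r)=r^{-2-\lam_0}\log r$ if $N=10$. Then (\ref{eq1.12}) and (\ref{eq1.14}) both read $U_\infty(r)-u_0(r)=o(g(r))$, and by (\ref{eq1.4}) this quantity is positive. The plan is to show that the minimum defining $u_{0,m}(r)$ is attained at a point $s\in[0,r]$ that is close enough to $r$ for $U_\infty(s)$ to be compared with $U_\infty(r)$ up to an error $o(g(r))$.

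Fix $\vep>0$. Choose $R_\vep$ with $0<U_\infty(s)-u_0(s)\le\vep g(s)$ for $s\ge R_\vep$; we may also take $R_\vep$ so large that $g$ is decreasing on $[R_\vep,\infty)$. For $r\ge R_\vep$, write $u_{0,m}(r)=\min\{A,B(r)\}$, where $A=\min_{[0,R_\vep]}u_0>0$ by (\ref{eq1.4}) and continuity, and $B(r)=\min_{[R_\vep,r]}u_0$. For $s\in[R_\vep,r]$ we have
$$u_0(s)\ge U_\infty(s)-\vep g(s)\ge U_\infty(r)-\vep g(s),$$
because $U_\infty$ is decreasing. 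The key point is to control $\vep g(s)$ uniformly in $s\le r$, since $g(s)\ge g(r)$ there. So I would not bound crudely. Instead, split according to whether $U_\infty(s)-\vep g(s)\ge U_\infty(r)-\vep g(r)$. This holds precisely when $U_\infty(s)-U_\infty(r)\ge \vep(g(s)-g(r))$. It is true for small enough $\vep$ and $R_\vep$ large: the function $h=U_\infty-\vep g$ is decreasing on $[R_\vep,\infty)$, because $|g'|=O(s^{-3-\lam_0}\log s)$ is much smaller than $|U_\infty'|=(4N-8)s^{-3}$ (recall $\lam_0>0$). Concretely, I would enlarge $R_\vep$ (taking $\vep\le1$) so that $h'<0$ on $[R_\vep,\infty)$. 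Then $u_0(s)\ge h(s)\ge h(r)$, which gives
$$B(r)\ge U_\infty(r)-\vep g(r).$$

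Finally, $A>0$ is fixed while $U_\infty(r)\to0$, so for $r$ large $A\ge U_\infty(r)\ge U_\infty(r)-\vep g(r)$. Hence $u_{0,m}(r)\ge U_\infty(r)-\vep g(r)$ for all large $r$, i.e.
$$0\le U_\infty(r)-u_{0,m}(r)\le \vep g(r).$$
Since $\vep$ is arbitrary, (\ref{eq5.2}) and (\ref{eq5.3}) follow. The only delicate step is the monotonicity of $h=U_\infty-\vep g$. It relies on $\lam_0>0$ (see (\ref{eq1.6}); for $N=10$, $\lam_0=4$), so that $g'$ is of lower order than $U_\infty'$; everything else is routine.
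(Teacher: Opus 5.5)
Your argument is correct and is essentially the paper's proof. The paper also fixes $\vep$ and chooses $r_1$ so that $U_\infty(r)-u_0(r)<\vep r^{-2-\lam_0}$ and $U_\infty(r)-\vep r^{-2-\lam_0}$ is nonincreasing on $[r_1,\infty)$. It then observes that for large $r$ the minimum defining $u_{0,m}(r)$ is attained in $[r_1,r]$, and chains exactly your inequalities; you rule out $[0,R_\vep]$ via $A\ge U_\infty(r)$ rather than via $u_0\to 0$, which is the same point.

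Both you and the paper tacitly use $u_0(0)>0$ to get $A>0$. Condition (\ref{eq1.4}) only gives positivity for $r>0$, and if $u_0(0)=0$ then $u_{0,m}\equiv 0$ and the lemma fails. This is a shared implicit assumption, not a defect of your write-up.
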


\begin{proof}
We will show that (\ref{eq5.2}) holds in the cases $N \geq 11$.
Since $u_0$ satisfies (\ref{eq1.12}), for any $\vep > 0$ there exists $r_1 > 0$ such that 
$$
	U_{\infty}(r) - u_0(r) < \frac{\vep}{r^{2+\lam_0}} \quad \mbox{for} \ r \geq r_1,
$$
which implies that 
\begin{equation}
	u_0(r) > U_{\infty}(r) - \frac{\vep}{r^{2+\lam_0}} \quad \mbox{for} \ r \geq r_1.
	\label{eq5.4}
\end{equation}
Recall that $\lam_0 > 0$. Then we may assume that 
\begin{equation}
	U_{\infty}(r) - \frac{\vep}{r^{2+\lam_0}} \  \mbox{is nonincreasing for} \ r \geq r_1.
	\label{eq5.5}
\end{equation}
Since $u_0(r) > 0$ and $u_0(r) \to 0$ as $r \to \infty$, there exists 
$r_2 \geq r_1$ such that, if $r \geq r_2$, then 
$u_{0, m}(r) = u_0(s)$ with some $s \in [r_1, r]$.  
Let $r \geq r_2$. Then it follows from (\ref{eq5.4}) and (\ref{eq5.5}) that 
$$
	u_{0, m}(r) = u_0(s) >  
	U_{\infty}(s) - \frac{\vep}{s^{2+\lam_0}} \geq 
	U_{\infty}(r) - \frac{\vep}{r^{2+\lam_0}}.
$$
From $u_{0, m}(r) \leq u_0(r) \leq U_{\infty}(r)$,
we obtain 
$$
	0 \leq U_{\infty}(r) - u_{0, m}(r) 
	< \frac{\vep}{r^{2+\lam_0}} \quad \mbox{for} \ r \geq r_2,
$$
which implies that (\ref{eq5.2}) holds. 
We obtain (\ref{eq5.3}) by the same argument. 
Hence, we omit the proof.
\end{proof}

For a solution $(u, v)$ of (\ref{eq1.1}), define $w(r, t)$ and $w_0(r)$ by 
(\ref{eq1.16}) and (\ref{eq1.18}), respectively. 
Let $(u_m, v_m)$ be a solution of $(1.1)$ with $u_0 = u_{0, m}$.
Since $u_{0, m}$ satisfies $u_{0, m}(r) < U_{\infty}(r)$ for $r > 0$, 
we see that $(u_m, v_m)$ is global by Theorem A. 
Define $w_m(r, t)$ by 
\begin{equation}
	w_m(r, t) = \frac{1}{r^N}\int^r_0 s^{N-1}u_m(s, t) ds 
	\quad \mbox{for} \ r > 0, \ t \geq 0.
	\label{eq5.6}
\end{equation}
Define $w_{0, m}(r) = w_m(r, 0)$, that is, 
\begin{equation}
	w_{m, 0}(r) = \frac{1}{r^N}\int^r_0 s^{N-1}u_{0, m}(s) ds 
	\quad \mbox{for} \ r > 0 
	\quad \mbox{and} \quad w_{0, m}(0) = \frac{u_{0, m}(0)}{N}.
	\label{eq5.7}
\end{equation}
We obtain the following lemma.

\begin{lemma}\label{lem5.3}
\begin{itemize}
\item[{\rm (i)}] One has $w_m(r, t) \leq w(r, t)$ for $r \geq 0$ and $t \geq 0$. 

\item[{\rm (ii)}] $\|w_m(\cdot, t)\|_{L^{\infty}[0, \infty)} = w_m(0, t)$ for $t \geq 0$. 

\item[{\rm (iii)}] If $\|w_m(\cdot, t)\|_{L^{\infty}[0, \infty)} \to \infty$ as $t \to \infty$, then 
$\|u(\cdot, t)\|_{L^{\infty}(\R^N)} \to \infty$ as $t \to \infty$.
\end{itemize}
\end{lemma}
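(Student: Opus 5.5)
We prove the three parts in order, each by a short appeal to results already available in Section 2.

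For (i), we use the comparison principle, Proposition \ref{prp2.1}, for the transformed problem (\ref{eq1.19}). Since $u_{0,m}\le u_0$, the definitions (\ref{eq1.18}) and (\ref{eq5.7}) give $w_{0,m}(r)\le w_0(r)$ for $r\ge 0$.

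Both $w$ and $w_m$ are, as radial functions on $\R^{N+2}$, solutions of the same equation (\ref{eq1.19}), so the differential inequality holds with equality. We must check the hypotheses of Proposition \ref{prp2.1} on each $Q_{T_0}$:
\begin{itemize}
\item[(a)] Boundedness. The local solutions are bounded on $[0,T_0]$, and Theorem A gives $w\le 2/r^2$. Together these give $\sup_{Q_{T_0}}|w|<\infty$ and $\sup_{Q_{T_0}}|w_m|<\infty$.
\item[(b)] One-sided bound on $x\cdot\nabla w_m$. Since $u_{0,m}$ is nonincreasing, Lemma \ref{lem2.6} yields $r\,\partial_r w_m\le 0$, so $x\cdot\nabla w_m$ is bounded above.
\end{itemize}
Hence $w_m\le w$ on $Q_{T_0}$ for every $T_0$, which gives (i). Regularity at $x=0$ for the $(N+2)$-dimensional radial function is standard and we take it as given.

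For (ii), Lemma \ref{lem2.6} again gives $\partial_r w_m(\cdot,t)\le0$, so $w_m(\cdot,t)$ is nonincreasing in $r$. Since $w_m\ge0$, its supremum is attained at $r=0$, that is $\|w_m(\cdot,t)\|_{L^\infty}=w_m(0,t)$.

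For (iii), Lemma \ref{lem2.6} gives $w(0,t)=u(0,t)/N\le \|u(\cdot,t)\|_{L^\infty}/N$. However, (i) only yields $w(0,t)\ge w_m(0,t)$, and this alone does not control $u(0,t)$ from below by anything growing. We therefore use the identity $u=Nw+rw_r$, i.e.
\[
w(r,t)=r^{-N}\int_0^r s^{N-1}u(s,t)\,ds\le \frac{\|u(\cdot,t)\|_{L^\infty}}{N},
\]
which gives $\|w(\cdot,t)\|_{L^\infty}\le \|u(\cdot,t)\|_{L^\infty}/N$ directly. Combining this with (i) and (ii),
\[
\|u(\cdot,t)\|_{L^\infty}\ge N\,w(0,t)\ge N\,w_m(0,t)=N\|w_m(\cdot,t)\|_{L^\infty}\to\infty .
\]

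The only delicate point is the verification of the hypotheses of Proposition \ref{prp2.1} in (i), namely the boundedness of $w$ and $w_m$ and the upper bound on $x\cdot\nabla w_m$; parts (ii) and (iii) are immediate once (i) holds.
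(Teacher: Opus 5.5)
Your proof is correct and follows the paper's argument. For (i) you compare via Proposition~\ref{prp2.1}, using $x\cdot\nabla w_m\le 0$ from Lemma~\ref{lem2.6}; for (ii) you use monotonicity in $r$; and for (iii) you use $\|u(\cdot,t)\|_{L^\infty}\ge Nw(0,t)\ge Nw_m(0,t)$.

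Your aside in (iii) is mistaken, though. The identity $w(0,t)=u(0,t)/N$ together with (i) \emph{does} bound $u(0,t)$ below by $Nw_m(0,t)\to\infty$, and that is exactly the paper's one-line argument. The detour through the averaging bound is therefore harmless but unnecessary.
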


\begin{proof}
Since $u_{0, m}(r)$ is nonincreasing, 
by Lemma \ref{lem2.6} we have $(w_m)_r(r, t) \leq 0$ for 
$r \geq 0$ and $t \geq 0$, which implies that 
$x\cdot \nabla w_m(|x|, t) \leq 0$. 
Since $u_{0, m}(r) \leq u_0(r)$ for $r \geq 0$,  
we have $w_{m, 0}(r) \leq w_0(r)$ for $r \geq 0$. 
By Proposition \ref{prp2.1}, we obtain $w_m(r, t) \leq w(r, t)$ for $r \geq 0$ and $t >0$. 
Thus (i) holds. 
Since $(w_m)_r(r, t) \leq 0$ for $r \geq 0$, we obtain (ii).  
By Lemma \ref{lem2.6} we have $Nw(0, t) = u(0, t)$ for $t \geq 0$. 
Then, if $\|w_m(\cdot, t)\|_{L^{\infty}[0, \infty)} \to \infty$ as $t \to \infty$, 
by (i) and (ii) we obtain 
$\|u(\cdot, t)\|_{L^{\infty}(\R^N)} \to \infty$ as $t \to \infty$. 
Thus (iii) holds.
\end{proof}

For $\alpha > 0$, let $\phi(r, \log \alpha)$ be the solution of $(\ref{eq2.5})$.
Define $\phi_0(r) = \log u_{0, m}(r)$ for $r \geq 0$, 
where $u_{0, m}$ is defined by (\ref{eq5.1}). 

\begin{lemma}\label{lem5.4}
For any $\alpha > 0$, there exists $r_1 > 0$ such that $\phi_0(r) \geq \phi(r, \log \alpha)$ 
for $r \geq r_1$. 
\end{lemma}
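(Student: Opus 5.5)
The plan is to compare the two asymptotic expansions at infinity. Lemma \ref{lem5.2} gives the expansion of $\phi_0=\log u_{0,m}$, and Corollary \ref{cor3.2} (equivalently Proposition \ref{prp3.1}) gives the expansion of $\phi(r,\log\alpha)$. The key point is that the correction term in $\phi(r,\log\alpha)$ has a strictly positive coefficient $\ell_N(\alpha)>0$. The correction term in $\phi_0$ is $o(r^{-\lam_0})$, or $o(r^{-\lam_0}\log r)$ when $N=10$.

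Consider first $N\ge 11$ and fix $\alpha>0$. Set $\vep=\frac12(2N-4)\ell_N(\alpha)>0$. By (\ref{eq5.2}) there is $r_1$ such that for $r\ge r_1$
$$u_{0,m}(r)>\frac{2N-4}{r^2}-\vep r^{-2-\lam_0}=\frac{2N-4}{r^2}\Bigl(1-\tfrac12\ell_N(\alpha)r^{-\lam_0}\Bigr).$$
Taking logarithms and using $\log(1-x)=-x+O(x^2)$, we get
$$\phi_0(r)\ge \phi_\infty(r)-\tfrac12\ell_N(\alpha)r^{-\lam_0}+O(r^{-2\lam_0}).$$
On the other hand, (\ref{eq3.6}) gives
$$\phi(r,\log\alpha)=\phi_\infty(r)-\ell_N(\alpha)r^{-\lam_0}+o(r^{-\lam_0}).$$
Subtracting the two expansions,
$$\phi_0(r)-\phi(r,\log\alpha)\ge \tfrac12\ell_N(\alpha)r^{-\lam_0}+o(r^{-\lam_0}),$$
and the right-hand side is positive for all large $r$. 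Enlarging $r_1$ therefore gives the claim.

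The case $N=10$ is identical, with $r^{-\lam_0}$ replaced by $r^{-\lam_0}\log r$ throughout. Here one uses (\ref{eq5.3}) and (\ref{eq3.7}). The error from the logarithm expansion is $O(r^{-2\lam_0}\log^2 r)=o(r^{-\lam_0}\log r)$, so the same subtraction goes through.

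The only delicate point is not the comparison itself but the bookkeeping of the logarithm. One must check that $u_{0,m}>0$, so that $\phi_0$ is defined; this holds because $u_0$ is continuous and positive, so its minimum on $[0,r]$ is positive. One must also check that the quadratic error from expanding the logarithm is of lower order than the main correction, which holds since $\lam_0>0$. Neither is a real obstacle. The essential input is simply the positivity of $\ell_N(\alpha)$ from Proposition \ref{prp3.1}.
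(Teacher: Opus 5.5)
Your proposal is correct and is essentially the paper's argument. Both use (\ref{eq5.2}) or (\ref{eq5.3}) with a threshold proportional to $(2N-4)\ell_N(\alpha)$, take logarithms, and compare with (\ref{eq3.6}) or (\ref{eq3.7}) using $\ell_N(\alpha)>0$. The only cosmetic difference is that the paper takes the threshold $\frac14(2N-4)\ell_N(\alpha)$ and uses the explicit bound $\log(1-x)\ge -2x$ for $0<x<1/2$ in place of your $O(x^2)$ Taylor remainder; both arrive at the same margin $\frac12\ell_N(\alpha)r^{-\lam_0}$.
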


\begin{proof}
First we consider the case $N \geq 11$. 
Take $C > 0$ such that 
\begin{equation}
	C \leq \frac{1}{4}\ell_N(\alpha)(2N-4),
	\label{eq5.8}
\end{equation}
where $\ell_N(\alpha)$ is the constant in Proposition \ref{prp3.1}. 
From (\ref{eq5.2}), there exists $r_0 > 0$ such that 
$$
	r^{2+\lam_0}\left(\frac{2N-4}{r^2} - u_{0, m}(r)\right) \leq C
	\quad \mbox{for} \ r \geq r_0.
$$
Then it follows that 
$$
	u_{0, m}(r) \geq \frac{2N-4}{r^2} - Cr^{-2-\lam_0} 
	\geq \frac{2N-4}{r^2}\left(1-\frac{1}{4}\ell_N(\alpha)r^{-\lam_0}\right)
	\quad \mbox{for} \ r \geq r_0.
$$
Since $\log(1-x) \geq -2x$ for $0 < x < 1/2$, we see that 
$\phi_0(r) = \log u_{0, m}(r)$ satisfies  
$$
	\phi_0(r) \geq -2\log r + \log(2N-4) - \frac{1}{2}\ell_N(\alpha)r^{-\lam_0} 
$$
for sufficiently large $r$.
From (\ref{eq3.6}) there exists $r_1 \geq r_0$ such that 
$\phi_0(r) \geq \phi(r, \log \alpha)$ for $r \geq r_1$.

Next we consider the case $N = 10$. 
Take $C > 0$ such that (\ref{eq5.8}) holds.
From (\ref{eq5.3}), there exists $r_0 > 0$ such that 
$$
	\frac{r^{2+\lam_0}}{\log r}\left(\frac{2N-4}{r^2} - u_{0, m}(r)\right) \leq C
	\quad \mbox{for} \ r \geq r_0.
$$
Then it follows that 
$$
	u_{0, m}(r) \geq \frac{2N-4}{r^2} - Cr^{-2-\lam_0}\log r 
	\geq \frac{2N-4}{r^2}\left(1-\frac{1}{4}\ell_N(\alpha)r^{-\lam_0}\log r \right)
$$
for $r \geq r_0$.
Arguing as above, we obtain 
$$
	\phi_0(r) \geq -2\log r + \log(2N-4) - \frac{1}{2}\ell_N(\alpha)r^{-\lam_0}\log r 
$$
for sufficiently large $r$.
From (\ref{eq3.7}) there exists $r_1 \geq r_0$ such that 
$\phi_0(r) \geq \phi(r, \log \alpha)$ for $r \geq r_1$.
\end{proof}

Take $\alpha > e^{\phi_0(0)}$. Then $\log \alpha > \phi_0(0)$, and hence we have
$$
	\sup_{r \geq 0}(\phi(r, \log \alpha)-\phi_0(r)) > 0.
$$
By Lemmas \ref{lem3.4} and \ref{lem5.4}, there exists $\underline{\phi}_0 \in C^1[0, \infty)$ 
satisfying (\ref{eq3.14})--(\ref{eq3.19}) in Lemma \ref{lem3.4}.
Define $\underline{w}_0(r)$ by 
\begin{equation}
	\underline{w}_0(r) = \frac{1}{r^N}\int^r_0 s^{N-1}
	e^{\underline{\phi}_0(s)}ds
	\quad \mbox{for} \ r > 0
	\quad \mbox{and} \quad 
	\underline{w}_0(0) = \lim_{r \to 0}\underline{w}_0(r).
	\label{eq5.10}
\end{equation}
From (\ref{eq3.16}) we have 
$e^{\underline{\phi}_0(r)} \leq e^{\phi_0(r)} = u_{0, m}(r)$ for $r \geq 0$. 
From (\ref{eq5.7}) we have 
\begin{equation}
	\underline{w}_0(r) \leq w_{0, m}(r) \quad \mbox{for} \ r \geq 0.
	\label{eq5.11}
\end{equation}
Furthermore, $\underline{w}_0$ satisfies the following properties.

\begin{lemma}\label{lem5.5}
Let $\underline{w}_0$ be defined by $(\ref{eq5.10})$.
Then $\underline{w}_0 \in C^2[0, \infty)$ and it satisfies 
$\underline{w}_0'(0) = 0$, 
\begin{equation}
	\underline{w}_0'' + \frac{N+1}{r}\underline{w}_0' 
	+ r\underline{w}_0'\underline{w}_0 + N\underline{w}_0^2 \geq 0 
	\quad \mbox{for}\  r > 0,
	\label{eq5.12}
\end{equation}
\begin{equation}
	\underline{w}_0(r) = \frac{2}{r^2} - \frac{2N-4}{N-2-\lam_0}\ell_N(\alpha)r^{-2-\lam_0} 
	+ o(r^{-2-\lam_0}) 
	\quad \mbox{as} \ r \to \infty 
	\label{eq5.13}
\end{equation}
if $N \geq 11$, and 
\begin{equation}
	\underline{w}_0(r) = \frac{2}{r^2} - \frac{2N-4}{N-2-\lam_0}\ell_N(\alpha)r^{-2-\lam_0}\log r 
	+ o(r^{-2-\lam_0}\log r) 
	\quad \mbox{as} \ r \to \infty 
	\label{eq5.14}
\end{equation}
if $N = 10$.
\end{lemma}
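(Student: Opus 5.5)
We need to prove Lemma 5.5: regularity, $\underline{w}_0'(0)=0$, the subsolution inequality (5.12), and the asymptotics (5.13)–(5.14).

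\emph{Regularity.} By (3.15), $\underline{\phi}_0\equiv c$ on $[0,r_0]$. Hence $\underline{w}_0(r)=e^{c}/N$ there, which is smooth near $0$ with $\underline{w}_0'(0)=0$. For $r>0$, write $F(r)=\int_0^r s^{N-1}e^{\underline{\phi}_0(s)}ds$, so that $\underline{w}_0=r^{-N}F$. Since $\underline{\phi}_0\in C^1$, we have $F\in C^2$, and therefore $\underline{w}_0\in C^2[0,\infty)$.

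\emph{Subsolution inequality.} Set $g=e^{\underline{\phi}_0}$. The direct computation is the one behind Proposition 2.4 and the identity $u=Nw+rw_r$. It gives
$$r\underline{w}_0'=g-N\underline{w}_0,\qquad (r^{N+1}\underline{w}_0')'=r^{N}g'.$$
Hence
$$\underline{w}_0''+\frac{N+1}{r}\underline{w}_0'=\frac{g'}{r}=\frac{\underline{\phi}_0'g}{r},$$
and
$$r\underline{w}_0'\underline{w}_0+N\underline{w}_0^2=\underline{w}_0\,(r\underline{w}_0'+N\underline{w}_0)=\underline{w}_0\,g.$$
The left side of (5.12) therefore equals
$$\frac{g}{r}\bigl(\underline{\phi}_0'+r\underline{w}_0\bigr)=\frac{g}{r^{N}}\Bigl(r^{N-1}\underline{\phi}_0'+\int_0^r s^{N-1}e^{\underline{\phi}_0}ds\Bigr),$$
which is $\geq0$ by (3.17). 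This is the point where (3.17) is designed to be used, and it is essentially the only nontrivial step.

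\emph{Asymptotics.} By (3.18) and (3.6), $\underline{\phi}_0=\phi_\infty-\ell_N(\alpha)r^{-\lambda_0}+o(r^{-\lambda_0})$. Exponentiating as in Corollary 3.2 gives
$$g=\frac{2N-4}{r^2}-(2N-4)\ell_N(\alpha)r^{-2-\lambda_0}+o(r^{-2-\lambda_0}).$$
We then integrate $s^{N-1}g$ from $0$ to $r$. The leading term contributes $\frac{2N-4}{N-2}r^{N-2}=2r^{N-2}$. The correction contributes
$$-\frac{(2N-4)\ell_N(\alpha)}{N-2-\lambda_0}\,r^{N-2-\lambda_0}+o(r^{N-2-\lambda_0})+O(1).$$
Here we use $N-2-\lambda_0>0$, which holds because $\lambda_0<(N-2)/2$. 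This ensures the correction integral diverges, so the $o$-term integrates to $o(r^{N-2-\lambda_0})$ by a standard $\varepsilon$-splitting, and the bounded contribution from $[0,R_\varepsilon]$ is absorbed. The same reasoning covers the replacement of $g$ by its expansion on a compact set.

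Dividing by $r^N$ yields (5.13); the $O(r^{-N})$ remainder is $o(r^{-2-\lambda_0})$. For $N=10$ the argument is the same, using (3.19) and (3.7) together with
$$\int^r s^{N-3-\lambda_0}\log s\,ds=\frac{r^{N-2-\lambda_0}\log r}{N-2-\lambda_0}+O(r^{N-2-\lambda_0}).$$
Here $\lambda_0=4$, so $N-2-\lambda_0=4>0$. This gives (5.14).
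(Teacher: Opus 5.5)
Your proposal is correct and follows the paper's proof essentially step for step. The paper likewise differentiates $r^N\underline{w}_0=\int_0^r s^{N-1}e^{\underline{\phi}_0(s)}ds$ twice to get $N\underline{w}_0+r\underline{w}_0'=e^{\underline{\phi}_0}$ and $r\underline{w}_0''+(N+1)\underline{w}_0'=\underline{\phi}_0'e^{\underline{\phi}_0}$, uses (\ref{eq3.17}) in the form $\underline{\phi}_0'\ge -r\underline{w}_0$ (your exact identity for the left side of (\ref{eq5.12}) is a repackaging of this), and obtains (\ref{eq5.13})--(\ref{eq5.14}) by exponentiating (\ref{eq3.18})--(\ref{eq3.19}) as in Corollary \ref{cor3.2} and then integrating, where your $\varepsilon$-splitting, compact-set and logarithmic-integral details supply what the paper leaves implicit.
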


\begin{proof}
Since $\underline{\phi}_0(r)$ is a constant function for $0 \leq r \leq r_0$ 
by (\ref{eq3.15}), we have $\underline{w}_0 \in C^2[0, \infty)$ and $\underline{w}_0'(0) = 0$. 
Define $\Psi(r) = e^{\underline{\phi}_0(r)}$ for $r \geq 0$. 
Then we have $\Psi(r) > 0$ and $\Psi'(r) = \underline{\phi}_0'(r)\Psi(r)$ for $r \geq 0$.
From (\ref{eq5.10}) we have 
\begin{equation}
	r^N\underline{w}_0(r) = \int^r_0 s^{N-1}\Psi(s)ds
	\quad \mbox{for} \ r > 0.
	\label{eq5.15}
\end{equation}
Differentiating the above, we obtain 
\begin{equation}
	N\underline{w}_0(r) + r\underline{w}_0'(r) = \Psi(r) > 0
	\quad \mbox{for} \ r > 0.
	\label{eq5.16}
\end{equation}
Differentiating the above again, we obtain 
\begin{equation}
	r\underline{w}_0''(r) + (N+1)\underline{w}_0'(r) 
	= \underline{\phi}_0'(r)\Psi(r)
	= \underline{\phi}_0'(r)(N\underline{w}_0(r) + r\underline{w}_0'(r)).
	\label{eq5.17}
\end{equation}
From (\ref{eq3.17}) and (\ref{eq5.15}) we obtain 
$r^{N-1}\underline{\phi}_0'(r) + r^{N}\underline{w}_0 \geq 0$, and hence 
$\underline{\phi}_0'(r) \geq - r\underline{w}_0(r)$.
From (\ref{eq5.16}) and (\ref{eq5.17}) it follows that  
$$
	r\underline{w}_0''(r) + (N+1)\underline{w}_0'(r) 
	\geq 
	-r\underline{w}_0(r)(N\underline{w}_0(r) + r\underline{w}_0'(r))
	\quad \mbox{for} \ r > 0.
$$
Thus we obtain (\ref{eq5.12}).

We will show (\ref{eq5.13}) and (\ref{eq5.14}). 
From (\ref{eq3.18}) and (\ref{eq3.19}), we have 
$$
	\underline{\phi}_0(r) = -2\log r +\log(2N-4) -\ell_N(\alpha)r^{-\lam_0} 
	+ o(r^{-\lam_0})
	\quad \mbox{as} \ r \to \infty
$$
if $N \geq 11$ and 
$$
	\underline{\phi}_0(r) = -2\log r +\log(2N-4) -\ell_N(\alpha)r^{-\lam_0}\log r 
	+ o(r^{-\lam_0}\log r)
	\quad \mbox{as} \ r \to \infty.
$$
if $N = 10$. 
Using the same argument as in the proof of Corollary \ref{cor3.2}, we obtain 
$$
	\begin{array}{rcl}
	\dsp e^{\underline{\phi}_0(r)} & = & \dsp 
	\frac{2N-4}{r^2}e^{-\ell_N(\alpha)r^{-\lam_0} + o(r^{-\lam_0})}
	\\[2ex] & = & \dsp
	\frac{2N-4}{r^2}
	\left(1-\ell_N(\alpha)r^{-\lam_0} + o(r^{-\lam_0})\right)
	\\[2ex] 
	& = & \dsp \frac{2N-4}{r^2} -(2N-4)\ell_N(\alpha)r^{-2-\lam_0} + o(r^{-2-\lam_0})
	\end{array}
$$
as $r \to \infty$ if $N \geq 11$ and 
$$
	e^{\underline{\phi}_0(r)} =  
	\frac{2N-4}{r^2} -(2N-4)\ell_N(\alpha)r^{-2-\lam_0}\log r 
	+ o(r^{-2-\lam_0}\log r)
$$
as $r \to \infty$ if $N = 10$. 
By (\ref{eq5.10}) we obtain (\ref{eq5.13}) and (\ref{eq5.14}).
\end{proof}

We are now in a position to prove Proposition \ref{prp5.1}. 

\begin{proof}[Proof of Proposition \ref{prp5.1}] 
Let $(u_m, v_m)$ be a solution of $(1.1)$ with $u_0 = u_{0, m}$, and define 
$w_m$ by (\ref{eq5.6}). 
To prove Proposition \ref{prp5.1} it suffices to show that 
$\|w_m(\cdot, t)\|_{L^{\infty}[0, \infty)} \to \infty$ as $t \to \infty$ by Lemma \ref{lem5.3}.
We will show that, for any large $\alpha > 0$, 
\begin{equation}
	\liminf_{t \to \infty}\|w_m(\cdot, t)\|_{L^{\infty}} \geq \frac{\alpha}{N},
	\label{eq5.18}
\end{equation}
which implies that $\|w_m(\cdot, t)\|_{\infty} \to \infty$ as $t \to \infty$. 

Take any $\alpha > e^{\phi_0(0)}$, and define $\underline{w}_0$ by (\ref{eq5.10}). 
Let $\underline{w}(r, t)$ be a solution of (\ref{eq1.19}) with $w_0 = \underline{w}_0$. 
Recall that we obtain $x \cdot \nabla w_m(|x|, t) \leq 0$ in the proof of Lemma \ref{lem5.3}. 
By Proposition \ref{prp2.1} and (\ref{eq5.11}), we obtain 
\begin{equation}
	\underline{w}(r, t) \leq w_m(r, t) \quad \mbox{for} \ r \geq 0, \ t \geq 0.
	\label{eq5.19}
\end{equation}
From (\ref{eq5.12}) we see that $\underline{w}_0$ is a subsolution of (\ref{eq2.3}).
Then, by Proposition \ref{prp2.2}, $\underline{w}(r, t)$ 
is nondecreasing in $t > 0$ for each fixed $r \geq 0$. 
Define $W(r) = \lim_{t \to \infty}\underline{w}(r, t)$ for $r \geq 0$. 
In the case where $W \not\in L^{\infty}(\R^N)$, 
from (\ref{eq5.19}) we have $\lim_{t \to \infty}\|w_m(\cdot, t)\|_{L^{\infty}[0, \infty)} = \infty$. 

Let us consider the case where $W \in L^{\infty}(\R^N)$. 
In this case, Proposition \ref{prp2.3} implies that 
$W \in C^2(\R^{N+2})$ is a radial solution of (\ref{eq2.3}) in $\R^{N+2}$. 
Define $\bar{\alpha} = W(0)$. Then, by Proposition \ref{prp2.4} we have
\begin{equation}
	W(r) = \frac{1}{r^N}\int^r_0 
	s^{N-1}e^{\phi(s, \log(N\bar{\alpha}))}ds. 
	\label{eq5.20}
\end{equation}
In the case $N \geq 11$, by Corollary \ref{cor3.2} we have
$$
	e^{\phi(r, \log(N\bar{\alpha}))} =
	 \frac{2N-4}{r^2} - (2N-4)\ell_N(N\bar{\alpha})r^{-2-\lam_0} + o(r^{-2-\lam_0})
	\quad \mbox{as} \ r \to \infty.
$$
Then, from (\ref{eq5.20}) we obtain  
\begin{equation}
	W(r) = \frac{2}{r^2} - \frac{2N-4}{N-2-\lam_0}\ell_N(N\bar{\alpha})r^{-2-\lam_0} 
	+ o(r^{-2-\lam_0})
	\quad \mbox{as} \ r \to \infty.
	\label{eq5.21}
\end{equation}
Since $W(r) \geq \underline{w}_0(r)$ for $r \geq 0$, 
from (\ref{eq5.13}) and (\ref{eq5.21}) we have $\ell_N(N\bar{\alpha}) \leq \ell_N(\alpha)$. 
Since $\ell_N(\alpha)$ is decreasing for $\alpha > 0$, 
we have $N\bar{\alpha} \geq \alpha$. 
Thus we obtain 
\begin{equation}
	\liminf_{t \to \infty}\|w_m(\cdot, t)\|_{L^{\infty}[0, \infty)} 
	= W(0) = \bar{\alpha} \geq \frac{\alpha}{N},
	\label{eq5.22}
\end{equation}
hence (\ref{eq5.18}) holds.

In the case $N = 10$, by Corollary \ref{cor3.2} we have 
$$
	e^{\phi(r, \log(N\bar{\alpha}))} =
	 \frac{2N-4}{r^2} - (2N-4)\ell_N(N\bar{\alpha})r^{-2-\lam_0}\log r 
	 + o(r^{-2-\lam_0}\log r)
	\quad \mbox{as} \ r \to \infty.
$$
Thus, from (\ref{eq5.20}) we obtain 
\begin{equation}
	W(r) = \frac{2}{r^2} - \frac{2N-4}{N-2-\lam_0}\ell_N(N\bar{\alpha})r^{-2-\lam_0}\log r 
	+ o(r^{-2-\lam_0}\log r)
	\quad \mbox{as} \ r \to \infty.
	\label{eq5.23}
\end{equation}
Since $W(r) \geq \underline{w}_0(r)$ for $r \geq 0$, 
from (\ref{eq5.14}) and (\ref{eq5.23}) we have $\ell_N(N\bar{\alpha}) \leq \ell_N(\alpha)$, 
and hence $N\bar{\alpha} \geq \alpha$. 
Thus we obtain (\ref{eq5.22}). 
As a consequence, in the both cases $N \geq 11$ and $N = 10$, 
we obtain (\ref{eq5.18}) for any $\alpha > e^{\phi_0(0)}$.  
This implies that 
$\|w_m(\cdot, t)\|_{L^{\infty}[0, \infty)} \to \infty$ as $t \to \infty$.
By Lemma \ref{lem5.3} we conclude that 
$\|u(\cdot, t)\|_{L^{\infty}(\R^N)} \to \infty$ as $t \to \infty$.
\end{proof}


\section{Proof of Proposition 3.1}

\subsection{Preliminaries} 

Let us  consider the linear ordinary differential equation
\begin{equation}
	z'' + C_0z' + c(s)z = 0 \quad \mbox{for} \ s \in \R,
	\label{eq6.1}
\end{equation}
where $C_0 \geq 0$ is a constant and $c \in C(\R)$ satisfies 
\begin{equation}
	|c(s)| \leq C_1 e^{-\delta s} \quad \mbox{for} \ s \geq S_0
	\label{eq6.2}
\end{equation}
with some constants $C_1 > 0$, $\delta > 0$ and $S_0 \in \R$. 
We note that (6.1) can be written as 
\begin{equation}
	(p_0(s)z')' + p_0(s)c(s)z = 0 \quad \mbox{for} \ s \in \R,
	\label{eq6.3}
\end{equation}
where $p_0(s) = e^{C_0s}$. 
We will show the following proposition.

\begin{proposition}\label{prp6.1}
Let $z$ be a solution of $(\ref{eq6.1})$. 
\begin{itemize}
\item[{\rm (i)}] Let $C_0 > 0$. 
Then there exists some $\ell_0 \in \R$ such that 
\begin{equation}
	\lim_{s \to \infty}z(s) = \ell_0.
	\label{eq6.4}
\end{equation}
In the case $\ell_0 = 0$ in $(6.4)$,  
there exists some $\ell_1 \in \R$ such that 
\begin{equation}
	\lim_{s \to \infty}p_0(s)z(s) = \ell_1.
	\label{eq6.5}
\end{equation}
In particular, $z$ satisfies
$$
	\int^{\infty}\frac{dr}{p_0(r)z(r)^2} = \infty.
$$

\item[{\rm (ii)}] Let $C_0 = 0$. 
Then there exists some $\ell_0 \in \R$ such that 
\begin{equation}
	\lim_{s \to \infty}\frac{z(s)}{s} = \ell_0.
	\label{eq6.6}
\end{equation}
In the case $\ell_0 = 0$ in $(6.6)$, there exists some $\ell_1 \in \R$ such that 
\begin{equation}
	\lim_{s \to \infty}z(s) = \ell_1.
	\label{eq6.7}
\end{equation}
In particular, $z$ satisfies
$$
	\int^{\infty}\frac{dr}{z(r)^2} = \infty.
$$
\end{itemize}
\end{proposition}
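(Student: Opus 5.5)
The plan is to treat (\ref{eq6.1}) as a perturbation of $z''+C_0z'=0$. I will use the integrated form (\ref{eq6.3}) together with Gronwall-type estimates, exploiting the integrability of $e^{-\delta s}$ (and of $se^{-\delta s}$) coming from (\ref{eq6.2}). Throughout, fix $S\geq S_0$.

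\emph{Case (i), $C_0>0$.} Integrating (\ref{eq6.3}) from $S$ gives
$$z'(s)=e^{-C_0s}\Bigl(A-\int_S^s e^{C_0\sigma}c(\sigma)z(\sigma)\,d\sigma\Bigr),\qquad A=p_0(S)z'(S).$$
\begin{itemize}
\item[(a)] Boundedness. Integrating $|z'|$ over $[S,s]$ and using Fubini, $\int_\sigma^s e^{-C_0\tau}d\tau\le e^{-C_0\sigma}/C_0$ gives
$$|z(s)|\le |z(S)|+\frac{|A|}{C_0}+\frac{C_1}{C_0}\int_S^s e^{-\delta\sigma}|z(\sigma)|\,d\sigma.$$
Gronwall then shows that $z$ is bounded.
\item[(b)] Existence of $\ell_0$. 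Once $z$ is bounded, the bracket in the formula for $z'$ is $O(1+e^{(C_0-\delta)s})$, so $z'\in L^1(S,\infty)$. Hence $z\to\ell_0$, which is (\ref{eq6.4}).
\item[(c)] Decay bootstrap when $\ell_0=0$. Write $z(s)=-\int_s^\infty z'$. If $|z|\le Ke^{-\gamma s}$, then inserting this into the formula for $z'$ and integrating from $s$ to $\infty$ gives $|z(s)|\le K'e^{-\min(C_0,\gamma+\delta')s}$, where $0<\delta'<\delta$ is chosen so that $\gamma+\delta'\neq C_0$. I iterate finitely many times until $\gamma+\delta'>C_0$.
\item[(d)] Existence of $\ell_1$. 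At that point $e^{C_0\sigma}cz$ is integrable with tail $O(e^{-\eta s})$ for some $\eta>0$. So $B=A-\int_S^\infty e^{C_0\sigma}cz\,d\sigma$ exists and $z'(s)=e^{-C_0s}(B+O(e^{-\eta s}))$. Integrating from $s$ to $\infty$ gives $p_0(s)z(s)\to \ell_1:=B/C_0$, which is (\ref{eq6.5}).
\end{itemize}

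\emph{Case (ii), $C_0=0$.} Here $z''=-cz$.
\begin{itemize}
\item[(a)] Linear growth. Writing $z(s)=z(S)+z'(S)(s-S)-\int_S^s(s-\sigma)c(\sigma)z(\sigma)\,d\sigma$ and dividing by $s$ gives, for $y(s)=|z(s)|/s$,
$$y(s)\le C+C_1\int_S^s \sigma e^{-\delta\sigma}y(\sigma)\,d\sigma.$$
Since $\sigma e^{-\delta\sigma}$ is integrable, Gronwall gives $|z|\le Cs$.
\item[(b)] Existence of $\ell_0$. Then $cz\in L^1$, so $z'(s)=z'(S)-\int_S^s cz\to\ell_0$, and hence $z(s)/s\to\ell_0$, which is (\ref{eq6.6}).
\item[(c)] Existence of $\ell_1$. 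If $\ell_0=0$, then $z'(s)=\int_s^\infty cz=O(se^{-\delta s})$ is integrable, so $z\to\ell_1$, which is (\ref{eq6.7}).
\end{itemize}

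\emph{The ``in particular'' clauses.} These concern nontrivial $z$ in the decaying case: $\ell_0=0$ in (i), and the case where $z$ stays bounded in (ii). (If $\ell_0\ne0$ in (i), the integral in fact converges, so that clause must be read in this sense, and I would state it so.) It suffices to show $\ell_1\neq0$. Then $1/(p_0z^2)\sim e^{C_0 s}/\ell_1^2$ in case (i), and $1/z^2\to 1/\ell_1^2$ in case (ii), and both integrals diverge.

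To show $\ell_1\neq 0$, suppose $\ell_1=0$, which means $B=0$ in case (i), or $z(s)=\int_s^\infty(\sigma-s)c(\sigma)z(\sigma)\,d\sigma$ in case (ii). Then $z$ satisfies a Volterra equation posed from infinity. In case (i), for instance,
$$|z(s)|\le \frac{C_1}{C_0}\int_s^\infty e^{-\delta\sigma}|z(\sigma)|\,d\sigma.$$
Taking $M(s)=\sup_{[s,\infty)}|z|$ gives $M(s)\le \frac{C_1}{C_0\delta}e^{-\delta s}M(s)$, and the analogous bound with $\sigma e^{-\delta\sigma}$ holds in case (ii). Hence $M(s)=0$ for $s$ large, so $z\equiv0$ near infinity, and uniqueness for the linear ODE forces $z\equiv0$.

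I expect the main obstacle to be the bootstrap step in case (i). It upgrades $z\to0$ to the sharp rate $e^{-C_0s}$, and one must avoid the resonant case $\gamma+\delta=C_0$ by shrinking $\delta$ slightly. The nondegeneracy $\ell_1\neq0$ also needs the careful sup-from-infinity argument above.
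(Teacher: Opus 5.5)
Your argument is correct in substance, but it takes a different route from the paper. The paper never estimates the given solution directly. It first builds one special solution $z_1$ with $\frac12\le z_1\le\frac32$ and $z_1\to1$, by a contraction mapping on a tail $[S_1,\infty)$ (Lemma~\ref{lem6.2}). It then gets a second solution by reduction of order: $z_2=z_1\int_s^\infty\frac{dr}{p_0z_1^2}$ with $p_0z_2\to1/C_0$ if $C_0>0$, and $z_2=z_1\int_{S_1}^s\frac{dr}{z_1^2}$ with $z_2/s\to1$ if $C_0=0$. Writing $z=c_1z_1+c_2z_2$, every limit is read off from the coefficients. Nondegeneracy also comes for free: $\ell_1=c_2/C_0$ (resp.\ $\ell_1=c_1$) vanishes only if $z\equiv0$. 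Your integrated-equation, Gronwall and bootstrap approach avoids constructing a fundamental system and gives explicit error rates. The price is the resonance bookkeeping in case (i) and a separate uniqueness-from-infinity argument. The paper's decomposition is shorter, and it exhibits directly the principal-solution property $\int^\infty dr/(p_0z^2)=\infty$ that Lemma~\ref{lem6.6} and Proposition~\ref{prp6.3} rely on.

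One step needs repair. In your nondegeneracy argument for case (i), setting $B=0$ and applying Fubini actually gives
\[
|z(s)|\le\frac{1}{C_0}\int_s^\infty\bigl(e^{C_0(\sigma-s)}-1\bigr)|c(\sigma)||z(\sigma)|\,d\sigma .
\]
This kernel is unbounded in $\sigma$, so the inequality you display for $z$ does not follow. Multiplying by $p_0(s)$ gives the correct version,
\[
p_0(s)|z(s)|\le\frac{C_1}{C_0}\int_s^\infty e^{-\delta\sigma}p_0(\sigma)|z(\sigma)|\,d\sigma .
\]
Your sup argument then works with $M(s)=\sup_{[s,\infty)}p_0|z|$, which is finite since $p_0z\to\ell_1$.

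However, this step can simply be dropped. In case (i), once $|p_0z|\le K$ near infinity, you have $\frac{1}{p_0z^2}=\frac{p_0}{(p_0z)^2}\ge\frac{e^{C_0r}}{K^2}$. In case (ii), once $|z|\le K$, you have $\frac1{z^2}\ge\frac1{K^2}$. Either way the integral diverges without knowing $\ell_1\ne0$.

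There are also two harmless sign slips. First, $p_0z\to-B/C_0$, not $B/C_0$. Second, in case (ii) with $\ell_0=\ell_1=0$, the identity is $z(s)=-\int_s^\infty(\sigma-s)c(\sigma)z(\sigma)\,d\sigma$.
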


To prove Proposition \ref{prp6.1}, we need the following lemma.

\begin{lemma}\label{lem6.2}
There exists a constant $S_1 \in \R$ such that 
$(\ref{eq6.1})$ has a positive solution $z_1$ on $[S_1, \infty)$ satisfying 
\begin{equation}
	\frac{1}{2} \leq z_1(s) \leq \frac{3}{2} \quad \mbox{for} \ s \geq S_1
	\quad \mbox{and} \quad 
	\lim_{s \to \infty}z_1(s) = 1.
	\label{eq6.8}
\end{equation}
\end{lemma}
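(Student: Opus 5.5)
We will construct $z_1$ as a fixed point of an integral operator obtained by variation of constants, treating the term $c(s)z$ as a small perturbation of $z''+C_0z'=0$. The bounded solution of the unperturbed equation is $z\equiv 1$, and we seek $z_1 = 1 + \eta$ with $\eta(s)\to 0$ as $s\to\infty$.

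\textbf{Integral equation.} Using the form (\ref{eq6.3}), we integrate from $s$ to $\infty$ twice, imposing $z'(\infty)=0$ and $z(\infty)=1$. This leads us to consider, on $[S_1,\infty)$,
$$
	(Tz)(s) = 1 - \int_s^{\infty}\frac{1}{p_0(\tau)}\int_\tau^{\infty}p_0(\sigma)c(\sigma)z(\sigma)\,d\sigma\,d\tau .
$$
A fixed point satisfies $p_0 (Tz)' = -\int_s^\infty p_0 c z\,d\sigma$. Differentiating once more gives $(p_0 z')' = p_0 c z$, so we must adjust the sign to match (\ref{eq6.3}). The correct operator is
$$
	(Tz)(s) = 1 + \int_s^{\infty}\frac{1}{p_0(\tau)}\int_\tau^{\infty}p_0(\sigma)c(\sigma)z(\sigma)\,d\sigma\,d\tau ,
$$
for which $(p_0 (Tz)')' = -p_0cz$, as required.

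\textbf{Convergence of the double integral.} Take $X=\{z\in C[S_1,\infty): 1/2\le z\le 3/2\}$ with the sup norm; this is a closed subset of a Banach space. For $z\in X$, the inner integral is controlled by (\ref{eq6.2}):
$$
	\frac{1}{p_0(\tau)}\int_\tau^\infty e^{C_0\sigma}C_1e^{-\delta\sigma}\tfrac32\,d\sigma .
$$
If $C_0<\delta$, this equals $\tfrac32 C_1 e^{-\delta\tau}/(\delta-C_0)$. If $C_0\ge\delta$, the inner integral diverges, and this is the main obstacle. To handle it, we reverse the order of integration in the inner variable and write instead
$$
	(Tz)(s)=1+\int_s^\infty\frac{1}{p_0(\tau)}\int_{S_1}^{\tau}\cdots
$$
This does not give decay either. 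The fix is to use Fubini on the original form, which gives the kernel
$$
	\int_s^\infty K(s,\sigma)c(\sigma)z(\sigma)\,d\sigma, \qquad K(s,\sigma)=\int_s^\sigma \frac{p_0(\sigma)}{p_0(\tau)}\,d\tau .
$$
For $C_0>0$ we have $K(s,\sigma)=(1-e^{-C_0(\sigma-s)})/C_0\le 1/C_0$, and for $C_0=0$ we have $K=\sigma-s\le\sigma$.

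\textbf{The working operator.} We therefore define
$$
	(Tz)(s)=1+\int_s^\infty K(s,\sigma)c(\sigma)z(\sigma)\,d\sigma .
$$
Since $|c(\sigma)|\le C_1e^{-\delta\sigma}$, the integral is bounded by $\tfrac32 C_1\int_s^\infty(1/C_0+\sigma)e^{-\delta\sigma}d\sigma$. This bound is finite and tends to $0$ as $s\to\infty$, independently of $z\in X$. We choose $S_1\ge S_0$ so large that the bound is at most $1/2$ for $s\ge S_1$. Then $T$ maps $X$ into $X$, and $Tz(s)\to 1$. The same estimate with $z$ replaced by $z-\tilde z$ shows that $T$ is a contraction once $\tfrac32$ is replaced by $1$ and the integral is at most $1/2$.

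\textbf{Conclusion.} Banach's fixed point theorem yields $z_1\in X$, which gives the bounds in (\ref{eq6.8}). Differentiating under the integral, we get $z_1'(s)=-\int_s^\infty \partial_sK\, c z_1\,d\sigma$ with $\partial_s K(s,\sigma)=-p_0(\sigma)/p_0(s)$, using $K(s,s)=0$. Hence $p_0z_1'=\int_s^\infty p_0cz_1\,d\sigma$ and $(p_0z_1')'=-p_0cz_1$, which is (\ref{eq6.3}). Finally, $z_1(s)\to1$ follows from the tail bound.

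The delicate point is the choice of kernel, arranged to avoid the divergent $\int_\tau^\infty p_0|c|$ when $C_0\ge\delta$. After Fubini the kernel stays bounded (or grows linearly when $C_0=0$), and this is absorbed by the exponential decay of $c$.
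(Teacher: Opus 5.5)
There is a genuine gap for $C_0>0$, and it lies in the kernel. With $p_0(s)=e^{C_0s}$ one has $K(s,\sigma)=\int_s^\sigma \frac{p_0(\sigma)}{p_0(\tau)}\,d\tau=\frac{e^{C_0(\sigma-s)}-1}{C_0}$, which grows exponentially in $\sigma-s$. The bounded expression $(1-e^{-C_0(\sigma-s)})/C_0$ that you wrote is $\int_s^\sigma p_0(\tau)/p_0(\sigma)\,d\tau$, not $K$. Since $K\ge 0$, Tonelli gives $\int_s^\infty K|c|\,d\sigma=\int_s^\infty p_0(\tau)^{-1}\int_\tau^\infty p_0|c|\,d\sigma\,d\tau$. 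So exchanging the order of integration cannot remove the divergence you correctly spotted: under (\ref{eq6.2}) with $\delta\le C_0$ your bound is infinite, and for $|c|$ of exact order $e^{-\delta\sigma}$ the operator is not even defined.

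If you keep the bounded kernel instead, a fixed point solves $z''-C_0z'\pm cz=0$, not (\ref{eq6.1}). Your final verification only appears to work because it uses $\partial_sK=-p_0(\sigma)/p_0(s)$, which is the derivative of the exponential kernel, not of the bounded one. This is not a corner case. In Lemmas \ref{lem6.5} and \ref{lem6.6} the coefficient $c$ decays like $e^{-\delta s}$ only for some $\delta<\lambda_0$, and $\lambda_0\le C_0=\sqrt{(N-2)(N-10)}$ for every $N\ge 11$. So $\delta<C_0$ is exactly the situation in which the lemma is used.

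The missing idea is that for $C_0>0$ you should not impose $p_0z'\to 0$ at infinity. The integral $\int^\infty p_0cz$ need not exist, and $p_0z_1'$ may genuinely grow like $e^{(C_0-\delta)s}$. Instead, anchor the inner integral at the finite point $S_1$, i.e.\ prescribe $z'(S_1)=0$. After shrinking $\delta$ so that $0<\delta<C_0$, you get $p_0(\tau)^{-1}\int_{S_1}^\tau p_0|c|\,dr\le \frac{C_1}{C_0-\delta}e^{-\delta\tau}$, which is integrable. Then $Fz(s)=1+\int_s^\infty p_0(\tau)^{-1}\int_{S_1}^\tau p_0(r)c(r)z(r)\,dr\,d\tau$ maps $\{1/2\le z\le 3/2\}$ into itself. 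It is a contraction with constant $\frac{C_1}{\delta(C_0-\delta)}e^{-\delta S_1}$, and it satisfies $(p_0(Fz)')'=-p_0cz$. This is precisely the operator you wrote down and discarded with ``this does not give decay either''. It does, and it is the paper's operator.

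For $C_0=0$ your construction is fine and coincides with the paper's, provided you use the minus sign. Your first operator $1-\int_s^\infty p_0^{-1}\int_\tau^\infty p_0cz$ already had the right sign; the ``corrected'' plus sign gives $(p_0z')'=+p_0cz$. Your final check hides this through a second slip: since $K(s,s)=0$, one has $\frac{d}{ds}\int_s^\infty K(s,\sigma)c z\,d\sigma=+\int_s^\infty\partial_sK\,cz\,d\sigma$.
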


\begin{proof}
First we consider the case $C_0 > 0$. 
We may assume that the constant $\delta > 0$ in (\ref{eq6.2}) satisfies
$\delta < C_0$ without loss of generality. 
Take $S_1 \geq S_0$ such that 
$$
	\frac{C_1}{\delta(C_0-\delta)}e^{-S_1} \leq \frac{1}{3}.
$$
Since $p_0(s)|c(s)| \leq C_1e^{(C_0-\delta)s}$ for $s \geq S_0$, 
we have 
\begin{equation}
	\int^{\infty}_{S_1}\frac{1}{p_0(s)}\int^s_{S_1}p_0(r)|c(r)|drds \leq 
	\frac{C_1}{C_0-\delta}\int^{\infty}_{S_1}e^{-\delta s}ds \leq \frac{1}{3}.
	\label{eq6.9}
\end{equation}
Define the set $Z \subset C[S_1, \infty)$ and the mapping $F: Z \to C[S_1, \infty)$ by
\begin{equation}
	\textstyle
	Z = \{z \in C[S_1, \infty): \frac{1}{2} \leq z(s) \leq \frac{3}{2} \ \mbox{for} \ 
	s \geq S_1\}
	\label{eq6.10}
\end{equation}
and 
$$
	Fz(s) = 1 +\int^{\infty}_s \frac{1}{p_0(\sigma)}
	\int^{\sigma}_{S_1} p_0(r)c(r)z(r)dr d\sigma
	\quad \mbox{for} \ s \geq S_1,
$$
respectively.
Let $z \in Z$. From (\ref{eq6.9}) we have 
$$
	|Fz(s)-1| \leq 
	\int^{\infty}_{s}\frac{1}{p_0(\sigma)}
	\int^{\sigma}_{S_1}p_0(r)|c(r)|z(r)drd\sigma \leq 
	\frac{1}{2} \quad \mbox{for} \ s \geq S_1,
$$
which implies that $Fz \in Z$. 

Define $\|z\| = \sup_{s \geq S_1}|z(s)|$. 
For $z_1, z_2 \in Z$ we have 
$$
	\begin{array}{rcl}
	\|Fz_1 -Fz_2\| & \leq &
	\dsp
	\int^{\infty}_{S_1}\frac{1}{p_0(s)}\int^s_{S_1}p_0(r)|c(r)||z_1(r)-z_2(r)|drds 
	\\[2ex]
	& \leq &
	\dsp
	\int^{\infty}_{S_1}\frac{1}{p_0(s)}\int^s_{S_1}p_0(r)|c(r)|drds\|z_1-z_2\|
	\leq \frac{1}{3}\|z_1-z_2\|.
	\end{array}
$$
Thus $F$ is contractive on $Z$.
Then, by the contractive fixed point theorem, 
there exists $z_1 \in Z$ such that $z_1 = Fz_1$, i.e., 
$$
	z_1(s) = 1 +\int^{\infty}_s \frac{1}{p_0(\sigma)}
	\int^{\sigma}_{S_1} p_0(r)c(r)z_1(r)dr d\sigma
	\quad \mbox{for} \ s \geq S_1.
$$
From this formula it follows that $z_1$ is 
a positive solution of (\ref{eq6.1}) satisfying (\ref{eq6.8}). 

Next we consider the case $C_0 = 0$. 
In this case, $p_0(s) \equiv 1$. Take $S_1 \geq S_0$ such that 
$$
	\frac{C_1}{\delta^2}e^{-S_1} \leq \frac{1}{3}.
$$
Since $p_0(s)|c(s)| = |c(s)| \leq C_1e^{-\delta s}$ for $s \geq S_0$, 
we have 
$$
	\int^{\infty}_{S_1}\frac{1}{p_0(s)}\int^{\infty}_s p_0(r)|c(r)|drds \leq 
	\frac{C_1}{\delta}\int^{\infty}_{S_1}e^{-\delta s}ds \leq \frac{1}{3}.
$$
Define the set $Z \subset C[S_1, \infty)$ and the mapping $F: Z \to C[S_1, \infty)$ 
by (\ref{eq6.10}) and 
$$
	Fz(s) = 1 -\int^{\infty}_s \frac{1}{p_0(\sigma)}
	\int^{\infty}_{\sigma} p_0(r)c(r)z(r)drd\sigma
	\quad \mbox{for} \ s \geq S_1,
$$
respectively.
Then by the same argument as above, we see that $FZ \subset Z$ and 
$F$ is contractive on $Z$. 
Then, by the contractive fixed point theorem, we obtain 
a positive solution of (\ref{eq6.1}) satisfying (\ref{eq6.8}).
\end{proof}

\begin{proof}[Proof of Proposition \ref{prp6.1}]  
First we consider the case $C_0 > 0$. 
In this case, define $z_2(s)$ by 
$$
	z_2(s) = z_1(s)\int^{\infty}_s \frac{dr}{p_0(r)z_1(r)^2}
	\quad \mbox{for} \ s \geq S_1.
$$
By a direct calculation, $z_2$ satisfies (\ref{eq6.1}) for $s \geq S_1$.
Recall that $p_0(s) = e^{C_0 s}$. 
By  L'Hospital's rule, we have 
$$
	\lim_{s \to \infty}\frac{1}{e^{-C_0 s}}\int^{\infty}_s \frac{dr}{p_0(r)z_1(r)^2} 
	= \lim_{s \to \infty}\frac{1}{C_0}\frac{e^{C_0 s}}{p_0(s)z_1(s)^2} = \frac{1}{C_0}.
$$	
Then we obtain 
$$
	\lim_{s \to \infty}p_0(s)z_2(s) = \lim_{s \to \infty}
	\frac{z_1(s)}{e^{-C_0 s}}\int^{\infty}_s \frac{dr}{p_0(r)z_1(r)^2} = \frac{1}{C_0}.
$$
Let $z$ be a solution of (\ref{eq6.1}). 
Since $z_1$ and $z_2$ are linearly independent, there exist constants $c_1$ and $c_2$ such that 
\begin{equation}
	z(s) = c_1z_1(s) + c_2 z_2(s).
	\label{eq6.11}
\end{equation}
Since $z_1(s) \to 1$ and $p_0(s)z_2(s) \to \frac{1}{C_0}$ as $s \to \infty$, 
we have $z(s) \to c_1$ as $s \to \infty$. 
Hence, (\ref{eq6.4}) holds with $\ell_0 = c_1$. 
If $c_1 = \ell_0 = 0$, then (\ref{eq6.5}) holds with $\ell_1 = c_2/C_0$.

Next we consider the case $C_0 = 0$. 
In this case, define $z_2(s)$ by 
$$
	z_2(s) = z_1(s)\int^{s}_{S_1} \frac{dr}{z_1(r)^2}
	\quad \mbox{for} \ s \geq S_1.
$$
By a direct calculation, $z_2$ satisfies (\ref{eq6.1}) for $s \geq S_1$.
By L'Hospital's rule, we have 
$$
	\lim_{s \to \infty}\frac{1}{s}\int^{s}_{S_1} \frac{dr}{z_1(r)^2} 
	= \lim_{s \to \infty}\frac{1}{z_1(s)^2} = 1.
$$	
Then we obtain 
$$
	\lim_{s \to \infty}\frac{z_2(s)}{s} = \lim_{s \to \infty}
	\frac{z_1(s)}{s}\int^{\infty}_s \frac{dr}{z_1(r)^2} = 1.
$$
Let $z$ be a solution of (\ref{eq6.1}). 
Since $z_1$ and $z_2$ are linearly independent, there exist constants $c_1$ and $c_2$ such that 
(\ref{eq6.11}) holds.
Since $z_1(s) \to 1$ and $\frac{z_2(s)}{s} \to 1$ as $s \to \infty$, 
we have $\frac{z(s)}{s} \to c_2$ as $s \to \infty$. 
Hence, (\ref{eq6.6}) holds with $\ell_0 = c_2$. 
If $c_2 = \ell_0 = 0$, then (\ref{eq6.7}) holds with $\ell_1 = c_1$.
\end{proof}

We recall the result by \cite[Proposition A.1]{Naia}. 

\begin{proposition}\label{prp6.3}
Let us consider two differential equations 
\begin{equation}
	(p(r)u')' + q(r)u = 0 \quad \mbox{for} \ r > 0,
	\label{eq6.12}
\end{equation}
\begin{equation}
	(p(r)v')' + Q(r)v = 0 \quad \mbox{for} \ r > 0,
	\label{eq6.13}
\end{equation}
where $p, q, Q \in C[0, \infty)$ satisfy $p(r) > 0$ for $r \in (0, \infty)$ and 
$q(r) \leq Q(r)$ for $r \in [0, \infty)$ with $q \not\equiv Q$. 
Assume that $(\ref{eq6.12})$ has a solution $u(r) > 0$ for $r \geq 0$ such that 
$u'(0) = 0$ and 
$$
	\int^{\infty}_1\frac{ds}{p(s)u(s)^2} = \infty.
$$
Let $v$ be a solution of $(\ref{eq6.13})$ satisfying $v(0) > 0$ and $v'(0) = 0$. 
Then $v$ has at least one zero in $(0, \infty)$. 
\end{proposition}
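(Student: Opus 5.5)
This is a Sturm-type comparison, and I would prove it by contradiction. Assume $v>0$ on $[0,\infty)$. Since $v(0)>0$ and $v$ is continuous, this is the only alternative to $v$ having a zero.

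The basic object is the Picone/Wronskian quantity
$$
	\omega(r) = p(r)\bigl(u'(r)v(r) - u(r)v'(r)\bigr).
$$
Using (\ref{eq6.12}) and (\ref{eq6.13}), one computes
$$
	\omega'(r) = (pu')'v - (pv')'u = (Q(r)-q(r))\,u(r)v(r).
$$
The boundary values at the origin are $u'(0)=v'(0)=0$, and $p$ is continuous at $0$, so $\omega(0)=0$. Since $Q\ge q$ and $u,v>0$, $\omega$ is nondecreasing. Because $q\not\equiv Q$ and both are continuous, $Q-q>0$ on some interval. Hence there exist $r_0>0$ and $\eta>0$ with $\omega(r)\ge \eta$ for all $r\ge r_0$.

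Next I would convert this into a statement about the ratio $v/u$. We have
$$
	\left(\frac{v}{u}\right)' = \frac{uv'-u'v}{u^2} = -\frac{\omega}{p\,u^2}.
$$
Integrating from $r_0$ to $r$ gives
$$
	\frac{v(r)}{u(r)} \le \frac{v(r_0)}{u(r_0)} - \eta\int_{r_0}^{r}\frac{ds}{p(s)u(s)^2}.
$$
By the hypothesis $\int^\infty_1 ds/(p u^2)=\infty$, the right-hand side tends to $-\infty$. Therefore $v/u$ becomes negative for large $r$. Since $u>0$, this means $v$ becomes negative, which contradicts $v>0$. So $v$ must vanish somewhere in $(0,\infty)$.

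The main point needing care is the behavior at $r=0$. The coefficient $p$ may vanish there (as $p=r^{N-1}$ does in the intended application), so I would justify $\omega(0)=0$ through continuity: $\omega(r)\to 0$ as $r\to 0^+$, because $u',v'\to0$, $u,v$ stay bounded and $p$ is continuous on $[0,\infty)$. I would then integrate $\omega'$ on $(\varepsilon,r)$ and let $\varepsilon\to0$, so the identity holds on $(0,\infty)$ without any regularity of $p$ at $0$. The rest is routine.
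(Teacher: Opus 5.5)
Your proof is correct and complete. Under the assumption $v>0$, the Wronskian $\omega=p(u'v-uv')$ tends to $0$ at the origin. It is nondecreasing because $\omega'=(Q-q)uv\ge 0$, and it is eventually bounded below by some $\eta>0$ because $Q-q$ is continuous, nonnegative and positive on a small interval. The identity $(v/u)'=-\omega/(pu^2)$ together with $\int^\infty ds/(pu^2)=\infty$ then forces $v/u\to-\infty$, which contradicts $v>0$. You were right to flag the origin, since in the application $p=r^{N-1}$ vanishes there, and the limiting argument on $(\varepsilon,r)$ handles it.

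The paper gives no proof of Proposition~\ref{prp6.3}; it quotes the statement from Proposition A.1 of Naito's earlier work. So there is no argument in the text to compare yours with. What you wrote is the standard Sturm--Picone comparison argument. The hypothesis $\int^\infty ds/(pu^2)=\infty$ is exactly the condition that $u$ is the principal solution at infinity, and your argument makes the paper self-contained on this point.

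Two minor remarks. First, replacing $\int_1^\infty$ by $\int_{r_0}^\infty$ is harmless only because $r_0>0$: the integrand $1/(pu^2)$ is then continuous on the compact interval between $r_0$ and $1$. Second, for $\omega(0^+)=0$ you do not need continuity of $u',v'$ at $0$. Since $(pu')'=-qu$ is bounded near $0$, the function $pu'$ has a limit $L$ as $r\to0^+$. The mean value theorem together with $u'(0)=0$ forces $L=0$: if $p(0)>0$, then $u'\to L/p(0)$ must equal $0$; if $p(0)=0$ and $L\neq0$, then $|u'|\to\infty$, which is incompatible with $u'(0)=0$. The same argument applies to $pv'$.
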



\subsection{Proof of Proposition \ref{prp3.1}} 

By the change of the variable
\begin{equation}
	w(s, \log \alpha) = \phi(r, \log \alpha)- \phi_{\infty}(r) \quad \mbox{with} 
	\ s = \log r,
	\label{eq6.14}
\end{equation}
the function $w(s) = w(s, \log \alpha)$ satisfies
\begin{equation}
	w'' + (N-2)w' + 2(N-2)(e^w -1) = 0 \quad \mbox{for} \  -\infty < s < \infty.
	\label{eq6.15}
\end{equation}
From (\ref{eq3.3}) and (\ref{eq3.5}) we have $\lim_{s \to \infty}w(s, \log \alpha) = 0$ and
\begin{equation}
	w(s, \log \alpha) < w(s, \log \beta) < 0 \quad \mbox{for} \ s \in \R 
	\quad \mbox{if} \ 0 < \alpha < \beta.
	\label{eq6.16}
\end{equation}
Then (\ref{eq6.15}) can be written as 
\begin{equation}
	w'' + (N-2)w' + 2(N-2)h(w)w = 0 \quad \mbox{for} \  -\infty < s < \infty.
	\label{eq6.17}
\end{equation}
where 
\begin{equation}
	h(w) = \frac{e^w -1}{w}.
	\label{eq6.18}
\end{equation}
Let $N \geq 10$, and let $\delta \in (0,1)$. 
Define
\begin{equation}
	\lam_0(\delta) = \frac{N-2-\sqrt{(N-2)(N-2-8\delta)}}{2}
	\label{eq6.19}
\end{equation}
and 
$$
	\lam_1(\delta) = \frac{N-2+\sqrt{(N-2)(N-2-8\delta)}}{2}.
$$
Note that $0 < \lam_0(\delta) < \lam_1(\delta)$ and  $-\lam_0(\delta)$ and $-\lam_1(\delta)$ 
are roots of a polynomial 
\begin{equation}
	\lam^2 +(N-2)\lam + 2\delta(N-2) = 0.
	\label{eq6.20}
\end{equation}
We first show the following lemma.

\begin{lemma}\label{lem6.4}
Let $w(s, \log \alpha)$ be defined by $(\ref{eq6.14})$. 
For $\delta \in (0, 1)$, define $\lam_0(\delta)$ by $(\ref{eq6.19})$.
Then $w(s, \log \alpha)$ satisfies
\begin{equation}
	|w(s, \log \alpha)| = O(e^{-\lam_0(\delta)s}) \quad \mbox{as} \ s \to \infty.
	\label{eq6.21}
\end{equation}
\end{lemma}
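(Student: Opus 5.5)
We prove Lemma \ref{lem6.4} with a comparison argument for (\ref{eq6.17}): we show that $w$ eventually lies above an explicit exponential subsolution. Since $w(s) = w(s,\log\alpha) < 0$ and $w(s) \to 0$ as $s \to \infty$, we have $h(w(s)) \to 1$. Moreover, for $w<0$ we have $0 < h(w) < 1$, because $e^w - 1 > w$. Fix $\delta \in (0,1)$ and take $S_0$ such that $h(w(s)) \geq \delta$ for $s \geq S_0$. Then (\ref{eq6.17}) together with $w<0$ gives
$$
	w'' + (N-2)w' + 2\delta(N-2)w = 2(N-2)(\delta - h(w))w \geq 0 \quad \mbox{for} \ s \geq S_0 .
$$
So $w$ is a subsolution of the constant-coefficient operator $L z = z'' + (N-2)z' + 2\delta(N-2)z$. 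By (\ref{eq6.20}) the characteristic roots of $L$ are $-\lam_0(\delta)$ and $-\lam_1(\delta)$; they are real and distinct because $N-2-8\delta > 0$ when $N \geq 10$.

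Next we compare $w$ with $\psi(s) = -Ae^{-\lam_0(\delta)s}$, which satisfies $L\psi = 0$. Factor $L = (D+\lam_0)(D+\lam_1)$, where $D = d/ds$ and $\lam_i = \lam_i(\delta)$. Set $g = w' + \lam_1 w$, which gives $(D+\lam_0)g \geq 0$. Hence $(e^{\lam_0 s} g)' \geq 0$, so $e^{\lam_0 s}g(s) \geq e^{\lam_0 S_0} g(S_0) =: -B$ for $s \geq S_0$. That is, $(e^{\lam_1 s}w)' \geq -B e^{(\lam_1-\lam_0)s}$. Integrating from $S_0$ to $s$ and using $\lam_1 > \lam_0$, we get
$$
	e^{\lam_1 s}w(s) \geq e^{\lam_1 S_0}w(S_0) - \frac{B}{\lam_1 - \lam_0}\, e^{(\lam_1-\lam_0)s}.
$$
Multiplying by $e^{-\lam_1 s}$ gives $w(s) \geq -C e^{-\lam_0(\delta)s}$ for large $s$. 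Combined with $w<0$, this yields $|w(s)| = O(e^{-\lam_0(\delta)s})$, which is (\ref{eq6.21}). The only information this uses is the sign of $w$ and $w \to 0$, both supplied by (\ref{eq6.16}) and (\ref{eq3.3}). If $B \leq 0$, the bound is only better.

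The main point to check is that the factorization argument really does use only the sign inequality $Lw \geq 0$, with no a priori bound on $w'$. It does, because the two first-order integrations each preserve the inequality direction: the integrating factors are positive and we only integrate forward in $s$. So no maximum principle on an unbounded interval is needed. The one delicate step is the strict inequality $h(w) < 1$ for $w<0$, together with choosing $\delta < 1$ so that $h(w(s)) \geq \delta$ eventually. The lemma is stated for every $\delta \in (0,1)$, and $\lam_0(\delta) \to \lam_0$ as $\delta \to 1$. This near-optimal decay is presumably what will later allow the perturbation $c(s) = 2(N-2)(h(w)-1)$ to be shown exponentially small, as required in (\ref{eq6.2}).
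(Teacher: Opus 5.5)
Your argument is correct and is essentially the paper's proof. Both use $w<0$ and $h(w(s))\ge\delta$ for large $s$ to obtain $w''+(N-2)w'+2\delta(N-2)w\ge 0$, and then integrate the factored constant-coefficient operator twice with positive integrating factors; the only difference is the order of factoring (the paper first integrates $(e^{\lambda_1(\delta)s}(w'+\lambda_0(\delta)w))'>0$, while you start from $g=w'+\lambda_1(\delta)w$), which is immaterial.
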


\begin{proof} 
For simplicity, we denote $w(s) = w(s, \log \alpha)$. 
Note that $h(w)$, defined by (\ref{eq6.18}), 
is increasing in $w \in \R$, $h(w) \to 1$ as $w \to 0$. 
Since $w(s) < 0$ for $s \in \R$ and $w(s) \to 0$ as $s \to \infty$, 
there exists $s_1 \in \R$ such that 
$\delta < h(w(s)) < 1$ for $s \geq s_1$. 
From (\ref{eq6.17}) we see that $w$ satisfies 
\begin{equation}
	w'' + (N-2)w' + 2(N-2)\delta w > 0 \quad \mbox{for} \  s \geq s_1.
	\label{eq6.22}
\end{equation}
Since $-\lam_0(\delta)$ and $-\lam_1(\delta)$ are roots of the polynomial (\ref{eq6.20}),
the inequality (\ref{eq6.22}) can be written as 
$$
	(w'+\lam_0(\delta)w)' +\lam_1(\delta)(w'+\lam_0(\delta)w) > 0 
	\quad \mbox{for} \  s \geq s_1,
$$
which implies that $(e^{\lam_1(\delta)s}(w'(s)+\lam_0(\delta)w(s)))' > 0$ for $s \geq s_1$. 
Thus we obtain 
$$
	w'(s)+\lam_0(\delta)w(s) \geq Ce^{-\lam_1(\delta)s}
	\quad \mbox{for} \  s \geq s_1 
$$
with some constant $C \in \R$, which implies that 
$$
	(e^{\lam_0(\delta)s}w(s))' \geq  Ce^{-(\lam_1(\delta)-\lam_0(\delta))s}
	\quad \mbox{for} \  s \geq s_1. 
$$
Integrating the above on $[s_1, s]$, we obtain 
$w(s) \geq C_1e^{-\lam_0(\delta)s} + C_2e^{-\lam_1(\delta)s}$ 
for $s \geq s_1$ with some constants $C_1 = e^{\lam_0(\delta)s_1}w(s_1) < 0$ and $C_2 \in \R$. 
Since $0 < \lam_0(\delta) < \lam_1(\delta)$ and $w(s) < 0$, we obtain (\ref{eq6.21}).
\end{proof}

Next we show the following two lemmas.

\begin{lemma}\label{lem6.5}
The function $w(s, \log \alpha)$, defined by $(6.14)$, satisfies  
\begin{equation}
	w(s, \log \alpha) = -\ell_N(\alpha)e^{-\lam_0 s} + o(e^{-\lam_0 s})
	\quad \mbox{as} \ s \to \infty 
	\label{eq6.23}
\end{equation}
if $N \geq 11$, and 
\begin{equation}
	w(s, \log \alpha) = -\ell_N(\alpha)se^{-\lam_0 s} + o(se^{-\lam_0 s})
	\quad \mbox{as} \ s \to \infty 
	\label{eq6.24}
\end{equation}
if $N \geq 10$, 
where $\lam_0$ is defined by $(\ref{eq1.6})$ and  $\ell_N(\alpha)$ is a nonnegative constant.
\end{lemma}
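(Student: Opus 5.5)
We linearize (\ref{eq6.17}) about $w=0$ and reduce it to the form (\ref{eq6.1}) by factoring out the expected exponential, so that Proposition \ref{prp6.1} applies. Write $h(w)=1+g(w)$ with $|g(w)|\le C|w|$ for $|w|\le 1$. Then (\ref{eq6.17}) reads
$$
	w''+(N-2)w'+2(N-2)w + 2(N-2)g(w(s))w = 0 .
$$
Set $c_*(s)=2(N-2)g(w(s))$. By Lemma \ref{lem6.4} we have $|c_*(s)|\le Ce^{-\lam_0(\delta)s}$ for large $s$, so $c_*$ decays exponentially. Thus $w$ solves the linear equation $w''+(N-2)w'+(2(N-2)+c_*(s))w=0$ with an exponentially small perturbation of the constant coefficient. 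The roots of $\lam^2-(N-2)\lam+2(N-2)=0$ are $\lam_0$ and $\lam_1=N-2-\lam_0$; they are distinct when $N\ge 11$ and coincide ($\lam_0=\lam_1=4$) when $N=10$.

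Next substitute $w(s)=e^{-\lam_0 s}z(s)$. A direct computation gives
$$
	z''+(N-2-2\lam_0)z' + c_*(s)z=0,
$$
which is (\ref{eq6.1}) with $C_0=N-2-2\lam_0=\sqrt{(N-2)(N-10)}$ and $c=c_*$ satisfying (\ref{eq6.2}).

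For $N\ge 11$ we have $C_0>0$, and Proposition \ref{prp6.1}(i) gives $z(s)\to\ell_0$. This is exactly (\ref{eq6.23}) with $\ell_N(\alpha)=-\ell_0$. Since $w<0$ by (\ref{eq6.16}), $z<0$, hence $\ell_N(\alpha)=-\ell_0\ge 0$.

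For $N=10$ we have $C_0=0$, and Proposition \ref{prp6.1}(ii) gives $z(s)/s\to\ell_0$. This is (\ref{eq6.24}) with $\ell_N(\alpha)=-\ell_0\ge 0$, again using $z<0$.

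The main obstacle is checking that $c_*$ really satisfies (\ref{eq6.2}). This needs only an exponential decay rate for $w$, with no sharpness required, and Lemma \ref{lem6.4} with any fixed $\delta\in(0,1)$ provides it, since $\lam_0(\delta)>0$. The lemma only asserts $\ell_N(\alpha)\ge 0$; positivity, and the further information about the subleading behavior when $\ell_0=0$, are left to the subsequent lemmas.
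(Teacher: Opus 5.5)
Your proposal is correct and is essentially the paper's proof. The substitution $z=e^{\lambda_0 s}w$ turns (\ref{eq6.17}) into (\ref{eq6.26}) with $C_0=\sqrt{(N-2)(N-10)}$, and Lemma \ref{lem6.4} gives the exponential decay (\ref{eq6.2}) of the coefficient $2(N-2)(h(w)-1)$; Proposition \ref{prp6.1}, combined with $w<0$, then yields both expansions with $\ell_N(\alpha)\ge 0$. The only cosmetic difference is that you bound $h(w)-1$ by the Taylor estimate $|h(w)-1|\le C|w|$, whereas the paper uses the one-sided inequality $0\ge h(w)-1\ge w/2$ for $w\le 0$.
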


From (\ref{eq6.16}) we have $\ell_N(\alpha_1) \geq \ell_N(\alpha_2)$ 
if $0 < \alpha_1 < \alpha_2$ in Lemma \ref{lem6.5}.  
If $\ell_N(\alpha_1) = \ell_N(\alpha_2)$ we obtain the following.

\begin{lemma}\label{lem6.6}
Let $N \geq 10$, and let $\ell_N(\alpha)$ be the constant in Lemma $\ref{lem6.5}$. 
Assume that $\ell_N(\alpha_1) = \ell_N(\alpha_2)$ with some $0 < \alpha_1 < \alpha_2$.
Define $\eta(s) = w(s, \log \alpha_2) - w(s, \log \alpha_1)$.  
Then $\eta$ satisfies
\begin{equation}
	\int^{\infty}\frac{ds}{e^{(N-2)s}\eta(s)^2} = \infty.
	\label{eq6.25}
\end{equation}
\end{lemma}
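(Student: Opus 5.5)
Write $w_i(s)=w(s,\log\alpha_i)$ for $i=1,2$. Subtracting the two copies of (\ref{eq6.15}), the difference $\eta=w_2-w_1$ satisfies the linear equation
$$
	\eta'' + (N-2)\eta' + 2(N-2)k(s)\eta = 0, \qquad
	k(s) = \int_0^1 e^{\theta w_2(s)+(1-\theta)w_1(s)}\,d\theta .
$$
By (\ref{eq6.16}) we have $\eta>0$ on $\R$. The plan is to bring this equation into the form (\ref{eq6.1}) and then apply Proposition \ref{prp6.1}, using the hypothesis $\ell_N(\alpha_1)=\ell_N(\alpha_2)$ to select the decaying branch of solutions.

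\textbf{Step 1: reduction to (\ref{eq6.1}).} Since $\lam_0,\lam_1$ are the roots of $\lam^2-(N-2)\lam+2(N-2)=0$, I substitute $\eta(s)=e^{-\lam_0 s}z(s)$. This gives
$$
	z'' + (N-2-2\lam_0)z' + 2(N-2)(k(s)-1)z = 0,
$$
which is (\ref{eq6.1}) with $C_0=N-2-2\lam_0=\sqrt{(N-2)(N-10)}$ and $c(s)=2(N-2)(k(s)-1)$. Lemma \ref{lem6.4} gives $|w_i(s)|=O(e^{-\lam_0(\delta)s})$, hence $|k(s)-1|\le C(|w_1|+|w_2|)\le Ce^{-\lam_0(\delta)s}$. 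This is exactly (\ref{eq6.2}) with $\delta'=\lam_0(\delta)>0$.

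\textbf{Step 2: the case $N\ge 11$.} Here $C_0>0$, so Proposition \ref{prp6.1}(i) applies: $z(s)\to\ell_0$. By Lemma \ref{lem6.5} and $\ell_N(\alpha_1)=\ell_N(\alpha_2)$,
$$
	e^{\lam_0 s}\eta(s) = e^{\lam_0 s}\big(w_2(s)-w_1(s)\big) = o(1),
$$
so $\ell_0=0$. The second part of Proposition \ref{prp6.1}(i) then gives $p_0(s)z(s)=e^{C_0 s}z(s)\to\ell_1$. Therefore
$$
	e^{(N-2)s}\eta(s)^2 = e^{(N-2-2\lam_0)s}z(s)^2 = p_0(s)z(s)^2 = \frac{(p_0z)^2}{p_0}\to 0,
$$
and $\int^\infty ds/(e^{(N-2)s}\eta^2)=\infty$, which is (\ref{eq6.25}).

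\textbf{Step 3: the case $N=10$.} Here $C_0=0$, so Proposition \ref{prp6.1}(ii) applies: $z(s)/s\to\ell_0$. From (\ref{eq6.24}) and equality of the constants, $z(s)=e^{\lam_0 s}\eta(s)=o(s)$, so $\ell_0=0$. Then $z(s)\to\ell_1$, and since $N-2=2\lam_0$,
$$
	e^{(N-2)s}\eta(s)^2 = z(s)^2 \to \ell_1^2 ,
$$
which is bounded, so the integral in (\ref{eq6.25}) again diverges. (If $\ell_1=0$ the integrand tends to $+\infty$, which only helps.)

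\textbf{Expected obstacle.} Everything above is formal except the decay bound on $c(s)$ in Step 1, which rests on Lemma \ref{lem6.4}. One must also make sure $k$ is continuous, which is clear. For $N\ge 11$ we may need the $\delta$ of Lemma \ref{lem6.2} to be smaller than $C_0$, but that lemma already arranges this internally. The key conceptual point is that equality of the $\ell_N$ forces the leading coefficient $\ell_0$ in Proposition \ref{prp6.1} to vanish.
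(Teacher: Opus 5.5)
Your proposal is correct and follows essentially the same route as the paper. The common steps are: the linearized equation for $\eta$; the substitution $z=e^{\lam_0 s}\eta$, which reduces it to (\ref{eq6.1}) with $C_0=\sqrt{(N-2)(N-10)}$; the hypothesis $\ell_N(\alpha_1)=\ell_N(\alpha_2)$ forcing the leading coefficient $\ell_0$ in Proposition \ref{prp6.1} to vanish; and the identity $e^{(N-2)s}\eta^2=p_0z^2$. The differences are cosmetic. You use the integral average $k(s)$ instead of a mean-value point $\theta(s)$, and you bound $c(s)$ via Lemma \ref{lem6.4} rather than Lemma \ref{lem6.5}. Both choices are valid.
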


\begin{proof}[Proof of Lemma \ref{lem6.5}] 
Recall that $w(s) = w(s, \log \alpha) < 0$ for $s \in \R$ and satisfies (\ref{eq6.17}). 
Note that $\lam_0$, defined by (\ref{eq1.6}), is a root of a polynomial
$$
	\lam^2 -(N-2)\lam + 2(N-2) = 0.
$$
By the change of variable $z(s) = e^{\lam_0 s}w(s)$, we have
\begin{equation}
	z'' + C_0z' + 2(N-2)(h(w)-1)z = 0 \quad \mbox{for} \  -\infty < s < \infty,
	\label{eq6.26}
\end{equation}
where $C_0 = \sqrt{(N-2)(N-10)}$. 
We see that 
$$
	0 \geq h(w)-1 = \frac{e^w-1-w}{w} \geq \frac{1}{2}w 
	\quad \mbox{for} \ w \leq 0.
$$
By Lemma \ref{lem6.4} we have $|h(w(s))-1| = O(e^{-\lam_0(\delta) s})$ as $s \to \infty$. 

Let $N \geq 11$. Then $C_0 > 0$. 
Applying Proposition \ref{prp6.1} for the solution $z$ of (\ref{eq6.26}), we have 
$$
	z(s) = \ell + o(1) \quad \mbox{as} \ s \to \infty
$$
with some $\ell \in \R$. 
Since $z(s) < 0$ for $s \in \R$, we have $\ell \leq 0$. 
Then we obtain (\ref{eq6.23}) with $\ell_N(\alpha) = -\ell \geq 0$.     

Let $N = 10$. Then $C_0 = 0$. 
By Proposition \ref{prp6.1} we have 
$$
	z(s) = \ell s + o(s) \quad \mbox{as} \ s \to \infty
$$
with some $\ell \in \R$. 
Since $z(s) < 0$ for $s \in \R$, we have $\ell \leq 0$. 
Then we obtain (\ref{eq6.24}) with $\ell_N(\alpha) = -\ell \geq 0$.   
\end{proof}

\begin{proof}[Proof of Lemma \ref{lem6.6}] 
Recall that $w(s, \log \alpha)$ satisfies (\ref{eq6.15}). 
Then $\eta(s)$ satisfies 
$$
	\eta'' + (N-2)\eta' + 2(N-2)e^{\theta(s)}\eta = 0 
	\quad \mbox{for} \  -\infty < s < \infty,
$$
where 
\begin{equation}
	w(s, \log \alpha_1) < \theta(s) < w(s, \log \alpha_2) < 0.
	\label{eq6.27}
\end{equation}
Since $\ell_N(\alpha_1) = \ell_N(\alpha_2)$,  by Lemma 6.5 we have
\begin{equation}
	\eta(s) = \left\{
	\begin{array}{ll} 
	o(e^{-\lam_0 s}) \quad & \mbox{if} \ N \geq 11,
	\\[1ex]
	o(se^{-\lam_0 s}) \quad & \mbox{if} \ N = 10.
	\end{array}
	\right.
	\label{eq6.28}
\end{equation}
By the change of the variable $z(s) = e^{\lam_0 s}\eta(s)$, we have
\begin{equation}
	z'' + C_0z' + 2(N-2)(e^{\theta(s)}-1)z = 0 
	\quad \mbox{for} \  -\infty < s < \infty,
	\label{eq6.29}
\end{equation}
where $C_0 = \sqrt{(N-2)(N-10)}$. 
We note that 
$x \leq e^{x}-1 \leq 0$ for $x \leq 0$.
Then, by Lemma~\ref{lem6.5} and (\ref{eq6.27}), we have, as $s \to \infty$,  
$$
	|e^{\theta(s)}-1| \leq |\theta(s)| = 
	\left\{
	\begin{array}{ll}
	O(e^{-\lam_0 s}) \quad & {if} \ N \geq 11, 
	\\[1ex]
	O(s e^{-\lam_0 s}) \quad & {if} \ N = 10. 
	\end{array}
	\right.
$$
Let $p_0(s) = e^{C_0s} = e^{\sqrt{(N-2)(N-10)} s}$. 
Then (\ref{eq6.29}) can be written as (\ref{eq6.3}) with $c(s) = 2(N-2)(e^{\theta(s)}-1)$, 
and the function $c(s)$ 
satisfies $|c(s)| = O(e^{-\delta s})$ as $s \to \infty$ with $0 < \delta < \lam_0$. 

Let $N \geq 11$. From (\ref{eq6.28}) we have $z(s) \to 0$ as $s \to \infty$. 
By Proposition \ref{prp6.1} we have 
$$
	\int^{\infty}\frac{ds}{p_0(s)z(s)^2} = \infty,
$$
which implies (\ref{eq6.25}).
Let $N = 10$. From (\ref{eq6.28}) we have $z(s)/s \to 0$ as $s \to \infty$. 
By Proposition~\ref{prp6.1} we have 
$$
	\int^{\infty}\frac{ds}{z(s)^2} = \infty.
$$
Since $\lam_0 = (N-2)/2$ if $N = 10$, we obtain (\ref{eq6.25}). 
\end{proof}

\begin{proof}[Proof of Proposition \ref{prp3.1}] 
Define $w(s, \log \alpha)$ by (\ref{eq6.14}). 
By Lemma \ref{lem6.5} we obtain (\ref{eq6.23}) and (\ref{eq6.24}) 
when $N \geq 11$ and $N = 10$, respectively.
Then we obtain (\ref{eq3.6}) if $N \geq 11$ and (\ref{eq3.7}) if $N = 10$. 
From (\ref{eq6.16}) we have $\ell_N(\alpha_1) \geq \ell_N(\alpha_2)$ if $0 < \alpha_1 < \alpha_2$.
We will show that 
\begin{equation}
	\ell_N(\alpha_1) > \ell_N(\alpha_2) \quad \mbox{for any} \ 0 < \alpha_1 < \alpha_2.
	\label{eq6.30}
\end{equation}
Assume by contradiction that there exists $0 <  \alpha_1 < \alpha_2$ such that 
$\ell_N(\alpha_1) = \ell_N(\alpha_2)$. 
Take any $\alpha_3 > \alpha_2$, and define 
$$
	u(r) = \phi(r, \log \alpha_2) - \phi(r, \log \alpha_1)
	\quad \mbox{and} \quad  
	v(r) = \phi(r, \log \alpha_3) - \phi(r, \log \alpha_2)
$$
for $r \geq 0$.
Then $u(r)$ is positive on $[0, \infty)$ and, by the mean value theorem, $u$ satisfies
$$
	u'' + \frac{N-1}{r}u' + e^{\theta(r)}u = 0 \quad \mbox{for} \ r > 0,
	\quad u(0) > 0 \quad \mbox{and} \quad u'(0) = 0,
$$
where $\theta$ satisfies $\phi(r, \log \alpha_1) < \theta(r) < \phi(r, \log \alpha_2)$ 
for $r \geq 0$.   
Similarly, $v(r)$ is positive on $[0, \infty)$ and satisfies 
$$
	v'' + \frac{N-1}{r}v' + e^{\Theta(r)}v = 0 \quad \mbox{for} \ r > 0,
	\quad v(0) > 0 \quad \mbox{and} \quad v'(0) = 0,
$$
where $\Theta$ satisfies $\phi(r, \log \alpha_2) < \Theta(r) < \phi(r, \log \alpha_3)$ 
for $r \geq 0$.   
Then we have $e^{\theta(r)} < e^{\Theta(r)}$ for $r \geq 0$. 

Define $\eta(s) = w(s, \log \alpha_2) - w(s, \log \alpha_1)$.
Then $\eta(s) = u(r)$ with $s = \log r$. 
By Lemma~\ref{lem6.6} we have (\ref{eq6.25}), which implies that 
$$
	\int^{\infty}\frac{dr}{r^{N-1}u(r)^2} = \infty.
$$
Applying Proposition \ref{prp6.3} with $p(r) = r^{N-1}$, $q(r) = e^{\theta(r)}$ and 
$Q(r) = e^{\Theta(r)}$, we see that 
$v(r)$ has at least one zero in $(0, \infty)$. 
This is a contradiction. 
Thus we obtain (\ref{eq6.30}).

By Lemma \ref{lem6.5} we have $\ell_N(\alpha) \geq 0$ 
for all $\alpha > 0$ in (\ref{eq6.23}) and (\ref{eq6.24}). 
We will show that $\ell_N(\alpha) > 0$ for all $\alpha > 0$. 
Assume by contradiction that $\ell_N(\alpha_0) = 0$ with some $\alpha_0 > 0$. 
Then, from (\ref{eq6.30}) we obtain $\ell_N(\alpha) < 0$ if $\alpha > \alpha_0$, which is a contradiction. 
Thus we obtain $\ell_N(\alpha) > 0$ for all $\alpha > 0$.

In the case $N \geq 11$, from (\ref{eq3.6}) with $\alpha = 1$, we have 
$$
	\phi(r, 0) = \phi_{\infty}(r) -\ell_N(1)r^{-\lam_0} + o(r^{-\lam_0}) 
	\quad \mbox{as} \ r \to \infty.
$$
Using (\ref{eq3.1}) and the property 
that $\phi_{\infty}(\sqrt{\alpha}r) + \log \alpha = \phi_{\infty}(r)$, we have 
$$
	\begin{array}{rcl}
	\phi(r, \log \alpha) & = & \phi(\sqrt{\alpha}r, 0) + \log \alpha 
	\\[2ex] 
	& = & 	 
	\phi_{\infty}(\sqrt{\alpha}r) + \log \alpha -\ell_N(1)(\sqrt{\alpha}r)^{-\lam_0} 
	+ o(\sqrt{\alpha}r^{-\lam_0})
	\\[2ex]
	& = &  \phi_{\infty}(r) -\ell_N(1)\alpha^{-\lam_0/2}r^{-\lam_0} + o(\lam^{-\lam_0})
	\quad \mbox{as} \ r \to \infty.
	\end{array}
$$
Thus we obtain (\ref{eq3.8}).
In the case $N = 10$, by the same argument we obtain 
$$
	\phi(r, \log \alpha) = 
	\phi_{\infty}(r) - \ell_N(1)\alpha^{-\lam_0/2}r^{-\lam_0}\log r 
	+o(r^{-\lam_0}\log r)
	\quad \mbox{as} \ r \to \infty,
$$
and hence (\ref{eq3.8}) holds.
\end{proof}


\section{Proof of Lemmas \ref{lem3.3} and \ref{lem3.4}}

First we give the proof of Lemma \ref{lem3.3}.

\begin{proof}[Proof of Lemma \ref{lem3.3}] 
Take $\bar{\alpha} > 0$ such that $\log \bar{\alpha} > \phi_0(0)$.
Since $\phi(0, \log \bar{\alpha}) = \log \bar{\alpha} > \phi_0(0)$, 
there exists $r_2 > 0$ satisfying 
\begin{equation}
	\phi(r, \log \bar{\alpha}) > \phi_0(r) \quad \mbox{for} \ 0  \leq r \leq r_2.
	\label{eq7.1}
\end{equation}
Recall that $\phi(r, \log \alpha)$ is increasing in $\alpha > 0$ and (\ref{eq3.4}) holds.
Then there exists $\hat{\alpha} \geq \max\{\bar{\alpha}, \alpha_1\}$ such that 
\begin{equation}
	\phi(r, \log\hat{\alpha}) > \phi_0(r) \quad \mbox{for} \ r_2 \leq r \leq r_1,
	\label{eq7.2}
\end{equation}
where $\alpha_1$ and $r_1$ are constants in (\ref{eq3.13}). 
Combining (\ref{eq3.13}), (\ref{eq7.1}) and (\ref{eq7.2}), 
we obtain $\phi(r, \log\hat{\alpha}) \geq \phi_0(r)$ for $r \geq 0$.
\end{proof}

In the proof of Lemma \ref{lem3.4}, for simplicity 
we denote by $\phi(r, \log \alpha) = \phi_{\alpha}(r)$. 
Define 
\begin{equation}
	\psi(r) = r^{-(N-2)/2} \quad \mbox{for} \ r > 0. 
	\label{eq7.3}
\end{equation}
By a direct calculation, $\psi(|x|)$ with $x \in \R^N$ satisfies
$$
	\Delta \psi + \frac{(N-2)^2}{4|x|^2}\psi = 0 
	\quad \mbox{for} \ x \in \R^N\setminus\{0\}.
$$
Since $e^{\phi_{\infty}(|x|)} = \frac{2N-4}{|x|^2}$ and 
$2N-4 \leq \frac{(N-2)^2}{4}$ if $N \geq 10$, we have 
\begin{equation}
	\Delta \psi + e^{\phi_{\infty}(|x|)}\psi \leq 0
	\quad \mbox{for} \ x \in \R^N\setminus\{0\}.
	\label{eq7.4}
\end{equation}
We show the following lemma.

\begin{lemma}\label{lem7.1}
Let $N \geq 10$.  
For a constant $C > 0$, define
$w(r) = \phi_{\alpha}(r) -C\psi(r)$ for $r > 0$, 
where $\psi$ is defined by $(\ref{eq7.3})$. 
Then $w(|x|)$ with $x \in \R^N$ satisfies
$$
	\Delta w + e^w > 0 \quad \mbox{for} \ |x| > 0.
$$
\end{lemma}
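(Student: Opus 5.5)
Since $\phi_\alpha$ solves $\Delta\phi_\alpha + e^{\phi_\alpha}=0$ in $\R^N$, we have $\Delta w = -e^{\phi_\alpha} - C\Delta\psi$. Hence
$$
	\Delta w + e^{w} = -e^{\phi_\alpha} - C\Delta\psi + e^{\phi_\alpha}e^{-C\psi}
	= -C\Delta\psi - e^{\phi_\alpha}\bigl(1-e^{-C\psi}\bigr)
	\quad \mbox{for} \ |x|>0 .
$$
The plan is to bound the last term from above using convexity, and then use (\ref{eq7.4}) together with the ordering (\ref{eq3.5}).

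\emph{Step 1: strict convexity.} Because $C\psi(r)>0$ for $r>0$, the elementary inequality $1-e^{-y}<y$ for $y>0$ gives
$$
	e^{\phi_\alpha}\bigl(1-e^{-C\psi}\bigr) < C\psi\, e^{\phi_\alpha}.
$$
Therefore
$$
	\Delta w + e^{w} > -C\bigl(\Delta\psi + e^{\phi_\alpha}\psi\bigr).
$$

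\emph{Step 2: comparison with the singular solution.} By (\ref{eq3.5}), which is valid for $N\ge 10$, we have $\phi_\alpha(r)<\phi_\infty(r)$ for all $r>0$. Since $\psi>0$, this gives
$$
	\Delta\psi + e^{\phi_\alpha}\psi \le \Delta\psi + e^{\phi_\infty}\psi \le 0,
$$
where the last inequality is (\ref{eq7.4}). It is exactly here that $N\ge 10$ enters, through $2N-4\le (N-2)^2/4$.

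\emph{Conclusion and main point.} Combining the two steps yields $\Delta w + e^w > -C(\Delta\psi+e^{\phi_\alpha}\psi)\ge 0$ for $|x|>0$. The only delicate issue is strictness: in case $N=10$ equality holds in (\ref{eq7.4}), so strictness cannot come from Step 2. It does come from Step 1, since the inequality $1-e^{-y}<y$ is strict for every $y>0$, and this holds pointwise at each $r>0$. No further obstacle is expected; everything is a pointwise computation for $|x|>0$, where $\psi$ is smooth.
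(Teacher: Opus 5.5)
Your proof is correct and is essentially the paper's argument. The paper starts from (\ref{eq7.4}) and then uses the mean value theorem to get $e^{\phi_\alpha}-e^{w}<C\psi e^{\phi_\alpha}<C\psi e^{\phi_\infty}$; your Steps 1 and 2 are the same two estimates taken in the opposite order.

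One small correction: you say strictness "cannot come from Step 2," but the first inequality there is also strict, since $\phi_\alpha<\phi_\infty$ and $\psi>0$ (the paper uses both strict inequalities). This does not affect your conclusion.
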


\begin{proof}
From (\ref{eq7.4}) we have 
\begin{equation}
	\Delta w + e^w = \Delta \phi_{\alpha} - C\Delta \psi + e^w
	\geq - e^{\phi_{\alpha}} + Ce^{\phi_{\infty}}\psi + e^w.
	\label{eq7.5}
\end{equation}
By the mean value theorem and $\phi_{\infty}(r) > \phi_{\alpha}(r)$ for $r > 0$, 
we obtain 
\begin{equation}
	e^{\phi_{\alpha}(r)} -e^{w(r)} = 
	e^{\phi_{\alpha}(r)} -e^{\phi_{\alpha}(r) - C\psi(r)} 
	< e^{\phi_{\alpha}(r)}C\psi(r) < Ce^{\phi_{\infty}(r)}\psi(r).
	\label{eq7.6}
\end{equation}
From (\ref{eq7.5}) and (\ref{eq7.6}) we obtain $\Delta w + e^w > 0$ for $|x| > 0$.
\end{proof}

\begin{proof}[Proof of Lemma \ref{lem3.4}] 
Let $\psi$ be defined by (\ref{eq7.3}). 
Observe that 
$$
	\frac{\phi_{\alpha}(r) -\phi_0(r)}{\psi(r)} \to 0 
	\quad \mbox{as} \ r \to 0.
$$
From (\ref{eq3.14}) we have 
$$
	\frac{\phi_{\alpha}(r) -\phi_0(r)}{\psi(r)} \leq 0 
	\quad \mbox{for} \ r \geq r_1
	\quad \mbox{and} \quad 
	\sup_{r > 0}\frac{\phi_{\alpha}(r) -\phi_0(r)}{\psi(r)} > 0. 
$$
Then there exists a constant $C_0 > 0$ such that  
$$
	\sup_{r > 0}\frac{\phi_{\alpha}(r) -\phi_0(r)}{\psi(r)} = C_0.
$$
Define $w(r) = \phi_{\alpha}(r)-C_0\psi(r)$ for $r > 0$. 
Then 
\begin{equation}
	w(r) \leq \phi_0(r) \quad \mbox{for} \ r > 0.
	\label{eq7.7}
\end{equation}
Since $w(r) \to -\infty$ as $r \to 0$ and $r \to \infty$, 
there exists $r_2 > 0$ such that 
$w(r_2) = \max_{r > 0}w(r)$. 
Define
\begin{equation}
	\underline{\phi}_0(r) = \left\{
	\begin{array}{ll}
	w(r) \quad &\mbox{if} \ r \geq r_2,
	\\[1ex]
	w(r_2) \quad &\mbox{if} \ 0 \leq r < r_2.
	\end{array}
	\right.
	\label{eq7.8}
\end{equation}
Then $\underline{\phi}_0 \in C^1[0, \infty)$ and 
\begin{equation}
	\underline{\phi}_0(r) \geq w(r) \quad \mbox{for} \ r \geq 0.
	\label{eq7.9}
\end{equation}
Since $\phi_0(r)$ is nonincreasing, we have 
$$
	\phi_0(r) \geq \phi_0(r_2) \geq w(r_2) = \underline{\phi}_0(r)
	\quad \mbox{for} \ 0 \leq r \leq r_2.
$$
From (\ref{eq7.7}) and (\ref{eq7.8}) we obtain 
$\underline{\phi}_0(r) \leq \phi_0(r)$ for $r \geq r_2$, and hence (\ref{eq3.16}) holds. 
By the definition of $\underline{\phi}_0$ in (\ref{eq7.8}), 
it is clear that (\ref{eq3.15}) holds. 
Note that $\lam_0 < (N-2)/2$ if $N \geq 11$ and $\lam_0 = (N-2)/2$ if $N = 10$.
Then we obtain, as $r \to \infty$, 
$$
	\psi(r) = \left\{
	\begin{array}{ll}
	o(r^{-\lam_0}) \quad & \mbox{if} \ N \geq 11,
	\\[1ex]
	o(r^{-\lam_0}\log r) \quad & \mbox{if} \ N = 10.
	\end{array}
	\right.
$$
Thus we obtain (\ref{eq3.18}) and (\ref{eq3.19}). 

We will show that (\ref{eq3.17}) holds.
It is clear that $r^{N-1}\underline{\phi}_0'(r) \equiv 0$ for $0 \leq r \leq r_2$. 
Then we obtain 
\begin{equation}
	r^{N-1}\underline{\phi}_0'(r) 
	+ \int^r_0 s^{N-1}e^{\underline{\phi}_0(s)}ds > 0
	\quad \mbox{for} \ 0 < r \leq r_2.
	\label{eq7.10}
\end{equation}
By Lemma \ref{lem7.1}, $w(r)$ satisfies 
\begin{equation}
	(r^{N-1}w'(r)) + r^{N-1}e^{w(r)} > 0 \quad  \mbox{for} \ r > 0.
	\label{eq7.11}
\end{equation}
Note that $r^{N-1}w'(r) \to 0$ as $r \to 0$. 
Then, integrating the both sides of (\ref{eq7.11}) on $[\rho, r]$ with $\rho > 0$, 
and letting $\rho \to 0$, we obtain 
$$
	r^{N-1}w'(r) + \int^r_0 s^{N-1}e^{w(s)}ds > 0. 
$$
From (\ref{eq7.9}) we obtain 
$$
	r^{N-1}\underline{\phi}_0'(r) 
	+ \int^r_0 s^{N-1}e^{\underline{\phi}_0(s)}ds > 0
	\quad \mbox{for} \ r > r_2.
$$
From (\ref{eq7.10}) we obtain (\ref{eq3.17}).
\end{proof}


\bigskip

\noindent
{\bf Acknowledgement. } 

The first author was supported by JSPS KAKENHI Grant Number JP26K06881 
and the second author was supported by 
JSPS KAKENHI Grant Number JP23K03190. 
This work was also supported by Research Institute for Mathematical Sciences, a Joint
Usage/Research Center located in Kyoto University.
\bigskip



\end{document}